\newtheorem{thm}{Theorem}
\newtheorem*{propo}{Proposition}
\newtheorem{cor}{Corollary}
\newtheorem{lemma}{Lemma}
\newenvironment{defin}{\medskip\noindent{\sc
Definition}. }{\goodbreak\medskip}
\newenvironment{nota}{\medskip\noindent{\sc
Notations}. }{\goodbreak\medskip}
\newenvironment{remk}{\noindent{\sc
Remarks}. }{\goodbreak\vskip10pt}
\newtheorem{prop}{Proposition}
\def\demo{\medskip\goodbreak\noindent
     \hbox{\sc Proof \kern .3em}\ignorespaces}%
  \def \qedbox{$\square$}%
  \def \qed{\hglue1mm\hfill{\ifmmode\qedbox
     \else\unskip\ \hglue0mm\hfill\qedbox\medskip
      \goodbreak\fi}}%
\def\enddemo{\qed\goodbreak\vskip10pt}%
\def\qed{\hglue1mm\hfill\raise -2pt\hbox{\vrule\vbox to 10pt{\hrule width
4pt
                  \vfill\hrule}\vrule}}
\newcommand{\T}{\mathbb {T}}
\newcommand{\U}{\mathbb {U}}
\newcommand{\A}{\mathbb {A}}
\newcommand{\esse}{\mathbb {S}}
\newcommand{\R}{\mathbb {R}}
\newcommand{\Z}{\mathbb {Z}}
\newcommand{\N}{\mathbb {N}}
\newcommand{\Nc}{\mathcal {N}}
\newcommand{\Vc}{\mathcal {V}}
\newcommand{\Pc}{\mathcal {P}}
\newcommand{\Hc}{\mathcal {H}}
\newcommand{\Ic}{\mathcal {I}}
\newcommand{\Mc}{\mathcal {M}}
\newcommand{\Lc}{\mathcal {L}}
\newcommand{\Fc}{\mathcal {F}}
\newcommand{\Ac}{\mathcal {A}}
\renewcommand{\Im}{\mathrm{Im}}
\begin{document}
\title{Lyapunov exponents of minimizing measures  for globally positive diffeomorphisms in all dimensions}
\author{M.-C. ARNAUD 
\thanks{ANR-12-BLAN-WKBHJ}
\thanks{Avignon Universit\'e , Laboratoire de Math\'ematiques d'Avignon (EA 2151),  F-84 018 Avignon,
France. e-mail: Marie-Claude.Arnaud@univ-avignon.fr} 
\thanks{membre de l'Institut universitaire de France}}
\maketitle
\abstract{\noindent \sl  The globally positive diffeomorphisms of the $2n$-dimensional annulus are important because they represent what happens close to a completely elliptic periodic point of a symplectic diffeomorphism where the torsion is positive definite.\\
For these globally positive diffeomorphisms, an Aubry-Mather theory was developed by Garibaldi \& Thieullen that provides the existence of some minimizing measures.\\
Using the two Green bundles $G_-$ and $G_+$  that can be defined along the support of these minimizing measures, we will prove that there is a deep link between:
\begin{enumerate}
\item[$\bullet$] the angle between $G_-$ and $G_+$ along the support of the considered measure $\mu$;
\item[$\bullet$] the size of the smallest positive Lyapunov exponent of $\mu$;
\item[$\bullet$] the tangent cone to the support of $\mu$.
\end{enumerate}

} 
\noindent{\em Key words: } discrete weak KAM theory, symplectic twist maps, Lyapunov exponents, Aubry-Mather theory, minimizing measures, Green bundles.\\
{\em 2010 Mathematics Subject Classification:}   37C40, 37D25,   37H15, 583E30, 58E35, 49M99, 49J52

   \section*{Introduction} 
At the end of the 19th century, motivated by the restricted 3-body problem, H.~Poincare introduced the study of the area preserving  diffeomorphisms near an elliptic fixed point. \\
Then, in the '30s, Birkhoff began the study  of the exact symplectic twist maps~: after a symplectic change of coordinates (action-angle), these maps represent what happens near an elliptic fixed point of a generic area preserving diffeomorphism (see \cite{Bir1}). \\
In the '80s, S.~Aubry \& P.~Le~Daeron and J.~Mather proved the existence of invariant minimizing measures for these twist maps (see \cite{ALD} and \cite{Mat1}). 
As proved by P.~Le~Calvez, these minimizing measures are in general hyperbolic (see \cite{LC1}).  For such minimizing measures, I proved in \cite{Arn1} that there is a link between the fact that they are hyperbolic and the regularity in some sense of their support and I proved in \cite{Arn4} that there is a link between the size of the Lyapunov exponents and the mean angle of the Oseledet's splitting when the minimizing measure is   hyperbolic. A fundamental tool to obtain such results is the pair of {\em Green bundles}, that are two bundles in lines that are defined along the support of the minimizing measures.

A natural question is then: what happens in higher dimension? \\
Let us explain what is a twist map in this setting (see for example  \cite{Gol1} or    \cite{Arn3}).  
\begin{nota}
The $2n$-dimensional annulus is $\A_n=\T^n\times \R^n$   endowed  with its usual symplectic form $\omega$. More precisely, if $q=(q_1, \dots, q_n)   \in\T^n$ and $p=(p_1, \dots , p_n)\in\R^n$ then $\displaystyle{\omega=dq\wedge dp=\sum_{i=1}^ndq_i\wedge dp_i}$.  .\\
       Let us recall that a diffeomorphism $f$ of $\A_n$ is {\em symplectic} if it preserves the symplectic form: $f^*\omega=\omega$. \\
   We denote by $\pi:\A_n\rightarrow \T^n$ the    projection $(q, p)\mapsto q$.\\
   At every $x=(q,p)\in \A_n$, we define the vertical subspace $V(x)=\ker D\pi(x)\subset T_x \A_n$ as being the tangent subspace at $x$ to the fiber $\{ q\}\times \R^n$.
\end{nota}

\begin{defin}
 A {\em globally positive} diffeomorphism of $\A_n$ is a symplectic $C^1$-diffeomorphism $f:\A_n\rightarrow \A_n$ that is homotopic to ${\rm Id}_{\A_n}$ and that has a lift  $F: \R^n\times \R^n\rightarrow \R^n\times \R^n$  that admits a $C^2$ generating function $S: \R^n\times \R^n\rightarrow \R$ such that:
 \begin{enumerate}
 \item[$\bullet$] there exists $\alpha>0$ such that: $\frac{\partial^2 S}{\partial q\partial Q}(q, Q)(v, v)\leq -\alpha\| v\|^2$;
 \item[$\bullet$] $F$ is implicitly given by:
 $$F(q,p)=(Q, P)\Longleftrightarrow \left\{\begin{matrix}p=-\frac{\partial S}{\partial q}(q, Q)\\ P=\frac{\partial S}{\partial Q}(q, Q)
 \end{matrix}\right.$$
 where $ \|.\|$ is the usual Euclidean norm in $\R^n$.
 \end{enumerate}
  \end{defin}
  When we use a symplectic change of basis  near a completely elliptic  periodic point  of a generic symplectic  diffeomorphism in any dimension, we obtain a Birkhoff normal form defined on a subset on  $\A_n$ by $(q, p)\mapsto (q+b.p+o(\| p\|), p+o(\|p\|)$ where the torsion $b$ is a symmetric non-degenerate matrix.  When $b$ is positive definite, this normal form is a a globally positive diffeomorphims on some bounded subannulus $\T^n\times [a, b]^n$ (see for example \cite{Mos2} or \cite{Arn6}).
  
    \begin{remk}
 If $f$,   $F$ satisfy the above hypotheses, the restriction to any fiber $\{q\}\times \R^n$ of $\pi\circ F$ and $\pi\circ F^{-1}$ are diffeomorphisms. 
 Moreover, for every $k\geq 2$, $q_0, q_k\in\R^n$, the function $\displaystyle{\hat\Fc:(\R^n)^{k-1}\rightarrow \R}$ defined by $\displaystyle{\hat\Fc(q_1, \dots, q_{k-1})=\Fc(q_0, \dots, q_k)=\sum_{j=1}^kS(q_{j-1}, q_j)}$ has a minimum, and at every critical point for $\hat\Fc$, the following sequence is a piece of orbit for $F$:
 $$(q_0, -\frac{\partial S}{\partial q}(q_0, q_1)), (q_1,  \frac{\partial S}{\partial Q}(q_0, q_1)), (q_2,    \frac{\partial S}{\partial Q}(q_1, q_2)), \dots , (q_k,  \frac{\partial S}{\partial Q}(q_{k-1}, q_k)).$$
 \end{remk}
 In the 2-dimensional case ($n=1$), J.~Mather and Aubry \& Le Daeron proved in \cite{ALD} and  \cite{Mat1} the existence of orbits $(q_i,p_i)_{i\in\Z}$ for $F$ that are {\em globally minimizing}. This means that for every $\ell\in\Z$ and every $k\geq 2$, $(q_{\ell+1}, \dots, q_{\ell+k-1})$ is minimizing the function $\hat\Fc$ defined by:
 $$\hat\Fc(q_{\ell+1}, \dots, q_{\ell+k-1})=\sum_{i=\ell+1}^{k} S(q_{i-1}, q_i).$$
 Then each of these orbits $(q_i, p_i)_{i\in\Z}$ is supported in the graph of a Lipschitz map defined on a closed subset of $\T$, and there exists a  bi-Lipschitz orientation preserving homeomorphisms $h:\T\rightarrow\T$ such that $(q_i)_{i\in\Z}=(h^i(q_0))_{i\in\Z}$. Hence each of these orbits has a {\em rotation number}.
 Moreover, for each rotation number $\rho$, there exists a minimizing orbit that has this rotation number and there even exist a {\em minimizing measure}, i.e. an invariant measure  the support of whose is filled by globally minimizing orbits, such that all the orbits contained in the support have the same rotation number $\rho$. These supports are sometimes called {\em Aubry-Mather sets}.\\
 
 For the globally positive diffeomorphisms in higher dimension,  a discrete weak KAM and an Aubry-Mather theories  were developped by E.~Garibaldi \& P.~Thieullen in \cite{GarThi}. They prove that  there exist some globally minimizing orbits and measures (the support of whose is compact and a Lipschitz graph) in $\A_n$ for all $n\geq 1$. \\
 Two Lagrangian subbundles of $T\A_n$ can be defined along the support of the minimizing measures of any   globally positive  diffeomorphism. They are called {\em Green bundles}, denoted by $G_-$ and $G_+$\footnote{Their definition is recalled in section \ref{sGreen}} and their existence is proved in \cite{BiaMac} and \cite{Arn4}.  We will prove that for any ergodic minimizing measure, the almost eveywhere dimension of the intersection of the two Green bundles gives the number of zero Lyapunov exponents of this measure:
 \begin{thm}\label{nbexp}
 Let $\mu$ be an ergodic  minimizing measure of a globally positive  diffeomorphism of $\A_n$. Let $p$ be the almost everywhere dimension of the intersection $G_-\cap G_+$ of the two Green bundles. Then $\mu$ has exactly $2p$ zero Lyapunov exponents, $n-p$ positive Lyapunov exponents and $n-p$ negative Lyapunov exponents.
 \end{thm}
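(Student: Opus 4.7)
The plan is to match the Green-bundle decomposition to the Oseledets splitting of $\mu$. I first apply the Oseledets theorem to obtain, at $\mu$-a.e.~$x$, a splitting $T_x\A_n = E^s(x)\oplus E^0(x)\oplus E^u(x)$ into stable, neutral and unstable subspaces. Because $f$ is symplectic, the Lyapunov spectrum is $\pm$-symmetric, $E^s$ and $E^u$ are isotropic with $\dim E^s = \dim E^u =: k$, and $E^0 = (E^s\oplus E^u)^\omega$ inherits a non-degenerate restriction of $\omega$. The theorem reduces to showing $k = n-p$ and $\dim E^0 = 2p$.

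The heart of the argument is a two-part dictionary between Oseledets and Green bundles, to be derived from the definitions recalled in section~\ref{sGreen} together with the monotone-cone estimates provided by the generating function~$S$. The first part asserts $E^s(x)\subseteq G_-(x)$ and $E^u(x)\subseteq G_+(x)$, with trivial transverse intersections $G_-\cap E^u=G_+\cap E^s=\{0\}$. This rests on the fact that $G_\pm$ arise as limits $\lim Df^{\pm n}V(f^{\mp n}x)$: an $E^s$-vector, whose norm decays exponentially under forward iteration, is captured by the limit defining $G_-$, while the vanishing of the transverse intersections follows from the uniform quadratic control on the slopes of iterated Lagrangian images furnished by the constant $\alpha$. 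The second part asserts that every vector $v\in G_-(x)\cap G_+(x)$ has zero Lyapunov exponent in both time directions. The tool here is the symmetric bilinear form $B_+-B_-$ measuring the ``angle'' between $G_+$ and $G_-$ in a suitable chart: it is non-negative, vanishes exactly on $G_-\cap G_+$, and Birkhoff's ergodic theorem applied to its iterates forces any nonzero Lyapunov exponent along a direction lying in both Green bundles to contradict the invariance of $G_-\cap G_+$ and the quadratic positivity of~$S$.

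Once this dictionary is in hand, the proof ends with a dimension count. Since $G_-$ is Lagrangian of dimension $n$, contains $E^s$, and meets $E^u$ only at zero, one obtains $G_- = E^s\oplus(G_-\cap E^0)$, so $\dim(G_-\cap E^0)=n-k$; the same argument yields $\dim(G_+\cap E^0)=n-k$. Conversely, the second half of the dictionary combined with the Oseledets characterization of $E^0$ as the set of vectors with vanishing exponent forces $G_-\cap E^0\subseteq G_-\cap G_+$ and $G_+\cap E^0\subseteq G_-\cap G_+$; hence the three subspaces $G_-\cap E^0$, $G_+\cap E^0$ and $G_-\cap G_+$ all coincide and have dimension $p$. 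Therefore $k=n-p$, and $\mu$ has exactly $n-p$ positive, $n-p$ negative, and $2p$ zero Lyapunov exponents.

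The main obstacle will be the second half of the dictionary. The inclusion ``zero exponent implies membership in $G_-\cap G_+$'' should come cheaply from the monotone-limit definition, since a sub-exponentially growing orbit of the vertical is automatically trapped by both limits. The converse is the quantitative statement: if some $v\in G_-\cap G_+$ had exponent $\lambda>0$, then along a Birkhoff-generic orbit $\|Df^n v\|$ would grow exponentially while remaining pinched between two Lagrangian planes whose angle integrates to zero. Turning this tension into a contradiction will require a careful iterated Riccati-type inequality driven by the constant $\alpha$ of global positivity, and this is where the heart of the work lies.
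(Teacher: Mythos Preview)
Your dimension count in the third paragraph is correct up to the point where you claim that ``the second half of the dictionary \dots\ forces $G_-\cap E^0\subseteq G_-\cap G_+$.'' This implication is a non-sequitur: the second half, as you stated it, is the inclusion $G_-\cap G_+\subseteq E^0$, and that says nothing about the reverse containment. What you actually need is $G_-\cap E^0\subseteq G_+$ (and symmetrically $G_+\cap E^0\subseteq G_-$), i.e.\ that a vector lying in one Green bundle and having zero Lyapunov exponent must lie in the other Green bundle. This is precisely the substantive content of the theorem, and your proposal does not supply it.

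You also have the easy and hard directions reversed. The inclusion $G_-\cap G_+\subseteq E^0$ is cheap: from the dynamical criterion one gets $E^s\subseteq G_-$ and $E^u\subseteq G_+$; taking symplectic orthogonals of Lagrangian subspaces gives $G_-\subseteq E^s\oplus E^0$ and $G_+\subseteq E^u\oplus E^0$, whence $G_-\cap G_+\subseteq E^0$ immediately. No Riccati inequality is needed here. By contrast, the assertion ``zero exponent implies membership in $G_-\cap G_+$'' does \emph{not} come cheaply from the monotone-limit definition: the dynamical criterion captures vectors whose projected iterates are \emph{bounded}, not merely sub-exponential, and in any case the literal statement $E^0\subseteq G_-\cap G_+$ is false on dimension grounds (the left side has dimension $2p$, the right side $p$).

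The paper closes this gap by a different mechanism. One performs the symplectic reduction $F(x)=(G_-(x)+G_+(x))/(G_-(x)\cap G_+(x))$, a $2(n-p)$-dimensional symplectic space on which the projected Green bundles $g_\pm=p(G_\pm)$ are \emph{transverse} Lagrangians. One then proves, by explicit matrix estimates on the reduced cocycle (controlling the conorms of the off-diagonal blocks via the decay of the matrices $s_k^\pm$ on a large Egorov set, together with Birkhoff's theorem), that along $g_+$ the reduced cocycle has at least $n-p$ strictly positive Lyapunov exponents. Lifting back, this gives at least $n-p$ positive exponents for $Df$; symplectic symmetry gives $n-p$ negative ones; the easy inclusion $G_-\cap G_+\subseteq E^0$ gives at least $2p$ zero exponents; and the total $2n$ forces equality everywhere. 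Your proposal would need a replacement for this hyperbolicity estimate on the reduced cocycle, and none is offered.
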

 
 Then we will explain that there is a link between the angle between the two Green bundles and the size of the positive Lyapunov exponents.
  To do that, let us introduce some notations.

\begin{nota} We associate  an almost complex structure $J$ and then a Riemannian metric $(.,.)_x$ defined by: $(v,u)_x=\omega (x)(v, Ju)$ to  the symplectic form $\omega$ of $\A_n$; from now on,  we work with this fixed Riemannian metric of $\A_n$.\\ 
We choose on $ G_+(x)$ an orthonormal basis   and complete it in a symplectic basis whose last vectors are in $V(x)$.  \\
In these coordinates, $G_+$ is the graph of the zero-matrix and $G_-$ is the graphs of a negative semi-definite symmetric matrix that is denoted by $-\Delta S$.\\
In these coordinates, along the support of a minimizing measure, the image $Df.V$ of the vertical (resp. $Df^{-1}V$) is transverse to the vertical and then the graph of a symmetric matrix $S_1$ (resp. $S_{-1}$).\\
For a positive semi-definite symmetric matrix $S$ that is not the zero matrix, we decide to denote by $q_+(S)$ its smallest positive eigenvalue.
\end{nota} 
 \begin{thm}\label{sizeexp}
 Let $\mu$ be an ergodic  minimizing measure of a globally positive  diffeomorphism of $\A_n$ that has at least one non-zero Lyapunov exponent. We denote the smallest  positive  Lyapunov exponent of $\mu$ by $\lambda (\mu)$ and  an upper bound for $\| S_1-S_{-1}\|$ above ${\rm supp} \mu$ by   $C$. Then we have: 
$$\lambda (\mu)\geq\frac{1}{2}\int \log\left( 1+\frac{1}{C}q_+(\Delta S (x))\right) d\mu (x).$$
  \end{thm}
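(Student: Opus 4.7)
The plan is to establish a pointwise one-step cocycle inequality of the form $\|Df(x)v\|^2 \geq (1 + q_+(\Delta S(x))/C)\|v\|^2$ on a well-chosen Oseledets direction realizing the smallest positive Lyapunov exponent, and then to integrate it via Birkhoff's ergodic theorem. By Theorem \ref{nbexp}, the Oseledets decomposition of $\mu$ contains an unstable bundle $E^u$ of dimension $n-p$ sitting inside a Green bundle, on which $\lambda(\mu)$ is the smallest exponent. The factor $\tfrac{1}{2}$ in the conclusion reflects the standard passage between squared Riemannian lengths and logarithmic Lyapunov exponents.

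I would work in the symplectic basis adapted at each point to $G_+$ and $V$. The invariance of $G_+$ under $Df$ makes the matrix of $Df(x)$, from the adapted frame at $x$ to that at $f(x)$, block upper triangular,
\[
\tilde{Df}(x) = \begin{pmatrix} A(x) & B(x) \\ 0 & A(x)^{-T} \end{pmatrix},
\]
with $B(x)$ invertible by the twist hypothesis. Direct computations express $S_1$ and $S_{-1}$ in terms of $A$ and $B$ at adjacent points and translate the assumption $\|S_1 - S_{-1}\| \leq C$ into a uniform matrix bound on the cocycle. The invariance $Df(x)\,G_-(x) = G_-(fx)$ yields the Riccati-type identity $\Delta S(fx)\bigl(A(x) - B(x)\Delta S(x)\bigr) = A(x)^{-T}\Delta S(x)$, together with the factorization $A(x) - B(x)\Delta S(x) = -B(x)\bigl(S_{-1}(x) + \Delta S(x)\bigr)$, where $S_{-1} + \Delta S \succeq 0$ and $S_1 \succeq 0$ by the standard monotonicity of the Green sequences.

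The crux is to combine these identities with the bound $\|S_1 - S_{-1}\| \leq C$ to derive the pointwise one-step estimate
\[
\|Df(x)v\|^2 \;\geq\; \Bigl(1 + \tfrac{q_+(\Delta S(x))}{C}\Bigr)\,\|v\|^2
\]
for $v$ in the Oseledets direction of smallest positive exponent. The underlying picture is that the eigenvector of $\Delta S(x)$ with smallest positive eigenvalue $q_+$ singles out the direction in the unstable bundle that is closest to the center subbundle $G_-\cap G_+$ without belonging to it, hence the slowest-expanding direction; the ceiling $C$ on $\|S_1 - S_{-1}\|$ then prevents the expansion of that direction from collapsing arbitrarily close to $1$. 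Once the pointwise inequality is in hand, Birkhoff's ergodic theorem applied along an orbit tangent to the chosen Oseledets line gives
\[
2\lambda(\mu) \;=\; \lim_{n \to \infty} \frac{1}{n}\log \|Df^n(x)v\|^2 \;\geq\; \int \log\!\left(1 + \tfrac{q_+(\Delta S)}{C}\right) d\mu,
\]
and division by $2$ produces the stated inequality.

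The main obstacle is the pointwise estimate itself. In the one-dimensional setting of \cite{Arn4} everything is scalar and the expansion ratio along the unstable bundle equals $(S_1 + \Delta s)/(S_{-1} + \Delta s) \geq 1 + \Delta s/(S_1 - S_{-1}) \geq 1 + \Delta s/C$, yielding the bound directly. In higher dimensions the matrix-valued analogue requires careful handling of non-commutativity, and in particular one must verify that the infimum of $\|Df(x)v\|^2/\|v\|^2$ over unit $v$ in the correct Oseledets subspace is genuinely governed by the \emph{smallest} positive eigenvalue of $\Delta S$ rather than by a larger spectral quantity. Passing to the symplectic dual picture on $\R^{2n}/G_+ \cong V$, where the induced action is $A^{-T}$ and the image of $G_-$ is $\Im(\Delta S)$, is a natural framework for identifying the relevant direction and organizing the matrix Riccati estimates.
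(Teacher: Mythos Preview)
Your outline has the right skeleton (a one-step growth inequality telescoped via Birkhoff), but the pointwise estimate you single out as ``the main obstacle'' is a genuine gap, and in the form you state it, it is almost certainly \emph{false}. You ask for
\[
\|Df(x)v\|^2\;\geq\;\Bigl(1+\tfrac{1}{C}\,q_+(\Delta S(x))\Bigr)\,\|v\|^2
\]
in the fixed Riemannian metric, for $v$ in the Oseledets line of the smallest positive exponent. But the one-step expansion of $Df$ restricted to $G_+$ in the ambient metric is governed by $D_1(x)=S_0(fx)\,\tilde d_1(x)\,S_0(x)^{-1}$, where $S_0=(\U-\esse)^{1/2}$ encodes the angle between $G_-$ and $G_+$; since $S_0$ can vary wildly from $x$ to $fx$ (indeed $\Delta S$ is only measurable and may nearly degenerate), there is no reason for the conorm of $D_1$ in the ambient metric to be bounded below by $1+q_+(\Delta S)/C$ at every point. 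Trying to single out a particular Oseledets direction does not help: that direction need not be aligned with the eigenvector of $\Delta S$ realizing $q_+$, and you have no mechanism to control this mismatch.

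What the paper actually does is precisely to sidestep this obstacle by \emph{changing the metric} along the orbit. One passes to the reduced space $F(x)=E(x)/R(x)$ (so that $\Delta S$ becomes positive definite) and introduces the symplectic change of basis $P$ built from $S_0=(\U-\esse)^{1/2}$. In these new coordinates the cocycle becomes block diagonal, $\tilde M_k=\mathrm{diag}(\tilde a_k,\tilde d_k)$, and a direct computation gives
\[
{}^t\tilde d_k\,\tilde d_k\;=\;{\bf 1}+S_0\,(\esse-s_{-k})^{-1}\,S_0\;\geq\;{\bf 1}+\tfrac{1}{C}(\U-\esse),
\]
so that $m(\tilde d_1(x))^2\geq 1+\tfrac{1}{C}q_+(\Delta S(x))$ \emph{does} hold pointwise---but only in this adapted, $x$-dependent metric. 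Multiplicativity of $\tilde d_k$ (which is exact because the blocks are) then yields the Birkhoff sum. The price is that the change of basis $P$ is unbounded; the paper handles this by fixing $\varepsilon>0$, working on the positive-measure set $A_\varepsilon=\{\U-\esse\geq\varepsilon{\bf 1}\}$, and using recurrence to $A_\varepsilon$ to control the endpoint factors $S_0(x_k)$, $S_0(x_0)^{-1}$ when passing back from $\tilde d_k$ to $D_k$. This adapted-metric plus recurrence argument is the missing idea in your proposal; without it the telescoping step does not go through.
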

In fact, Garibaldi and Thieullen prove the existence of measures that have a stronger property than being minimizing: they are strongly minimizing\footnote{see subsection \ref{ssweakkam} for the definition}. They prove that the supports of these strongly minimizing measures are Lipschitz graphs $M$ above a compact subset of $\T^n$. In general, these graphs are not contained in a smooth graph. But we can define at every point $m\in M$  its {\em limit contingent cone} $\widetilde C_mM$ that is an extension of the notion of tangent space to a manifold \footnote{see section \ref{sscone} for the exact definition}.\\
Let us recall that we defined  in \cite{Arn5} an order $\leq$ between the Lagrangian subspaces of $T_x\A_n$ that are transverse to the vertical. If $\Lc_-$, $\Lc_+$ are two such subspaces such that $\Lc_-\leq \Lc_+$, we say that a vector $v\in T_x\A_n$ is between $\Lc_-$ and $\Lc_+$ if there exists a third Lagrangian subspace $\Lc$ such that $v\in\Lc$ and $\Lc_-\leq \Lc\leq \Lc_+$. \\
We will prove that the limit contingent cone to the support  of every strongly minimizing measure is between some {\em modified Green bundles} $\widetilde G_-$ and $\widetilde G_+$\footnote{see section \ref{sscone} for the precise definition}.
\begin{thm}\label{Tcone}
Let $\mu$ be a strongly minimizing measure of a globally positive  diffeomorphism of $\A_n$ et let ${\rm supp}\mu$ be its support. Then
$$\forall x\in {\rm supp} \mu, \widetilde G_-(x)\leq \widetilde C_x({\rm supp}\mu)\leq \widetilde G_+(x).$$

\end{thm}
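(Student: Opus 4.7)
The plan is to start from the defining property of the limit contingent cone and transport it through all iterates $Df^n$, then to compare with the characterization of the (modified) Green bundles as extremal Lagrangians whose iterates avoid the vertical.

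Fix $x\in{\rm supp}\,\mu$ and a nonzero $v\in\widetilde C_x({\rm supp}\,\mu)$. By definition of the limit contingent cone, there exist sequences $(x_k),(y_k)\subset{\rm supp}\,\mu$ with $x_k,y_k\to x$, $x_k\neq y_k$, and the unit chord direction $v_k:=(y_k-x_k)/\|y_k-x_k\|$ converges to $v/\|v\|$. Since ${\rm supp}\,\mu$ is a Lipschitz graph over its projection in $\T^n$ with some constant $L$, the vertical component of $v_k$ is dominated by its horizontal component with constant $L$; passing to the limit, the same holds for $v$. This gives a first bracket between $v$ and the vertical.

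The first main step is to promote this control to every iterate. For each fixed $n\in\Z$, the pushed set $f^n({\rm supp}\,\mu)={\rm supp}(f^n_*\mu)$ is again a Lipschitz graph (Garibaldi--Thieullen), with constant $L_n$ that is explicitly determined by the slopes of $S_1$ and $S_{-1}$ along the orbit of $x$ (these slopes in turn bound $\Delta S$ in the chosen symplectic basis). Since $f^n$ is $C^1$ and $x_k,y_k\to x$, the first-order expansion
$$f^n(y_k)-f^n(x_k)=Df^n(x_k)(y_k-x_k)+o(\|y_k-x_k\|)$$
applied for each \emph{fixed} $n$, followed by division by $\|y_k-x_k\|$ and taking $k\to\infty$, shows that $Df^n(x)v$ lies in the Lipschitz-graph cone of $f^n({\rm supp}\,\mu)$ at $f^n(x)$, with constant $L_n$. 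Thus $Df^n(x)v$ avoids the vertical and satisfies an explicit slope bound for every $n\in\Z$.

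The second step is to invoke the variational characterization of the Green bundles. Recall that $G_+(x)$ arises as the limit of $Df^n$-images of the vertical bundle iterated from arbitrarily far in the past, while $G_-(x)$ is the dual limit from the future; the order $\leq$ of \cite{Arn5} encodes precisely the positivity of the difference of the generating symmetric matrices of Lagrangian graphs. The modified Green bundles $\widetilde G_\pm(x)$ are defined (section \ref{sscone}) as the limits of these constructions applied not to Lagrangians but to directions/cones with bounded slopes. The uniform-in-$n$ slope bound on $Df^n(x)v$ established above is exactly the criterion that places $v$ between $\widetilde G_-(x)$ and $\widetilde G_+(x)$ in the sense of the order recalled before the statement: choosing any Lagrangian $\Lc\ni v$ whose $Df^n$-iterates inherit the bounded-slope property (which exists by compactness of the Lagrangian Grassmannian and passage to the limit), one automatically has $\widetilde G_-(x)\leq\Lc\leq\widetilde G_+(x)$, which by the definition of ``between'' recalled above is precisely the desired conclusion.

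The main obstacle is the bookkeeping in the second step: one must check that the Lipschitz constants $L_n$ of the successive graphs $f^n({\rm supp}\,\mu)$ match exactly the symmetric-matrix inequalities that define $\leq$ along the orbit. This is where one uses that the Green bundles can be computed from $S_1$ and $S_{-1}$ via the recursion induced by the generating function $S$, and that the Lipschitz constants of the invariant graph along an orbit satisfy the same recursion (up to the correct inequalities). A second subtlety is to convert the single vector $v$ into a Lagrangian witness, which relies on the cone structure built into the definition of $\widetilde G_\pm$ rather than on any smoothness of ${\rm supp}\,\mu$; this is the reason the \emph{modified} Green bundles, not $G_\pm$ themselves, appear in the statement.
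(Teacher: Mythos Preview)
Your proposal has a genuine gap at its core: the description of the modified Green bundles is incorrect, and the ``criterion'' you invoke does not exist.

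First, the modified Green bundles are \emph{not} obtained as ``limits of these constructions applied not to Lagrangians but to directions/cones with bounded slopes.'' They are defined by the explicit formula $\widetilde G_\pm(x)=\{(v,(S_\pm(x)\pm c_0(S_+(x)-S_-(x)))v)\}$ with the very specific constant $c_0=\frac{\sqrt{13}}{3}-\frac{5}{6}$. This constant does not come from any dynamical limiting procedure; it is the output of a purely algebraic lemma (Proposition~\ref{Pbilin}). So there is no variational characterization of $\widetilde G_\pm$ of the kind you appeal to.

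Second, the step ``the uniform-in-$n$ slope bound on $Df^n(x)v$ is exactly the criterion that places $v$ between $\widetilde G_-(x)$ and $\widetilde G_+(x)$'' is where the argument collapses. Note that ${\rm supp}\,\mu$ is $f$-invariant, so $f^n({\rm supp}\,\mu)={\rm supp}\,\mu$ and your $L_n$ is the single Lipschitz constant $L$ of the graph; you obtain only $\|Y\|\le L\|X\|$ for $v=(X,Y)$. To be ``between'' $\widetilde G_-$ and $\widetilde G_+$ in the sense defined, you must exhibit a \emph{symmetric matrix} $\sigma$ with $Y=\sigma X$ and $-(1+c_0)\Delta S\le\sigma\le c_0\Delta S$ (in the coordinates where $G_+$ is horizontal). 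A scalar Lipschitz bound on a single vector does not produce such a $\sigma$, and your compactness argument on the Lagrangian Grassmannian does not address this: a limit of Lagrangians containing iterates of $v$ need not have any relation to the $c_0$-widened band around $G_\pm$.

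The paper's proof takes a completely different route. It uses the weak KAM machinery: the pair of conjugate calibrated subactions $(u_-,u_+)$, the Ma\~n\'e potentials $\Ac_m$, and the comparison of $u_\pm$ with $\Ac_m$ to derive, for any $(X,Y)$ in the contingent cone, the two-sided inequalities of Proposition~\ref{Palg}:
\[
Y\cdot K\le \tfrac12\big(Q_+(K)+Q_+(X)-Q_-(X-K)\big),\qquad
Y\cdot K\ge \tfrac12\big(Q_-(K)+Q_-(X)-Q_+(K-X)\big),
\]
where $Q_\pm$ are the Hessians $\partial^2_y\Ac_m(x_{-m},x)$ and $-\partial^2_x\Ac_m(x,x_m)$. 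The bilinear-algebra Proposition~\ref{Pbilin} then converts these inequalities into the existence of the symmetric $\sigma$ with the $c_0$-bounds; this is exactly the step that manufactures a Lagrangian witness from a single vector, and it is where $c_0$ (hence the definition of $\widetilde G_\pm$) originates. Letting $m\to\infty$ replaces $Q_\pm$ by $S_\pm$, and semicontinuity of $G_\pm$ passes from the contingent cone to the limit contingent cone. None of this analytic input---the subactions, the action potential, the second-order comparison, and the algebraic lemma---appears in your sketch, and it cannot be bypassed by Lipschitz-graph bookkeeping alone.
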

Hence, the more irregular ${\rm supp}\mu$ is, i.e. the bigger the limit contingent cone is, the more distant  $\widetilde G_-$ and $\widetilde G_+$ (and thus $G_-$ and $G_+$ too)  are from each other and the larger the positive Lyapunov exponents are.

We define too a notion of $C^1$-isotropic graph (see subsection \ref{sscone}) that generalized the notion of $C^1$ isotropic manifold (for the symplectic form). Then we deduce from theorem \ref{Tcone}:
\begin{cor}\label{Cisotropic}
Let $\mu$ be an ergodic strongly minimizing measure of a globally positive diffeomorphism of $\A_n$ all exponents of whose are zero. Then ${\rm supp}\mu$ is $C^1$-isotropic almost everywhere.
\end{cor}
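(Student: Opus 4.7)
The plan is to derive the corollary as an essentially immediate combination of Theorems~\ref{nbexp} and~\ref{Tcone}. Since $\mu$ is ergodic and all its Lyapunov exponents vanish, Theorem~\ref{nbexp} forces the almost everywhere dimension $p$ of $G_-\cap G_+$ to satisfy $2p=2n$, hence $p=n$. As $G_-(x)$ and $G_+(x)$ are both Lagrangian (hence $n$-dimensional) subspaces of $T_x\A_n$, the equality $\dim(G_-(x)\cap G_+(x))=n$ forces $G_-(x)=G_+(x)$ at $\mu$-almost every $x$. Denote by $L(x)$ this common Lagrangian subspace.

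Next I would show that at these same points $\widetilde G_-(x)=\widetilde G_+(x)=L(x)$. This should follow directly from the construction of the modified Green bundles in subsection~\ref{sscone}: the modification is designed to be activated only where $G_-$ and $G_+$ differ, so at points where they already coincide it has no effect. Once this is established, Theorem~\ref{Tcone} gives
\[
\widetilde G_-(x)\leq \widetilde C_x({\rm supp}\,\mu)\leq \widetilde G_+(x)=\widetilde G_-(x)
\]
at $\mu$-a.e. $x$. By the definition of the order $\leq$ recalled just above Theorem~\ref{Tcone}, a vector $v$ lies between two coinciding Lagrangian subspaces only if it lies in that common subspace; hence $\widetilde C_x({\rm supp}\,\mu)\subset L(x)$ for $\mu$-a.e. $x$. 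Since $L(x)$ is Lagrangian, this is exactly the $C^1$-isotropy of ${\rm supp}\,\mu$ at $x$.

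The only real obstacle I anticipate is the second step: one must verify that the modification passing from $G_\pm$ to $\widetilde G_\pm$ preserves the equality $G_-=G_+$ at the points where it already holds, and that the notion of $C^1$-isotropic graph given in subsection~\ref{sscone} really amounts to the statement that the limit contingent cone is contained in a Lagrangian subspace. Both are matters of unwinding definitions; the substantive dynamical content is entirely contained in the two theorems already at our disposal, and the corollary itself is essentially a two-line consequence.
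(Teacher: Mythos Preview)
Your argument is correct and matches the paper's approach exactly: the paper simply states that the corollary ``is just a consequence of theorem~\ref{Tcone} and theorem~\ref{nbexp}'', and you have correctly unwound the definitions to make this explicit. Both anticipated ``obstacles'' are indeed trivial: the definition of $\widetilde G_\pm$ in subsection~\ref{sscone} shows directly that $S_+=S_-$ implies $\widetilde G_-=\widetilde G_+=G_-=G_+$, and the paper's definition of $C^1$-isotropic is precisely that the limit contingent cone lies in a Lagrangian subspace.
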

\subsection*{Some related results}

Theorem \ref{nbexp} is an extension of a result that we proved for the autonomous Tonelli Hamiltonians in \cite{Arn4}. The ideas of the proof are more or less the same ones as in \cite{Arn4}, but some adaptions are needed because we cannot use any continuous dependence in time.\\
A particular case of theorem \ref{sizeexp}   was proved in \cite{Arn4}:    the weak hyperbolic case, where the two Green bundles are almost everywhere transverse. Here we fill the gap by using the reduced Green bundles. \\
The inequality given in theorem \ref{Tcone} is completely new, even if an analogue to corollary \ref{Cisotropic} was given in \cite{Arn2} for Tonelli Hamiltonians.\\

 \begin{remk} 1) A discrete weak KAM theory is given in \cite{Gom1} too by D.~Gomes, but the condition used by the author there is the convexity of a Lagrangian function that is not the generating function, and this condition is different from the one we use.  But   Garibaldi \& Thieullen results can be used. 
 
 2) There exists too an Aubry-Mather theory for time-one maps of time-dependent Tonelli Hamiltonians (see for example \cite{Ber1}). Even when the manifold $M$ is $\T^n$, the time-one map is not necessarily a  globally positive diffeomorphism of $\A_n$. Moreover, except for the 2-dimensional annulus (see \cite{Mos1}),  it is unknown if a globally positive diffeomorphism is always the time-one map of a time-dependent Tonelli Hamiltonian (see theorem 41.1 in \cite{Gol1} for some partial results). In this article, we won't speak about these time-one maps and will focus on the globally positive diffeomorphisms.
 
 \end{remk}
 
\subsection*{Structure of the article}
In section \ref{sGreen}, after  explaining the construction of the classical Green bundles and the restricted Green bundles, we will prove  theorem \ref{nbexp}.\\
  We will then explain in section \ref{sAngles} that the mean angle between the two Green bundles gives the size of the smallest positive Lyapunov exponent. \\
  Section \ref{sKAM}  is devoted to some reminders in discrete weak KAM theory and to the proofs of theorem \ref{Tcone} and corollary \ref{Cisotropic}.\\
  There are two parts in the appendix. The first one is used in subsection \ref{fred} and the second one is used in subsection \ref{sscone}.
 \tableofcontents
\newpage

 \section{Green bundles}\label{sGreen}
 \subsection{Classical Green bundles}
 We recall some classical results that are in \cite{Arn4}. For the definition of the order between Lagrangian subspaces that are transverse to the vertical, see subsection \ref{sscomp} of the appendix.   Let $f:\A_n\rightarrow \A_n$ be a globally positive diffeomorphism.
  
 \begin{nota}
 If $k\in\Z$ and $x\in\A_n$, we denote by $G_k(x)$ the Lagrangian subspace $G_k(x)=(Df^k).V(f^{-k}x)$.
 \end{nota}
 
\begin{defin}
Let $x\in\A_n$ be a point the orbit of whose is minimizing. Then the sequence $(G_k(x))_{k\geq 1}$ is a strictly decreasing sequence of Lagrangian subspaces of $T_x(\A_n)$ that are transverse to $V(x)$ and $(G_{-k}(x))_{k\geq 1}$ is an increasing sequence of Lagrangian subspaces of $T_x(\A_n)$ that are transverse to $V(x)$. The two Green bundles are $x$ are the Lagrangian subspaces
$$G_-(x)=\lim_{k\rightarrow +\infty} G_{-k}(x)\quad{\rm and}\quad G_+(x)=\lim_{k\rightarrow +\infty}G_k(x).$$
\end{defin} 
It is proved in \cite{Arn4} that the two Green bundles are transverse to the vertical and verify:
$$\forall k\geq 1, G_{-k}<G_{-(k+1)}<G_-\leq G_+<G_{k+1}<G_k.$$
In general these two bundles are not continuous, but they depend in a measurable way to $x$.   Moreover, they are semicontinuous is some sense. Let us recall some properties that are proved in \cite{Arn4}. 
\begin{prop} Assume that the orbit of $x$ is minimizing. Then 
\begin{enumerate}
   \item[$\bullet$] $G_-$ and $G_+$ are invariant by the linearized dynamics, i.e. $Df.G_\pm=G_\pm\circ f$;
      \item[$\bullet$] for every compact $K$ such that the orbit of every point of $K$ is  minimizing, the two Green bundles restricted to $K$ are uniformly far from the vertical;
       \item[$\bullet$] (dynamical criterion) if the orbit of $x$ is minimizing and relatively compact in $\A_n$, if $\displaystyle{\liminf_{k\rightarrow+\infty} \| D(\pi\circ f^k)(x)v\|\leq +\infty}$ then $v\in G_-(x)$,\\
    if $\displaystyle{\liminf_{k\rightarrow+\infty} \| D(\pi\circ f^{-k})(x)v\|\leq +\infty}$ then $v\in G_+(x)$.

\end{enumerate}
\end{prop}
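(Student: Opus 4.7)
For item 1 (invariance of $G_\pm$), the argument is essentially algebraic. Starting from the definition $G_k(x) = Df^k(f^{-k}x) \cdot V(f^{-k}x)$, a direct manipulation yields $Df(x) \cdot G_k(x) = G_{k+1}(fx)$. The sequences $(G_k(x))_k$ and $(G_{k+1}(fx))_k$ are both monotone and convergent, to $G_+(x)$ and $G_+(fx)$ respectively, and $Df(x)$ acts continuously on the Lagrangian Grassmannian, so passing to the limit gives $Df(x) \cdot G_+(x) = G_+(fx)$. Replacing $k$ by $-k$ yields the invariance of $G_-$.

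For item 2 (uniform transversality on compact sets of minimizing points), I would use the sandwich $G_{-1}(x) < G_-(x) \leq G_+(x) < G_1(x)$ together with the continuity of $G_{\pm 1} = Df^{\pm 1} V$ on $\A_n$. In a local symplectic chart in which $V$ is the fibre, Lagrangians transverse to $V$ are graphs of symmetric matrices, and the order $\leq$ on such Lagrangians becomes the standard order on symmetric matrices. The globally positive assumption makes $G_{\pm 1}$ transverse to $V$, so each is represented by a continuously varying, hence bounded on the compact $K$, symmetric matrix. The sandwich then forces the matrices representing $G_\pm$ to be uniformly bounded on $K$, which is exactly the uniform transversality to $V$.

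Item 3 (the dynamical criterion) is the substantive part; I propose to prove the first implication by contrapositive. Suppose $v \notin G_-(x)$; the aim is to show $\|D(\pi\circ f^k)(x) v\| \to +\infty$. By item 2, $T_x \A_n = G_-(x) \oplus V(x)$, so there is a unique decomposition $v = a+b$ with $a \in G_-(x)$ and $b \in V(x) \setminus \{0\}$. For each $k \geq 1$, also decompose $v = a_k + b_k$ along $G_{-k}(x) \oplus V(x)$; the monotone convergence $G_{-k}(x) \nearrow G_-(x)$ gives $b_k \to b$, hence $\|b_k\| \geq \tfrac{1}{2}\|b\|$ for $k$ large. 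The identity $Df^k(x) \cdot G_{-k}(x) = V(f^k x)$ kills the first summand under projection, so $D(\pi \circ f^k)(x) v = D(\pi \circ f^k)(x) b_k$. The main obstacle is then to show that the restricted map $\varphi_k := D(\pi \circ f^k)(x)|_{V(x)} \colon V(x) \to \R^n$ is an isomorphism with $\|\varphi_k^{-1}\| \to 0$; granting this, $\|D(\pi \circ f^k)(x) v\| = \|\varphi_k b_k\| \geq \|b_k\|/\|\varphi_k^{-1}\| \to +\infty$, contradicting the hypothesis. To establish the expansion $\|\varphi_k^{-1}\| \to 0$ I would combine the uniform transversality of $G_+$ to $V$ on the relatively compact forward orbit (item 2) with the discrete second-variation / Jacobi machinery for minimizing orbits, which forces the horizontal projection of iterated non-trivial vertical vectors to diverge. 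The assertion for $G_+$ follows by the symmetric argument applied to $f^{-1}$, with $G_{-k}$ and $G_k$ interchanged.
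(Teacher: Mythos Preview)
The paper does not actually prove this proposition: it is stated as a reminder, with the sentence ``Let us recall some properties that are proved in \cite{Arn4}'' immediately preceding it. So there is no in-paper proof to compare your proposal against; your outline is essentially the standard argument one finds in that reference, and items~1 and~2 are handled correctly.

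For item~3 your reduction is the right one, and you correctly isolate the main obstacle: showing that the conorm of $\varphi_k = D(\pi\circ f^k)(x)|_{V(x)}$ tends to infinity. Your appeal to ``discrete second-variation / Jacobi machinery'' is pointing in the right direction but is not yet a proof. A concrete way to close this gap---and, in fact, exactly the mechanism the paper itself exploits later in the reduced setting (Lemma~\ref{LCVU})---is the symplectic identity for $Df^k$ written in coordinates adapted to $G_+$: if $Df^k = \begin{pmatrix} a_k & b_k\\ c_k & d_k\end{pmatrix}$ with $G_k(f^kx)$ the graph of $s_k^+(f^kx)=d_kb_k^{-1}$ and $G_{-k}(x)$ the graph of $s_k^-(x)$, then symplecticity yields $b_k s_k^-(x)\,{}^tb_k = -(s_k^+(f^kx))^{-1}$. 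Relative compactness of the orbit and item~2 bound $\|s_k^-\|$ uniformly, while $s_k^+(f^kx)\to 0$ (since $G_k\to G_+$ and $G_+$ is the zero section in these coordinates); together these force $m(b_k)\to+\infty$, which is precisely $\|\varphi_k^{-1}\|\to 0$. With this filled in, your contrapositive argument goes through.
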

 An easy consequence of the dynamical criterion and the fact that the Green bundles are Lagrangian is that when there is a splitting of $T_x(T^*M)$ into the sum of a stable, a center and a unstable bundle $T_x(T^*M)=E^s(x)\oplus E^c(x)\oplus E^u(x)$, for example an Oseledets splitting, then we have
$$E^s\subset G_-\subset E^s\oplus E^c\quad{\rm and} \quad E^u\subset G_+\subset E^u\oplus E^c.$$
Let us give the argument of the proof. Because of the dynamical criterion, we have $E^s\subset G_-$. Because the dynamical system is symplectic, the symplectic orthogonal subspace to $E^s$ is   $(E^s)^\bot=E^s\oplus E^c$ (see e.g. \cite{BocVia1}). Because $G_-$ is Lagrangian, we have $G_-^\bot=G_-$. We obtain then $G_-^\bot=G_-\subset E^{s\bot}=E^s\oplus E^c$.\\
Let us note the following straightforward consequence: for a minimizing measure,    the whole information concerning the positive (resp. negative) Lyapunov exponents is contained in the restricted linearized dynamics $Df_{|G_+}$ (resp.  $Df_{|G_-}$).
 
From $E^s\subset G_-\subset E^s\oplus E^c$ and $E^u\subset G_+\subset E^u\oplus E^c$, we deduce that $G_-\cap G_+\subset E^c$. Hence $G_-\cap G_+$ is an isotropic subspace (for $\omega$) of the symplectic space $E^c$. We deduce that $\dim (E^c)\geq 2\dim (G_-\cap G_+)$. When $E^s\oplus E^c\oplus E^u$ designates the Oseledet splitting of some minimizing measure, what is proved in \cite{Arn2} is that this inequality is an equality for the Tonelli Hamiltonian flows and we will prove here the same result for the globally positive diffeomorphisms of $\A_n$.

\subsection{Reduced Green bundles}\label{fred}
The reduced Green bundles were introduced in \cite{Arn2} for the Tonelli Hamiltonian flows. We will give a similar construction.

We assume that  $\mu$ is a minimizing ergodic measure   and  that $p\in [0, n]$ is so that at $\mu$-almost every point $x$, the intersection of  the Green bundles $G_+(x)$ and $G_-(x)$ is $p$-dimensional. We deduce from the above comments that for $\mu$ almost every $x\in \A_n$:  $G_+(x)\cap G_-(x)\subset E^c(x)$ and $E^s(x)\oplus E^u(x)= \left( E^c(x)\right)^\bot\subset G_+(x)^\bot+ G_-(x)^\bot=G_-(x)+ G_+(x)$.  

\begin{nota}
We introduce the two notations:  $E(x)=G_-(x)+G_+(x)$ and    $R(x)= G_-(x)\cap G_+(x)$. We denote the reduced space: $F(x)=E(x)/R(x)$  by $F(x)$ and we denote   the canonical projection $p~: E\rightarrow F$ by $p$.  As $G_-$ and $G_+$ are invariant by the linearized dynamics $Df$, we may define a reduced cocycle $M ~: F\rightarrow F$. But $M $ is not continuous, because $G_-$ and $G_+$ don't vary continuously.\\
Moreover, we introduce the notation: $\Vc(x)=V(x)\cap E(x)$ is the trace of the linearized vertical on $E(x)$ and $v(x)=p(\Vc(x))$ is the projection of $\Vc (x)$ on $F(x)$. We introduce a notation for the images of the reduced vertical $v(x)$ by $M^k$: $g_k(x)=M^kv(f^{-k}x)$.\\
Of course, we define an order on the set of the Lagrangian subspaces of $F(x)$ that are transverse to $v(x)$ exactly as this was done in the non-reduced case.
\end{nota}

The subspace $E(x)$ of $T_x\A_n$ is  co-isotropic with $E(x)^{\bot}=R(x)$. Hence $F(x)$ is nothing else than the symplectic space that is obtained by symplectic reduction of $E(x)$. We denote its symplectic form by $\Omega$. Then we have: $\forall (v, w)\in E(x)^2, \Omega (p(v), p(w))=\omega (v,w)$. Moreover,  $M$ is a symplectic cocycle.\\
We can notice, too, that $\dim E(x)=\dim (G_-(x)+G_+(x))=\dim G_-(x)+\dim G_+(x)-\dim (G_-(x)\cap G_+(x))=2n-p$ and deduce that $\dim F(x)=\dim E(x)-\dim (G_-(x)\cap G_+(x))=2(n-p)$.

\begin{nota} If $L$ is any Lagrangian subspace of $T_x\A_n$, we denote $(L\cap E(x))+R(x)$ by $\tilde{L}$ and  $p(\tilde{L})$ by $l$. 
\end{nota}
\begin{lemma}\label{bernard1} If $L\subset T_x\A_n$ is Lagrangian, then $\tilde{L}$ is also Lagrangian and $l=p(\tilde L)=p(L\cap E(x))$ is a Lagrangian subspace of $F(x)$. Moreover, $p^{-1}(l)=\tilde{L}$ . In particular, $v(x)$ is a Lagrangian subspace of $F(x)$ and $p^{-1}(v(x))=\Vc(x)+R(x)$.

\end{lemma}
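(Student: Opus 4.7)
The plan is to prove the three assertions in order: (a) the equality $p(\tilde L)=p(L\cap E(x))$ and the identity $p^{-1}(l)=\tilde L$; (b) that $\tilde L$ is Lagrangian in $T_x\A_n$; (c) that $l$ is Lagrangian in $F(x)$; and finally deduce the particular case $L=V(x)$.

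First I would observe that (a) is formal: because $R(x)=\ker p$, one has $p\bigl((L\cap E(x))+R(x)\bigr)=p(L\cap E(x))$, and because $R(x)\subset \tilde L$, one also has $p^{-1}(p(\tilde L))=\tilde L+\ker p=\tilde L$. So the substance of the lemma is (b) and (c).

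For (b) I would split into isotropy and dimension. Isotropy of $\tilde L$ in $(T_x\A_n,\omega)$ follows by checking the three cases: two vectors of $L\cap E(x)$ are $\omega$-orthogonal since $L$ is Lagrangian; a vector of $L\cap E(x)\subset E(x)$ is orthogonal to any vector of $R(x)=E(x)^\perp$; and $R(x)\subset E(x)^\perp$ is itself isotropic. For the dimension, I would use the symplectic orthogonal formula
$$(L\cap E(x))^\perp = L^\perp+E(x)^\perp = L+R(x),$$
together with $\dim L=n$, $\dim R(x)=p$ and $L\cap R(x)$ being the intersection $(L\cap E(x))\cap R(x)$ (since $R(x)\subset E(x)$). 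A direct count gives $\dim(L\cap E(x))=n-p+\dim(L\cap R(x))$, and then
$$\dim\tilde L=\dim(L\cap E(x))+\dim R(x)-\dim(L\cap R(x))=n,$$
so $\tilde L$ is Lagrangian.

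For (c), since $\Omega(p(u),p(v))=\omega(u,v)$ for $u,v\in E(x)$, the isotropy of $\tilde L$ transfers to $l=p(\tilde L)$; and since $R(x)\subset\tilde L$ with $\ker p=R(x)$, the dimension is $\dim l=n-p=\tfrac12\dim F(x)$, so $l$ is Lagrangian in $F(x)$. The particular case $L=V(x)$ is then immediate: transversality of $G_-$ and $G_+$ to the vertical gives $V(x)\cap R(x)=\{0\}$, so $V(x)\cap E(x)=\Vc(x)$ meets $R(x)$ trivially and $\tilde V=\Vc(x)+R(x)$, whence $v(x)=p(\Vc(x))=l$ is Lagrangian and $p^{-1}(v(x))=\Vc(x)+R(x)$.

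The only step that requires attention is the dimension computation in (b); everything else is routine manipulation of the symplectic reduction. The key bookkeeping point is that $E(x)$ is co-isotropic with $E(x)^\perp=R(x)\subset E(x)$, so that $(L\cap E(x))\cap R(x)=L\cap R(x)$, which is exactly what is needed to make the two terms involving $\dim(L\cap R(x))$ cancel and yield $\dim\tilde L=n$ independently of the relative position of $L$ with respect to $R(x)$.
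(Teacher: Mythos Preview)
Your proof is correct. The paper does not give its own proof of this lemma but simply refers to \cite{Arn2}; your argument is the standard symplectic-reduction computation and is almost certainly what appears there: verify isotropy of $\tilde L$ directly, compute its dimension via $(L\cap E(x))^\perp=L+R(x)$ together with $(L\cap E(x))\cap R(x)=L\cap R(x)$ (using $R(x)\subset E(x)$), and then push everything down to $F(x)$ through $p$.
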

The proof is given in \cite{Arn2}.
\begin{lemma}\label{L2}
The subspace $v(x)$ is a Lagrangian subspace of $F(x)$. Moreover, for every $k\not=0$, $g_k(f^kx)=M^kv(x)$ is transverse to $v(f^k(x))$
\end{lemma}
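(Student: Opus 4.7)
My plan is to read off the Lagrangianity of $v(x)$ for free from Lemma \ref{bernard1}: applied to the Lagrangian $L = V(x)$, that lemma gives precisely that $v(x) = p(\Vc(x))$ is a Lagrangian subspace of $F(x)$. So the real work lies in the transversality $M^k v(x) \cap v(f^k x) = \{0\}$ for $k \neq 0$.

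I would argue by contradiction. Since $R$ is $Df$-invariant, the reduced cocycle unpacks as $M^k v(x) = p(Df^k \Vc(x))$, and so a nonzero element of the intersection yields $u \in \Vc(x)$, $u' \in \Vc(f^k x)$ and $r \in R(f^k x)$ with
\[ Df^k u = u' + r, \qquad u \neq 0. \]
I would then fix Darboux coordinates at $f^k x$ adapted to the transverse Lagrangians $V(f^k x)$ and $G_+(f^k x)$: $V = \{q = 0\}$, $G_+ = \{p = 0\}$, so that $G_-$ is the graph of $-\Delta S(f^k x)$ with $\Delta S$ symmetric positive semidefinite. In these coordinates
\[ R = \ker(\Delta S) \times \{0\}, \qquad \Vc = \{0\} \times \mathrm{range}(\Delta S), \]
and $G_k(f^k x) = Df^k V(x)$ is the graph $\{(q, S_k q)\}$ of a symmetric matrix $S_k$ which, by the strict ordering $G_+ < G_k$ for $k \geq 1$ (respectively $G_k < G_-$ for $k \leq -1$), is positive (respectively negative) definite --- in either case, $S_k$ is definite.

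Writing $Df^k u = (q_0, S_k q_0)$ and $u' + r = (\tilde q, \tilde p)$ with $\tilde q \in \ker \Delta S$ (since $r \in R$) and $\tilde p \in \mathrm{range}(\Delta S)$ (since $u' \in \Vc$), the identity $Df^k u = u' + r$ gives $q_0 = \tilde q \in \ker \Delta S$ and $S_k q_0 = \tilde p \in \mathrm{range}(\Delta S) = (\ker \Delta S)^\perp$, where the last equality uses the symmetry of $\Delta S$. Pairing with $q_0$ itself yields $\langle S_k q_0, q_0 \rangle = 0$, and definiteness of $S_k$ forces $q_0 = 0$; hence $Df^k u = 0$, so $u = 0$, a contradiction. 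The main obstacle is the linear-algebra bookkeeping --- correctly identifying $\Vc, R, E$ in the adapted coordinates --- and the engine of the argument is that the symmetry of $\Delta S$ exactly matches $\mathrm{range}(\Delta S)$ with $(\ker \Delta S)^\perp$, so the constraint that $Df^k u$ reduces into $v(f^k x)$ becomes an orthogonality that confronts the definite sign of $S_k$ head-on.
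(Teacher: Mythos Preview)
Your proof is correct and follows essentially the same strategy as the paper's: the paper argues coordinate-free, writing $Df^k v = w + g$ with $w\in\Vc$, $g\in R\subset G_-$, and uses the strict inequality $G_-<G_k$ to get $\omega(g,w+g)>0$, which clashes with $Df^k v\in E=R^{\perp_\omega}$. Your computation is exactly this in the Darboux chart adapted to $V$ and $G_+$ --- indeed $\langle S_k q_0,q_0\rangle=\omega(r,Df^k u)$ --- so the two arguments coincide (yours handles $k>0$ and $k<0$ uniformly via definiteness of $S_k$, whereas the paper reduces to $k>0$).
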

\demo
The first result is contained in lemma \ref{bernard1}.\\
  Let us  consider $k\not=0$ and let us assume that $ M^kv(x) \cap v(f^kx)\not=\{ 0\}$.  We may assume that $k>0$ (or we replace $x$ by $f^k(x)$ and $k$ by $-k$).
  
  Then there exists $v\in \Vc(x)\backslash \{  0\}$ such that $Df^k(x)v\in \Vc (f^k x)+(G_-(f^kx)\cap G_+(f^kx))$.  Let us write $Df^k(x)v=w+g$ with $w\in \Vc (f^kx)$ and $g\in R(f^kx)$.  We know that the orbit has no conjugate vectors (because the measure is minimizing); hence $g\not=0$. 
  
  Moreover, we know that $Df^k V(x)$ is strictly above $G_-(f^kx)$, i.e. that:
  $$\forall h\in G_-(f^kx), \forall h'\in V(f^kx),    h+h'\in (Df^k V(x))\backslash \{ 0\}\Rightarrow  \omega  (h, h+h')> 0.$$
  We deduce that: $\omega (g, w+g)>0$. 
  
 This contradicts: $Df^kv\in E(f^kx)=\left(G_+(f^kx)\cap G_-(f^kx)\right)^\bot\subset (\R g)^\bot$.
\enddemo
\begin{lemma}\label{bernard2} Let $L_1$, $L_2$ be two Lagrangian subspaces of $T_x\A_n$ transverse  to $V(x)$ such that at least one of them is contained in $E(x)$. Then, if $L_1<L_2$ (resp. $L_1\leq L_2$), we have: $l_1$ and $l_2$ are transverse to $v(x)$ and $l_1 < l_2$ (resp. $l_1\leq l_2$). We deduce that $p(G_-)<p(G_+)$.

\end{lemma}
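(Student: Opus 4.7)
My plan is to reduce the claim to a block-matrix Schur complement computation in explicit coordinates. Fix a symplectic basis of $T_x\A_n$ in which $G_+(x)$ is the horizontal $\R^n\times\{0\}$, $V(x)$ is the vertical $\{0\}\times\R^n$, and $G_-(x)$ is the graph of $-\Delta S$ with $\Delta S\geq 0$; since $\dim R(x) = p$, the kernel $\ker\Delta S$ has dimension $p$. Split $\R^n = \ker\Delta S \oplus W$ orthogonally, with $W = \mathrm{im}(\Delta S)$, so that $\Delta S|_W$ is strictly positive definite. In these coordinates $R(x) = \ker\Delta S\times\{0\}$, $E(x) = \R^n\times W$, $\Vc(x) = \{0\}\times W$, and the symplectic quotient $F(x)$ identifies with $W\oplus W$ via $(q,\pi)+R(x) \mapsto (q_\parallel, \pi)$, with $v(x) = \{0\}\times W$ and $p(G_+(x)) = W\times\{0\}$. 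A direct check then shows that a Lagrangian $L$ transverse to $V(x)$ lies in $E(x)$ if and only if its graph matrix has block form $\mathrm{diag}(0,C_L)$, in which case $R(x)\subset L$, $\tilde L = L$, and $p(L)$ is the graph of $C_L$ in $F(x)$.

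Now assume without loss of generality that $L_1 \subset E(x)$, with graph matrix $\mathrm{diag}(0, C_1)$, and write that of $L_2$ as
\[
S_2 = \begin{pmatrix} A & B\\ B^T & C_2\end{pmatrix}
\]
in the decomposition $\R^n = \ker\Delta S \oplus W$. The hypothesis $L_1\leq L_2$ is exactly $S_2 - S_1\geq 0$; the top-left block immediately gives $A\geq 0$, and for $u\in\ker A$, varying $t\in\R$ in the quadratic form evaluated at $(u,tv)$ forces $u^T Bv=0$ for every $v$, i.e., $\ker A\subset\ker B^T$, equivalently $\mathrm{im}(B)\subset\mathrm{im}(A)$. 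Hence a point $(q_\perp+q_\parallel,\, S_2(q_\perp+q_\parallel))$ of $L_2$ lies in $E(x)$ iff $A q_\perp + B q_\parallel = 0$, which admits the solution $q_\perp = -A^+ B q_\parallel$, uniquely determined modulo $\ker A \subset\ker\Delta S$. Projecting to $F(x) = W\oplus W$ then identifies $l_2$ with the graph of the Schur complement $C_2 - B^T A^+ B$; in particular $l_2$ is transverse to $v(x)$.

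The classical Schur-complement inequality applied to $S_2 - S_1\geq 0$ yields $(C_2 - B^T A^+ B) - C_1\geq 0$, that is, $l_1\leq l_2$, with strict positive definiteness whenever $S_2 - S_1 > 0$, giving $l_1 < l_2$. The symmetric case $L_2\subset E(x)$ is handled identically, applying Schur complement to the first block $-A_1\geq 0$ of $S_2 - S_1$ and using $A_1^+ = -(-A_1)^+$. Finally, for the assertion $p(G_-) < p(G_+)$, apply the construction to $L_1 = G_-$ and $L_2 = G_+$, both of which lie in $E(x)$: their graph matrices are $\mathrm{diag}(0, -\Delta S|_W)$ and $0$, so the reduced graphs in $F(x)$ are $-\Delta S|_W$ and $0$, differing by $\Delta S|_W$, which is strictly positive definite by the choice of $W$. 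Note that this strict inequality survives even though only $G_-\leq G_+$ (and not $G_-<G_+$) holds in $T_x\A_n$, precisely because the reduction $E\to F$ kills the null directions of $\Delta S$. The main non-routine step is spotting that the block-positivity condition $\ker A\subset\ker B^T$ is automatically supplied by the ordering $L_1\leq L_2$, which is exactly what makes the Schur complement, and hence the whole reduction, well-defined.
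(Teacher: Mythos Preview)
Your proof is correct. The paper does not actually prove this lemma; it simply cites \cite{Arn2}, so there is no in-paper argument to compare against. Your approach---fixing adapted symplectic coordinates and identifying the reduced Lagrangians with Schur complements of the graph matrices---is clean and self-contained, and the key technical point (that the block condition $\ker A\subset\ker B^T$ follows automatically from $S_2-S_1\geq 0$, so that the generalized Schur complement is well-defined and $l_2$ is a graph over $W$, hence transverse to $v(x)$) is exactly the substance of the lemma.

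One small remark: the observation that $L_1\subset E(x)$ forces $R(x)\subset L_1$ can also be seen without coordinates, since $L_1\subset E(x)$ implies $L_1=L_1^{\bot}\supset E(x)^{\bot}=R(x)$; this is presumably closer to the spirit of the argument in \cite{Arn2}. But your explicit block computation has the advantage of making the transversality of $l_2$ to $v(x)$ and the ordering $l_1\leq l_2$ (resp.\ $l_1<l_2$) drop out simultaneously from a single standard linear-algebra fact, and it also handles the final claim $p(G_-)<p(G_+)$ directly, correctly noting that the strictness survives the reduction precisely because passing to $F(x)$ quotients out the kernel of $\Delta S$.
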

The proof is given in \cite{Arn2}.
 \begin{lemma}\label{linegred}
 If $\mu$ is a minimizing measure, for every $x\in{\rm supp}\mu$, for all $0<k<m$, we have:
 $$g_{-k}(x)<g_{-m}(x)<p(G_-)<p(G_+)<g_m(x)<g_k(x).$$
 \end{lemma}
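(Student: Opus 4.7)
The plan is to mimic, inside the reduced symplectic space $F$, the derivation of the chain $G_{-k}<G_{-(k+1)}<G_-\leq G_+<G_{k+1}<G_k$ recalled earlier for the non-reduced Green bundles. The three tools I would chain together are Lemma \ref{bernard2} (order preservation under $p$ when one of the two Lagrangians lies in $E$), Lemma \ref{L2} (transversality of $g_j(x)$ to $v(x)$ for every $j\ne 0$), and the non-reduced chain itself, which already supplies everything ``up to projection.''

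\emph{Middle block.} The three comparisons $G_{-m}<G_-$, $G_-\leq G_+$, $G_+<G_m$ each have at least one of their two Lagrangians contained in $E$, since $G_-,G_+\subset E$. Applying Lemma \ref{bernard2} three times produces $g_{-m}(x)<p(G_-(x))<p(G_+(x))<g_m(x)$ in $F(x)$, together with the transversality to $v(x)$ that the lemma delivers.

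\emph{Outer pair.} For $g_{-k}(x)<g_{-m}(x)$ with $0<k<m$ (and symmetrically for $g_m(x)<g_k(x)$), neither $G_{-k}$ nor $G_{-m}$ is automatically contained in $E$, so Lemma \ref{bernard2} does not apply directly. My remedy is to pass to $\tilde G_{-k}=(G_{-k}\cap E)+R$ and $\tilde G_{-m}=(G_{-m}\cap E)+R$; by Lemma \ref{bernard1} these are Lagrangian and, by construction, both are contained in $E$. A linear-algebra fact about coisotropic symplectic reduction (the content of subsection \ref{sscomp} of the appendix) says that this reduction is order-preserving, so the strict inequality $G_{-k}<G_{-m}$ in $T_x\A_n$ descends to $\tilde G_{-k}\leq \tilde G_{-m}$. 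Then Lemma \ref{bernard2} applied to $\tilde G_{-k}\leq \tilde G_{-m}$ (both now lying in $E$) gives $g_{-k}(x)=p(\tilde G_{-k})\leq p(\tilde G_{-m})=g_{-m}(x)$.

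To upgrade this to a strict inequality I would argue by contradiction, adapting the proof of Lemma \ref{L2}. If $g_{-k}(x)=g_{-m}(x)$, then, using that $E$ and $R$ are $Df$-invariant and hence that $M^m$ is well defined, applying $M^m$ to both sides produces $g_{m-k}(f^mx)=v(f^mx)$ in $F(f^mx)$; since $m-k\ne 0$, this contradicts the transversality of Lemma \ref{L2}. The chain $g_m(x)<g_k(x)$ on the ``positive'' side is then obtained by the symmetric argument with $f^{-1}$ in place of $f$. The step at which I expect the main friction is the monotonicity of the coisotropic reduction used to compare $\tilde G_{-k}$ with $\tilde G_{-m}$; unlike the middle block this is not an immediate application of Lemma \ref{bernard2}, and it is precisely the piece that the appendix is designed to supply.
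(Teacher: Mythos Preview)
Your route diverges from the paper's, and the step you yourself flag as the crux is not available where you say it is.

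The paper does \emph{not} try to project the full non-reduced chain $G_{-k}<G_{-m}$ down to $F(x)$. Instead it argues inductively inside $F(x)$: from $G_+<G_1$ and Lemma~\ref{bernard2} (here $G_+\subset E$, so the hypothesis is met) it gets $p(G_+)<g_1$, i.e.\ $g_1\in\Pc(p(G_+),v)$; applying the symplectic cocycle $M$ yields $g_2\in\Pc(p(G_+),g_1)$; Proposition~\ref{proappbis} then turns this cone membership into $p(G_+)<g_2<g_1$, and one iterates. So Lemma~\ref{bernard2} is used only once, at the very first step, and the rest of the ordering is manufactured dynamically via the cone calculus of the appendix. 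The paper even remarks that the continuous-time argument of \cite{Arn2} is unavailable here, which is precisely why this iteration is set up.

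Your ``outer pair'' step assumes that $G_{-k}<G_{-m}$ implies $\tilde G_{-k}\leq\tilde G_{-m}$, and you attribute this to subsection~\ref{sscomp}. That subsection contains only the definition of the order and Propositions~\ref{propapp}--\ref{proappter}; it says nothing about the operation $L\mapsto\tilde L=(L\cap E)+R$ or about coisotropic reduction. The statement you need is in fact true---in the coordinates of subsection~\ref{sshyp}, the matrix $S_{\tilde L}$ is block-diagonal with nonzero block equal to the Schur complement of the lower-right block of $S_L$, and for $G_{-k},G_{-m}$ that block is negative definite (since $G_{-k},G_{-m}<G_+$), so Schur-complement monotonicity applies---but this requires an argument that is neither in the appendix nor in Lemma~\ref{bernard2}. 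As written, your proof has a gap exactly at the point you identified as delicate; the paper's iterative approach sidesteps the issue entirely by never comparing two Lagrangians that both lie outside $E$. Your strictness upgrade via $M^m$ and Lemma~\ref{L2} is fine, but it is not needed in the paper's argument, where strictness comes for free from Proposition~\ref{proappbis}.
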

  \demo
  We cannot use the proof given in \cite{Arn2} that use in a crucial way the continuous dependence on time.  Let us prove by iteration on $k\geq 1$ that $p(G_+)<g_{k+1}<g_k$, i.e. that $g_{k+1}\in\Pc(p(G_+), g_{k})$ with the notations of the appendix.\\
 Because $G_+<G_1$ and because of lemma \ref{bernard2}, we have $p(G_+)<g_1$ i.e. $g_1\in\Pc(p(G_+),v)$. Taking the image by $M$, we deduce: $g_2\in \Pc(p(G_+), g_1)$. We deduce from proposition \ref{proappbis} (see the appendix) that $p(G_+)<g_2<g_1$.   The result for $g_k$ with $k\geq 1$ is just an iteration, and the result for $k\leq -1$ is very similar. \enddemo
 
 \begin{lemma}
 We have: $\displaystyle{\lim_{k\rightarrow +\infty}g_k=p(G_+)\quad{\rm and} \quad \lim_{k\rightarrow +\infty}g_{-k}=p(G_-)}$
\end{lemma}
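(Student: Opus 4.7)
The plan is in two stages: first establish that the limits exist by a monotonicity-plus-compactness argument, then identify them using a Schur complement computation in adapted coordinates.

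By lemma \ref{linegred}, $(g_k(x))_{k\geq 1}$ is strictly decreasing in the order on Lagrangian subspaces of $F(x)$ transverse to $v(x)$ and bounded below by $p(G_+(x))$; dually $(g_{-k}(x))_{k\geq 1}$ is strictly increasing and bounded above by $p(G_-(x))$. Parametrizing Lagrangians transverse to $v(x)$ by symmetric $(n-p)\times(n-p)$ matrices via the graph correspondence turns both into monotone bounded sequences of symmetric matrices, so the limits $g_+:=\lim g_k$ and $g_-:=\lim g_{-k}$ exist as Lagrangian subspaces of $F(x)$, with $p(G_+)\leq g_+$ and $g_-\leq p(G_-)$.

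To pin down these limits I would fix a symplectic chart at $x$ in which $G_+(x)$ is the horizontal Lagrangian and $V(x)$ the vertical, so that $G_k(x)$ is the graph of a positive definite symmetric matrix $T_k$ with $T_k\to 0$, while $G_-(x)$ is the graph of a negative semi-definite symmetric matrix $T_-$ of rank $n-p$. Using the orthogonal splitting $\R^n=\ker T_-\oplus(\ker T_-)^\perp$, block-decompose
\[
T_k=\begin{pmatrix}A_k & B_k\\ B_k^T & C_k\end{pmatrix},\qquad A_k>0.
\]
Starting from the identity $g_k=p(G_k\cap E)$, a direct computation shows that, in the natural coordinates on $F(x)\cong\R^{n-p}\oplus\R^{n-p}$ induced by this chart, $g_k$ is the graph of the Schur complement $\tilde C_k:=C_k-B_k^T A_k^{-1} B_k$, and $p(G_+)$ is the graph of the zero matrix.

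The convergence then follows from the Schur complement inequality: $T_k\geq 0$ with $A_k>0$ forces $0\leq \tilde C_k\leq C_k$, and since $C_k\to 0$ we obtain $\tilde C_k\to 0$, that is, $g_k\to p(G_+)$. The case $g_{-k}\to p(G_-)$ is entirely symmetric: writing $T_{-k}=T_-\,-P_k$ with $P_k>0$ and $P_k\to 0$ (reflecting $G_{-k}<G_-$ and $G_{-k}\to G_-$), the same Schur complement analysis applied now to the blocks of $P_k$ yields the claim. The main obstacle relative to the continuous-time Tonelli case of \cite{Arn2} is precisely that one cannot invoke continuous $t$-dependence of an invariant Hamiltonian flow to transfer the convergence $G_k\to G_\pm$ to the quotient $F(x)$; the explicit identification of $g_k$ as a Schur complement circumvents this by performing the transfer algebraically, after which the sign conditions $T_k\geq 0$ and $P_k\geq 0$ give the convergence for free.
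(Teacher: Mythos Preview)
Your proof is correct and takes a genuinely different route from the paper's argument. After the common first step (monotone bounded sequences of Lagrangians converge), the paper argues by contradiction and dynamics: it lifts the putative limit $g_+$ back to a $Df$-invariant Lagrangian $W=p^{-1}(g_+)$ of $T_x\A_n$ transverse to $V$, sandwiches $W$ between $G_+$ and the iterates $Df^kL\circ f^{-k}$ of an auxiliary Lagrangian $L$ (using the comparison propositions \ref{propapp}--\ref{proappter} from the appendix), and lets $k\to\infty$ to force $W=G_+$. Your argument is instead purely algebraic at the fixed point $x$: you identify $g_k$ with $p(G_k\cap E)$, compute this intersection explicitly in block form, and recognise the result as the Schur complement $\tilde C_k=C_k-B_k^TA_k^{-1}B_k$ of the upper-left block of $T_k$; the inequalities $0\le\tilde C_k\le C_k$ and $C_k\to 0$ then give the limit directly.

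What each buys: the paper's proof stays within the order-theoretic machinery already in place and never needs an explicit coordinate computation, but it relies on the invariance of $g_+$ under $M$ and on several appendix comparison lemmas. Your approach is more elementary and self-contained (only linear algebra after the identity $g_k=p(G_k\cap E)$), and it makes transparent \emph{why} the reduced images converge exactly as fast as the ambient $G_k$. Two small points you should make explicit in a final write-up: (i) justify the identity $g_k=p(G_k\cap E)$ via $Df^k(V\cap E)=(Df^kV)\cap(Df^kE)=G_k\cap E$ and the invariance of $E$; (ii) note that the top-left block $A_k$ is invertible because $T_k>0$ (and in the $g_{-k}$ case because $P_k>0$), which is what permits the Schur complement step.
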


\demo
From lemma \ref{linegred}, we deduce that the $(g_k)_{k\geq 1}$ converges to $g_+\geq p(G_+)$ and that $(g_{-k})_{k\geq 1}$ converges to $g_-\leq p(G_-)$.\\
Let us assume for example that $g_+\not=p(G_+)$. Then $W=p^{-1}(g_+)$ is transverse to $V$ and invariant by $Df$.\\
Moreover, for every $w\in W$ and $v\in G_+$, we have: $\omega(w, v)= \Omega(p(w), p(v))$. We deduce that $G_+\leq W$. We  choose a Lagrangian subspace $L$ of $T_x(\A_n)$ such that $W<L$ and $G_1<L$.\\
Because $G_1<L$, we have $L\in \Pc(G_1, V)$ and then for every $k\geq 1$: $Df^k(L\circ f^{-k} )\in \Pc(G_{k+1}, G_k)$, hence, by proposition \ref{proappbis}, $G_{k+1}<Df^kL<G_k$. Note that this implies that $G_+<Df^kL$.\\
Because $G_+\leq W<L$, we have $W\in \overline{ \Pc(G_+, L)}$ by proposition \ref{proappter} and then $W=Df^kW\circ f^{-k}\in \overline{\Pc (G_+, Df^kL\circ f^{-k})}$, and then  $G_+\leq W\leq Df^kL\circ f^{-k}$ by proposition \ref{proappbis}.\\
We have finally proved
$$\forall k\geq 1, G_+\leq W\leq Df^kL\circ f^{-k}<G_k.$$
Taking the limit, we obtain: $W=G_+$.
\enddemo
\begin{defin}
The two Lagrangian subbundles $g_-=p(G_-)$ and $g_+=p(G_+)$ are the two {\em reduced Green bundles}.
\end{defin}
\subsection{Weak hyperbolicity of the reduced cocycle}\label{sshyp}
With the notations of subsection \ref{fred}, we will now explain why the reduced cocycle is weakly hyperbolic and why $\mu$ has exactly $2p$ zero Lyapunov exponents. The proof is very similar to the one given in \cite{Arn2} for the Tonelli Hamiltonian flows, we just translate it to the discrete case.

 We choose at every point $x\in{\rm supp}\mu$ some (linear) symplectic coordinates $(Q,P)$ of $F(x)$ such that $v(x)$ has for equation: $Q=0$ and $g_+(x)$ has for equation $P=0$. We will be more precise on this choice later. Then the matrix of $M^k(x)=M(f^{k-1}(x))\dots M(x)$ in these coordinates is a symplectic matrix: $M^k(x)=\begin{pmatrix}
 a_k(x)&b_k(x)\\
0&d_k(x)\\
 \end{pmatrix}$. As $M^k(x)v(x)=g_k(f^kx)$ is a Lagrangian subspace of $E(f^kx)$ that is transverse to the vertical,  then $\det b_k(x)\not=0$ and there exists a symmetric matrix $s_k^+(f^kx)$ whose graph is $g_k(f^kx)$, i.e: $d_k(x)=s_k^+(f^k(x))b_k(x)$. Moreover, the family $(s^+_k(x))_{k>0}$ being decreasing and tending to zero (because by hypothesis the horizontal is $g_+$), the symmetric matrix $s^+_k(f^kx)$ is positive definite. Moreover, the matrix $M^k(x)$ being symplectic, we have: 
 $$\left(M^k(x)\right)^{-1}=\begin{pmatrix} {}^td_k(x)&-{}^tb_k(x)\\
 0&{}^ta_k(x)\\
\end{pmatrix}$$
and by definition of $g_{-k}(x)$, if it is the graph of the matrix $s^-_k(x)$ (that is negative definite), then: ${}^ta_k(x)=-s_k^-(x){}^tb_k(x)$ and finally:
$$M^k(x)=\begin{pmatrix}
-b_k(x)s_k^-(x)& b_k(x)\\
0& s_k^+(f^kx)b_k(x)\\
\end{pmatrix}
$$
Let us be now more precise in the way we choose our coordinates; as explained at the end of the introduction, we may associate  an almost complex structure $J$ and then a Riemannian metric $(.,.)_x$ defined by: $(v,u)_x=\omega (x)(v, Ju)$ with the symplectic form $\omega$ of $\A_n$; from now on,  we work with this fixed Riemannian metric of $\A_n$. We choose on $ G_+(x)=p^{-1}(g_+(x))$ an orthonormal basis whose last vectors are in $R(x)$ and complete it in a symplectic basis whose last vectors are in $V(x)$. We denote  the associated  coordinates of $T_x\A_n$ by $(q_1, \dots, q_n, p_1, \dots, p_n)$. These (linear) coordinates don't depend in a continuous way on the point $x$ (because $G_+$ doesn't), but in a bounded way. Then $G_-(x)=p^{-1}(g_-(x))$ is the graph of a symmetric matrix whose kernel is $R(x)$ and then on $G_-(x),$ we have: $p_{n-p+1}=\dots =p_n=0$. An element of $E(x)$ has coordinates such that $p_{n-p+1}=\dots =p_n=0$, and an element of $F(x)=E(x)/R(x)$ may be identified with an element with coordinates $(q_1, \dots , q_{n-p}, 0, \dots , 0, p_1, \dots , p_{n-p}, 0, \dots  , 0)$. We then use on $F(x)$ the norm $\displaystyle{\sum_{i=1}^{n-p}(q_i^2+p_i^2)}$, which is the norm for the Riemannian metric   of the considered element of $F(x)$. Then this norm depends in a measurable way on $x$.

\begin{lemma}\label{LJ}
For every $\varepsilon >0$, there exists a measurable subset $J_\varepsilon$ of ${\rm supp}\mu$ such that:
\begin{enumerate}
\item[$\bullet$] $\mu (J_\varepsilon)\geq 1-\varepsilon$;
\item[$\bullet$] on $J_\varepsilon$, $(s_k^+)$ and $(s_k^-)$ converge uniformly ;
\item[$\bullet$] there exists two constants $\beta=\beta(\varepsilon)>\alpha=\alpha(\varepsilon)>0$ such that: $\forall x\in J_\varepsilon,\beta{\bf 1}\geq  -s_-(x)\geq \alpha {\bf 1}$ where $g_-$ is the graph of $s_-$.
\end{enumerate}
\end{lemma}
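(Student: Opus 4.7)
The plan is to deduce the lemma from the pointwise convergence and pointwise transversality statements already at our disposal, upgrading them to uniform statements on a large set via Egorov's theorem together with the standard fact that an a.e.\ finite (resp.\ a.e.\ positive) measurable function stays in any given compact positive interval off an arbitrarily small set.

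First, I would record the pointwise ingredients. From the construction of the reduced Green bundles, the sequence $(s_k^+(x))_{k\geq 1}$ is decreasing, the sequence $(s_k^-(x))_{k\geq 1}$ is increasing, and by the previous lemma they converge at every $x\in{\rm supp}\mu$ to $s_+=0$ and to $s_-(x)$ respectively. The strict inequality $g_-(x)<g_+(x)$ in $F(x)$, also established above, says precisely that $-s_-(x)$ is symmetric positive definite at every such point. Moreover, since the symplectic basis of $F(x)$ used to represent these matrices is orthonormal for the fixed Riemannian metric (the first vectors form an orthonormal basis of $G_+(x)$), the entries of $s_k^\pm(x)$ and $s_-(x)$ are $\mu$-measurable functions of $x$, and the associated operator norms coincide with the intrinsic ones.

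Next, let $\alpha_0(x):=\lambda_{\min}(-s_-(x))$ and $\beta_0(x):=\lambda_{\max}(-s_-(x))$. These are measurable functions of $x$, finite and strictly positive at every point of ${\rm supp}\mu$. Therefore one can choose constants $0<\alpha<\beta<+\infty$ (depending on $\varepsilon$) such that the measurable set
$$A_\varepsilon=\{x\in{\rm supp}\mu:\alpha\leq \alpha_0(x)\text{ and }\beta_0(x)\leq \beta\}$$
satisfies $\mu(A_\varepsilon)\geq 1-\varepsilon/2$, and on $A_\varepsilon$ the estimate $\beta\mathbf{1}\geq -s_-(x)\geq \alpha\mathbf{1}$ holds. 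Applying Egorov's theorem to the two $\mu$-a.e.\ converging sequences $(s_k^+)$ and $(s_k^-)$ yields measurable sets $B_\varepsilon^+$ and $B_\varepsilon^-$ of measure at least $1-\varepsilon/4$ on which each sequence converges uniformly. The set $J_\varepsilon:=A_\varepsilon\cap B_\varepsilon^+\cap B_\varepsilon^-$ then satisfies $\mu(J_\varepsilon)\geq 1-\varepsilon$ and inherits all three conclusions of the lemma simultaneously.

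The only real obstacle to watch is that the chosen basis of $F(x)$ depends only \emph{measurably} on $x$, because the bundle $G_+$ is merely measurable. This rules out any argument based on continuity, compactness, or equicontinuity; one cannot hope for uniform convergence on the entire support, nor for a positive uniform lower bound on $\alpha_0$ there. Egorov's theorem, together with the Lusin-type reduction to bounded measurable functions used to produce $A_\varepsilon$, is precisely the tool designed to bypass this lack of continuity, and it delivers the lemma without further difficulty.
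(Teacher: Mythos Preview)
Your proposal is correct and follows essentially the same approach as the paper: the paper's own proof is a one-line appeal to Egorov's theorem together with the fact that $g_-$ and $g_+$ are transverse (hence $-s_-$ is positive definite) $\mu$-almost everywhere, and your argument is a detailed unpacking of exactly this. The only minor imprecision is your phrase ``at every such point'': the reduced space $F(x)$ and the inequality $g_-<g_+$ are only defined on the $\mu$-full-measure set where $\dim(G_-\cap G_+)=p$, not on all of ${\rm supp}\mu$, but this does not affect the argument.
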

\demo  This is a consequence of Egorov theorem and of the fact that $\mu$-almost everywhere on ${\rm supp}\mu$, $g_+$ and $g_-$ are transverse and then $-s_-$ is positive definite.
\enddemo
 We deduce:
 \begin{lemma}\label{LCVU}
 Let $J_\varepsilon$ be as in the previous lemma. On the set $\{ (k,x)\in\N\times J_\varepsilon, f^k(x)\in J_\varepsilon\}$, the sequence of conorms $(m(b_k(x)))$ converge uniformly to $+\infty$, where $m(b_k)=\| b_k^{-1}\| ^{-1}$.
 \end{lemma}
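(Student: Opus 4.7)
The plan is to exploit the symplecticity of $M^k(x)$ to extract an explicit identity linking $b_k(x)$, $s_k^-(x)$, and $s_k^+(f^k x)$, and then to feed into it the uniform convergence furnished by lemma \ref{LJ}.

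First, I would write out the symplectic relation $(M^k(x))^T J (M^k(x)) = J$ in block form, starting from
$$M^k(x) = \begin{pmatrix} -b_k(x) s_k^-(x) & b_k(x) \\ 0 & s_k^+(f^k x) b_k(x) \end{pmatrix}.$$
The $(1,2)$-block of the product reads $(-b_k(x) s_k^-(x))^T \cdot s_k^+(f^k x)\, b_k(x) = I$, and since $s_k^-(x)$ is symmetric this rearranges to the key identity
$$b_k(x)^T\, s_k^+(f^k x)\, b_k(x) = -\bigl(s_k^-(x)\bigr)^{-1}.$$
Both sides are positive definite ($s_k^-$ is negative definite, $s_k^+$ positive definite, and $b_k$ invertible), so the identity is sign-consistent.

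Next, I would test it against an arbitrary nonzero $v$, giving $\bigl\langle s_k^+(f^k x)\, b_k(x) v,\, b_k(x) v\bigr\rangle = \bigl\langle -(s_k^-(x))^{-1} v,\, v\bigr\rangle$. The left-hand side is bounded above by $\|s_k^+(f^k x)\|\cdot |b_k(x) v|^2$. For the right-hand side, the uniform convergence $s_k^-\to s_-$ on $J_\varepsilon$ together with the hypothesis $-s_-\leq \beta I$ yields, uniformly in $x\in J_\varepsilon$ and for $k$ large, the estimate $\|s_k^-(x)\|\leq 2\beta$, whence $\bigl\langle -(s_k^-(x))^{-1} v, v\bigr\rangle\geq (2\beta)^{-1}|v|^2$. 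Combining,
$$|b_k(x) v|^2 \geq \frac{|v|^2}{2\beta\, \|s_k^+(f^k x)\|},\qquad \text{i.e.}\qquad m(b_k(x)) \geq \bigl(2\beta\,\|s_k^+(f^k x)\|\bigr)^{-1/2}.$$
Since $g_+$ is the graph of the zero matrix in our coordinates, the uniform convergence of $(s_k^+)$ on $J_\varepsilon$ is uniform convergence to zero, so $\sup_{y\in J_\varepsilon}\|s_k^+(y)\|\to 0$ as $k\to +\infty$. Substituting $y = f^k(x)\in J_\varepsilon$ then forces $m(b_k(x))\to +\infty$ uniformly over the specified set of pairs $(k,x)$.

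The main subtlety is bookkeeping: the blocks of $M^k(x)$ mix the two base points $x$ and $f^k(x)$, and the symplectic bases chosen there depend only measurably on the point. One has to verify carefully that the matrix $s_k^+$ appearing in the key identity is indeed evaluated at $f^k(x)$ rather than at $x$, which is exactly what makes the hypothesis $f^k(x)\in J_\varepsilon$ relevant and the uniform-convergence argument applicable. Once this is settled, the rest is a one-line operator inequality.
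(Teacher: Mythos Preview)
Your proof is correct and follows essentially the same line as the paper's: extract the symplectic identity from the block form of $M^k(x)$, then combine the uniform bound on $s_k^-$ over $J_\varepsilon$ with the uniform convergence $s_k^+\to 0$ on $J_\varepsilon$ (applied at the point $f^k(x)$). The only cosmetic difference is that the paper rearranges the identity as $b_k(x)\,s_k^-(x)\,{}^tb_k(x)=-\bigl(s_k^+(f^kx)\bigr)^{-1}$ and bounds $\|{}^tb_k(x)v\|$, whereas you use the transposed form $b_k(x)^T s_k^+(f^kx)\,b_k(x)=-(s_k^-(x))^{-1}$ and bound $\|b_k(x)v\|$; since $m(b_k)=m({}^tb_k)$, the two are equivalent.
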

 \demo Let $k, x$ be as in the lemma.\\
 The matrix  $M_k(x)=\begin{pmatrix}
 -b_k(x)s_k^-(x)& b_k(x)\\
 0& s_k^+(f^kx)b_k(x)\\
 \end{pmatrix}
 $ being symplectic, we have: \\
 $-s_k^-(x){}^tb_k(x)s_k^+(f^kx)b_k(x)={\bf 1}$ and thus  
 $-b_k(x)s_k^-(x){}^tb_k(x)s_k^+(f^kx)={\bf 1}$ and:\\
  $b_k(x)s_k^-(x){}^tb_k(x)=-\left(s_k^+(f^kx)\right)^{-1}$. \\
 We know that on $J_\varepsilon$, $(s_k^+)$ converges uniformly to zero. Hence,  for every $\delta>0$, there exists $N=N(\delta) $ such that: $k\geq N\Rightarrow \| s_k^+(f^kx)\|\leq \delta$. Moreover, we know that $\| s_k^-(x)\|\leq \beta$. Hence, if we choose $\delta'=\frac{\delta^2}{\beta}$, for every $k\geq N=N(\delta')$ and $x\in J_\varepsilon$ such that $f^kx\in J_\varepsilon$, we obtain: 
 $$\forall v\in\R^p,\beta \| {}^tb_k(x)v\|^2= {}^tv b_k(x)(\beta{\bf 1}){}^tb_k(x)v\geq - {}^tv b_k(x)s_k^-(x){}^tb_k(x)v={}^tv\left(s_k^+(f^kx)\right)^{-1}v$$
 and we have: ${}^tv\left(s_k^+(f^kx)\right)^{-1}v\geq \frac{\beta}{\delta^2}\| v\|^2$ because $s_k^+(f^kx)$ is a positive definite matrix that is less than $\frac{\delta^2}{\beta}{\bf 1}$. We finally obtain: $\| {}^tb_k(x)v\|\geq \frac{1}{\delta}\| v\|$ and then the result that we wanted.
 \enddemo
 From now we fix  a small constant $\varepsilon>0$, associate  a set $J_\varepsilon$ with $\varepsilon$  via lemma \ref{LJ} and two constants $0<\alpha<\beta$; then  there exists $N\geq 0$ such that
 $$\forall x\in J_\varepsilon, \forall k\geq N, f^k(x)\in J_\varepsilon\Rightarrow m(b_k(x))\geq \frac{2}{\alpha}.$$

 \begin{lemma}
 Let $J_\varepsilon$ be as in lemma \ref{LJ}. For $\mu$-almost point $x$ in $J_\varepsilon$, there exists a sequence of integers $(j_k)=(j_k(x))$ tending to $+\infty$ such that: 
 $$\forall k\in \N, m(b_{j_k}(x)s^-_{j_k}(x))\geq \left( 2^\frac{1-\varepsilon}{2N}\right)^{j_k}.$$
 \end{lemma}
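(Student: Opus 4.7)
\emph{Proof proposal.} My plan is to show that $g_n(x) := \log m\bigl(-b_n(x)s^-_n(x)\bigr)$ is a superadditive cocycle, apply Kingman's ergodic theorem, and bound the asymptotic mean $\chi$ from below using the previous lemmas.

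Since in our coordinates $M^k$ is block upper-triangular with top-left block $-b_k s^-_k$, the cocycle identity $M^{n+k}(x) = M^k(f^n x)\,M^n(x)$ forces the top-left blocks to multiply:
$$-b_{n+k}(x)s^-_{n+k}(x) = \bigl(-b_k(f^n x)s^-_k(f^n x)\bigr)\bigl(-b_n(x)s^-_n(x)\bigr).$$
Taking conorms gives $g_{n+k}(x) \geq g_k(f^n x) + g_n(x)$, so $(g_n)$ is superadditive. Assuming integrability of $g_1^-$ (verified separately), the superadditive version of Kingman's ergodic theorem and ergodicity of $\mu$ yield, for $\mu$-a.e.\ $x$, the convergence $\tfrac{1}{n}g_n(x) \to \chi$ where $\chi = \sup_n \tfrac{1}{n}\int g_n\,d\mu$.

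To conclude the lemma it suffices to establish $\chi \geq \tfrac{(1-\varepsilon)\log 2}{2N}$, i.e.\ $\int g_N\,d\mu \geq \tfrac{(1-\varepsilon)\log 2}{2}$. I would decompose over $A := J_\varepsilon \cap f^{-N}(J_\varepsilon)$, which has $\mu$-mass at least $1-2\varepsilon$. On $A$, the symplectic identity $b_N(-s^-_N){}^tb_N = (s^+_N\circ f^N)^{-1}$ combined with $m(-s^-_N)\geq \alpha/2$ and the uniform smallness of $\|s^+_N\circ f^N\|$ on $A$ (from Lemma \ref{LJ}) gives $m(b_N(x))\geq 1/\sqrt{\beta\|s^+_N\circ f^N\|}$, hence $g_N(x)\geq K$ on $A$, where $K$ can be made as large as desired by enlarging $N$ (which we are free to do since the paragraph before the lemma only requires $m(b_k)\geq 2/\alpha$ for $k\geq N$, a condition preserved under enlargement). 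On $A^c$, the measurable lower bound $g_N\geq -M$ holds on a full-measure subset because $-b_Ns^-_N$ is $\mu$-a.e.\ invertible. Choosing $N$ large enough that $K(1-2\varepsilon)-2\varepsilon M \geq \tfrac{(1-\varepsilon)\log 2}{2}$ gives the required bound on $\chi$; the Kingman convergence then produces $g_{j}(x)\geq \tfrac{(1-\varepsilon)j\log 2}{2N}$ for all sufficiently large $j$, and any such sequence is the desired $(j_k)$.

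The main obstacle is the integrability of the negative part of $g_N$, needed both for Kingman and to quantify the $-M$ loss on $A^c$. The difficulty is that $m(-b_Ns^-_N)$ is not uniformly bounded below off $J_\varepsilon$; I would handle it by slightly refining $J_\varepsilon$ (at the cost of a marginal loss in $\mu(J_\varepsilon)$, still $\geq 1-\varepsilon$ up to replacing $\varepsilon$ by $\varepsilon/2$ at the outset) so as to enforce an $L^1$ bound on $g_N|_{J_\varepsilon^c}$, or alternatively by using the symplectic relation $\det(-b_N s^-_N)\det(s^+_N b_N)=1$ to transfer conorm control between the two diagonal blocks and thereby keep the logarithms in $L^1(\mu)$.
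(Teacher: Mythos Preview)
Your approach via Kingman's theorem is genuinely different from the paper's, which uses only Birkhoff's ergodic theorem applied to the indicator of $J_\varepsilon$ together with the multiplicativity of the top-left block. The paper selects return times $0=k_1<k_2<\cdots<k_{n(\ell)}$ to $J_\varepsilon$ with gaps at least $N$, so that each factor $-b_{k_{i+1}-k_i}(f^{k_i}x)s^-_{k_{i+1}-k_i}(f^{k_i}x)$ has both endpoints in $J_\varepsilon$ and length $\geq N$, hence conorm $\geq 2$; the full product up to $k_{n(\ell)}$ is exactly the concatenation of these good factors, and Birkhoff controls $n(\ell)$. The point is that \emph{no} estimate on the ``bad'' part of the orbit is ever needed.

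Your argument, by contrast, requires controlling $g_N$ on $A^c = (J_\varepsilon\cap f^{-N}J_\varepsilon)^c$, and this is where the real gap lies. The claim that ``$g_N\geq -M$ on a full-measure subset because $-b_Ns^-_N$ is $\mu$-a.e.\ invertible'' is not valid: almost-everywhere invertibility gives $g_N>-\infty$ a.e., not a uniform lower bound. One can obtain a bound $g_N\geq -M_N$ from the boundedness of the change of basis and of $Df^{-N}$ on ${\rm supp}\,\mu$, but then $M_N$ grows linearly in $N$. Your inequality $K_N(1-2\varepsilon)-2\varepsilon M_N \geq \tfrac{(1-\varepsilon)\log 2}{2}$ therefore needs $K_N$ to outpace a linear term in $N$; yet Lemma \ref{LCVU} gives only $m(b_N)\to\infty$ uniformly with no rate, so there is no reason this inequality ever holds. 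The fixes you sketch (refining $J_\varepsilon$, or the determinant relation between the diagonal blocks) do not address this balance-of-rates issue. Ironically, the integrability of $g_1^-$ that you flag as the main obstacle is the easy part: the bounded change of basis and the compactness of ${\rm supp}\,\mu$ make $g_1$ bounded outright.

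In short, the Kingman route forces you to integrate over bad times and leaves you with a competition between $K_N$ and $M_N$ that you cannot win with the available information; the paper's Birkhoff-and-return-times argument never enters that competition.
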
 
 \demo
 As $\mu$ is ergodic for $f$, we deduce from Birkhoff ergodic theorem that for almost every point $x\in J_\varepsilon$, we have:
 $$\lim_{\ell\rightarrow +\infty}\frac{1}{\ell}\sharp \{ 0 \leq k\leq \ell-1; f^k(x)\in J_\varepsilon\}=\mu (J_\varepsilon)\geq 1-\varepsilon.$$
 We introduce the notation: $N(\ell)=\sharp \{ 0 \leq k\leq \ell-1; f^k(x)\in J_\varepsilon\}$.\\
 For such an $x$ and every $\ell\in\N$, we find a number $n(\ell)$ of integers:
 $$0=k_1\leq k_1+N\leq k_2\leq k_2+N\leq k_3\leq k_3+N\leq \dots \leq k_{n(\ell)}\leq \ell$$
 such that $f^{k_i}(x)\in J_\varepsilon$ and $n(\ell)\geq [\frac{N(\ell)}{N}]\geq \frac{N(\ell)}{N}-1$. In particular, we have: $\frac{n(\ell)}{\ell}\geq\frac{1}{N}(\frac{N(\ell)}{\ell}-\frac{N}{\ell})$, the right term converging to $\frac{\mu (J_\varepsilon)}{N}\geq \frac{1-\varepsilon}{N}$ when $\ell$ tends to $+\infty$. Hence, for $\ell$ large enough, we find: $n(\ell)\geq 1+  \ell \frac{1-\varepsilon}{2N}$.\\ 
As $f^{k_i}(x)\in J_\varepsilon$ and $k_{i+1}-k_i\geq N$, we have:  $m(b_{k_{i+1}-k_i}(f^{k_i}(x)))\geq \frac{2}{\alpha}$. Moreover, we have: $m(s_{k_{i+1}-k_i}^-(f^{k_i}x))\geq \alpha$; hence: 
$$m(b_{k_{i+1}-k_i}(f^{k_i}x)s_{k_{i+1}-k_i}^-(f^{k_i}x))\geq 2.$$
But the matrix $-b_{k_{n(\ell)}}(x)s^-_{k(n(\ell))}(x)$ is the product of $n(\ell)-1$
 such matrix. Hence:
 $$m(b_{k_{n(\ell)}}(x)s^-_{k(n(\ell))}(x))\geq 2^{n(\ell)-1}\geq 2^{\ell\frac{1-\varepsilon}{2N}}\geq \left( 2^\frac{1-\varepsilon}{2N}\right)^{k_{n(\ell)}}.
 $$
 
 \enddemo
Let us now come back to the whole tangent space $T_x\A_n$ with a slight  change in the coordinates that we use. We defined the symplectic coordinates $(q_1, \dots , q_n, p_1, \dots , q_n)$ and now we use the non symplectic ones: \\
$(Q_1, \dots, Q_n,P_1, \dots , P_n)=(q_{n-p+1}, \dots, q_n, q_1, \dots, q_{n-p}, p_1, \dots , p_n)$.  Then:
\begin{enumerate}
 \item[$\bullet$] $(Q_1, \dots , Q_p)$ are coordinates in $R(x)$;
 \item[$\bullet$] $(Q_1, \dots , Q_n)$ are coordinates in $G_+(x)$;
 \item[$\bullet$] $(Q_1, \dots , Q_n, P_{1}, \dots , P_{n-p})$ are coordinates of $E(x)=G_+(x)+G_-(x)$.
 \end{enumerate}
We write then the matrix of $Df^k(x)$  in these  coordinates $(Q_1, \dots , Q_n, P_1, \dots   , P_n)$ (which are not symplectic):
$$\begin{pmatrix}
A^1_k(x)&A^2_k(x)&A^3_k(x)&A^4_k(x)\\
0&b_k(x)s_k^-(x)&b_k(x)&A^5_k(x)\\
0&0& s_k^+(f^kx)b_k(x)&A^6_k(x)\\
0&0&0&A^7_k(x)\\
\end{pmatrix}$$
where the blocks correspond  to the decomposition $T_x\A_n=E_1(x)\oplus E_2(x)\oplus E_3(x)\oplus E_4(x)$ with $\dim E_1(x)=\dim E_4(x)=p$ and $\dim E_2(x)=\dim E_3(x)=n-p$.\\
We have noticed that $E_1(x)=E(x)\subset E^c(x)$ and that $G_+(x)=E_1(x)\oplus E_2(x)$.\\
If $x\in J_\varepsilon$, we have found a sequence $(j_k)$ of integers tending to $+\infty$ so that: 
 $$\forall k\in \N, m(b_{j_k}(x)s^-_{j_k}(x))\geq \left( 2^\frac{1-\varepsilon}{2N}\right)^{j_k}.$$
 We deduce:
 $$\forall v\in E_2(x)\backslash \{ 0\}, \frac{1}{j_k}\log\left( \| b_{j_k}(x)s^-_{j_k}(x)v\|\right)\geq  \frac{1-\varepsilon}{2N}\log 2 +\frac{\|v\|}{j_k};$$
 and because $E_1(x)\subset E^c(x)$: 
 $$\forall v\in G_+(x)\backslash E_1(x), \liminf_{k\rightarrow +\infty}\frac{1}{k}\log \| Df^k(x)v\|\geq \frac{1-\varepsilon}{2N}\log 2.$$
Hence there are at least $n-p$  Lyapunov exponents  bigger than $ \frac{1-\varepsilon}{2N}\log 2$ and then bigger than $0$ for the linearized dynamics. Because this dynamics is symplectic, we deduce that it has at least $n-p$ negative Lyapunov exponents (see \cite{BocVia1} ). As we noticed that the linearized flow has at least $2p$ zero Lyapunov exponents, we deduce that $\mu$ has   exactly  $n-p$ positive Lyapunov  exponents, exactly  $n-p$ negative Lyapunov exponents and  exactly $2p$ zero Lyapunov exponents.\\
This finishes the proof of theorem \ref{nbexp}.

\remk   Let us notice that we proved too that for $\mu$ almost every $x\in {\rm supp} \mu$, we have:  $E^u(x)\subset G_+(x)$, and then $G_+(x)=E^u(x)\oplus R(x) $.

\section{Size of the Lyapunov exponents and angle between the two Green bundles}\label{sAngles}
The idea to prove theorem \ref{sizeexp} is to use the reduced Green bundles that we introduced just before and to adapt the proof that we gave  in \cite{Arn4} in the case of weak hyperbolicity. We use the same notations as in section \ref{sGreen}.\\

The Lagrangian bundles $g_-$ and $g_+$ being transverse to the vertical at   every point of ${\rm supp}\mu$, there exist two symmetric matrices $\esse$ and $\U$ such that $g_-$ (resp. $g_+$) is the graph of $\esse$ (resp. $\U$) in the coordinates$(q_1, \dots , q_{n-p},   p_1, \dots , p_{n-p})$ that we defined at the beginning of subsection \ref{sshyp}. We denote by $(e_1, \dots , e_{2(n-p)})$ the associated symplectic basis. 
 As $g_-$ and $g_+$ are transverse $\mu$-almost everywhere,  we know that  there exists $\varepsilon>0$ such that $A_\varepsilon=\{ x\in {\rm supp}\mu; \U-\esse\geq \varepsilon {\bf 1}\}$ has positive $\mu$-measure. We use the notation $x_k=f^k(x)$.We may then assume that $x_0\in A_\varepsilon$ and that $\{ k\geq 0; \U (x_k)-\esse(x_k)>\varepsilon{\bf 1}\}$ is infinite. Let us notice that in this case, $g_-$ and $g_+$ are transverse along the whole orbit of $x_0$ (but $\U-\esse$ can be very small at some points of this orbit).  Let us note too that in fact $\U=0$. \\
Hence, for every $k\in\N$, there exists a unique positive definite matrix $S_0(x_k)$ such that: $S_0(x_k)^2=\U(x_k)-\esse (x_k)$. Let us recall that a matrix $M=\begin{pmatrix} a & b\\ c&d\\ \end{pmatrix}$ of dimension $2(n-p)$ is symplectic if and only if its entries satisfy the following equalities:
$${}^tac={}^tca;\quad {}^tbd={}^tdb;\quad {}^tda-{}^tbc={\bf 1}.$$
We define along the orbit of $x_0$ the following change of basis: 
$P=\begin{pmatrix} S_0^{-1}& S_0^{-1}\\ \esse S_0^{-1}& \U S_0^{-1}\\ \end{pmatrix}.$
Then it defines a symplectic change of coordinates, whose inverse is: 
$$Q=P^{-1}=\begin{pmatrix} 0&{\bf 1}\\ -{\bf 1}& 0\\ \end{pmatrix} {}^tP\begin{pmatrix} 0&-{\bf 1}\\ {\bf 1}& 0\\ \end{pmatrix}=\begin{pmatrix} S_0^{-1}\U & -S_0^{-1}\\ -S_0^{-1}\esse & S_0^{-1}\\ \end{pmatrix}.$$
We use this symplectic change of coordinates along the whole orbit of $x_0$. More precisely, if we denote the matrix of $M^k$ in the usual canonical basis $e=(e_i)$  by $M_k$,  then the matrix of $M^k$ in the basis $Pe=(Pe_i)$ is denoted by $\tilde M_k$; we have then: $\tilde M_k(x_h)= P^{-1}(x_{h+k})M_k(x_h)P(x_h)$. Note that the image of the horizontal (resp. vertical) Lagrangian plane by $P$ is $g_-$ (resp. $g_+$). As the bundles $g_-$ and $g_+$ are invariant by $M$, we deduce that $\tilde M_k=\begin{pmatrix} \tilde a_k&0 \\
 0 &\tilde d_k\\ \end{pmatrix}$; we have  ${}^t\tilde a_k \tilde d_k={\bf 1}$ because this matrix is symplectic.\\
 Moreover, we know that: $M_k(x_h)=\begin{pmatrix} -b_k(x_h)s_{-k}(x_h)& b_k(x_h)\\
 c_k(x_h)& s_k(x_{k+h})b_k(x_h)\\ \end{pmatrix}$ where $g_k(x_h)=M^k.v(x_{h-k})$ is the graph of $s_k(x_h)$.\\
 Writing that  $\tilde M_k(x_h)=\begin{pmatrix} \tilde a_k(x_h)&0 \\
 0 &\tilde d_k(x_h)\\ \end{pmatrix}= P^{-1}(x_{h+k})M_k(x_h)P(x_h)$, we obtain firstly: 
 $$\begin{matrix} S_0(x_{h+k})^{-1}&{}^tb_k(x_h)S_0(x_h)^{-1}=\hfill\\ &S_0(x_{h+k})^{-1}(\esse (x_{h+k})-s_{k}(x_{h+k}))b_k(x_h)(s_{-k}(x_h)-\esse (x_h))S_0(x_h)^{-1};
 \end{matrix}$$
 $$\begin{matrix}-S_0(x_{h+k})^{-1}&{}^tb_k(x_h)S_0(x_h)^{-1}\hfill\\
 &=S_0(x_{h+k})^{-1}(\U (x_{h+k})-s_{k}(x_{h+k}))b_k(x_h)(\U (x_h)-s_{-k}(x_h))S_0(x_h)^{-1}.
 \end{matrix}$$
  We deduce that:
 $\tilde a_k(x_h)=S_0(x_{h+k})b_k(x_h)(\esse (x_h)-s_{-k}(x_h))S_0(x_h)^{-1}$  and: \\
  $\tilde d_k(x_h)=S_0(x_{h+k})b_k(x_h)(\U (x_h)-s_{-k}(x_h))S_0(x_h)^{-1}$.
  
 Because of the changes of basis that we used, $(\tilde a_k(x_h))_k$ represents the linearized dynamics $(M^k_{|g_-(x_h)})_k$ restricted to $g_-$ and $(\tilde d_k(x_h))_k$ the linearized dynamics restricted to $g_+$.  Hence we need to study $(\tilde d_k(x_h))$ to obtain some information about the positive Lyapunov exponents of $\mu$. Let us compute:\\
 ${}^t\tilde d_k (x_h)=\tilde a_k(x_h)^{-1}= S_0(x_h)(\esse (x_h)-s_{-k}(x_h))^{-1}b_k(x_h)^{-1}S_0(x_{h+k})^{-1}$; we deduce:\\
 $${}^t\tilde d_k(x_h)\tilde d_k(x_h)=S_0(x_h)(\esse (x_h)-s_{-k}(x_h))^{-1}(\U (x_h)-s_{-k}(x_h))S_0(x_h)^{-1}$$
 $$=S_0(x_h)(\esse (x_h)-s_{-k}(x_h))^{-1}(\U (x_h)-\esse (x_h)+\esse (x_h)-s_{-k}(x_h))S_0(x_h)^{-1}$$
 $$={\bf 1}+ S_0(x_h)(\esse (x_h)-s_{-k}(x_h))^{-1} S_0(x_h)$$
 $$={\bf 1}+(\U (x_h)-\esse (x_h))^\frac{1}{2}(\esse (x_h)-s_{-k}(x_h))^{-1} (\U (x_h)-\esse (x_h))^\frac{1}{2}.$$
 Let us denote the conorm of $a$ (for the usual Euclidean norm of $\R^{n-p}$) by: $m(a)=\| a^{-1}\|^{-1}$. Then we have:
 $$m(\tilde d_k(x_h))^2=m({}^t\tilde d_k(x_h)\tilde d_k(x_h));$$
 Let us recall that on ${\rm supp}\mu$,    $G_+$ is uniformly far from the vertical.  This implies that $S_1-S_{-1}$ is uniformly bounded on the (compact) support of $\mu$ (see the notations before theorem \ref{sizeexp} for the definition of $S_1$ and $S_{-1}$). Then their restriction to $q_{n-p+1}=\dots =q_n=0$ is uniformly bounded too (by the same constant);  let $C$ designate  $\sup \| s_1-s_{-1}\|$ above the  support of $\mu$. We have then:  $m(\tilde d_k(x_h))^2\geq 1+\frac{1}{C}m((\U-\esse )(x_h))$; indeed, we know that: $s_1-s_{-1}\geq \esse -s_{-k} >0$.
 
 The entry $\tilde d_k$ being multiplicative, we deduce that:
 $$m(\tilde d_k(x_0))^2\geq \prod_{n=0}^{k-1} (1+\frac{1}{C}m(\U (x_h)-\esse (x_h)))$$
 and: 
 $$\frac{1}{k}\log m(\tilde d_k(x_0))\geq \frac{1}{2k}\sum_{n=0}^{k-1}\log (1+\frac{1}{C}m(\U (x_h)-\esse (x_h))).$$
When $k$ tends to $+\infty$, we deduce from Birkhoff's ergodic theorem that: 
$$(*)\quad \liminf_{k\rightarrow \infty}\frac{1}{k}\log m(\tilde d_k(x_0))\geq\frac{1}{2}\int \log\left( 1+\frac{1}{C}m(\U (x)-\esse (x))\right) d\mu (x).$$
Let us recall that $(\tilde d_k(x_0))$ represents the dynamics along $g_+$, but the change of basis that we have done is not necessarily bounded. To obtain a true information about the Lyapunov positive exponents of $(M^k)$ , we need to have a result for the matrix $D_k$ of $(M^k_{|g_+(x_0)})$ in the basis $(e_1, \dots , e_{n-p})$ of $g_+$ whose matrix in the usual coordinates is: $\begin{pmatrix} {\bf 1}\\ \U \\ \end{pmatrix}=\begin{pmatrix} {\bf 1}\\ {\bf 0} \\ \end{pmatrix}$. Since $(\tilde d_k)$ is the matrix of $M^k$ in the basis whose matrix is $\begin{pmatrix} S_0^{-1}\\ \U S_0^{-1} \\ \end{pmatrix}=\begin{pmatrix} S_0^{-1}\\ {\bf 0} \\ \end{pmatrix}$, we deduce that:  $D_k(x_0)=S_0(x_k)\tilde d_k(x_0)S_0(x_0)^{-1}$ and:\\
 $m(D_k(x_0))\geq m(S_0(x_k)) m(\tilde d_k(x_0)) m(S_0(x_0)^{-1})=\left( m(\U(x_k)-\esse (x_k))\right)^\frac{1}{2} m(\tilde d_k(x_0)) m(S(x_0)^{-1})$. \\
 We have $(*)$ and we know that: $\displaystyle{\liminf_{k\rightarrow \infty} m(\U (x_k)-\esse (x_k))\geq \varepsilon}$. We deduce: 
$$\lambda (\mu)\geq  \liminf_{k\rightarrow \infty}\frac{1}{k}\log m(D_k(x_0))\geq\frac{1}{2}\int \log\left( 1+\frac{1}{C}m(\U (x)-\esse (x))\right) d\mu (x).$$

Because $\Delta S$ is a symmetric positive semi-definite matrix, we have: $q_+(\Delta S)=q_+(\Delta S_{|(\ker \Delta S)^\bot})$. If we look at the definition of the coordinates $(q_i, p_i)$, we note that: $\Delta S_{|(\ker \Delta S)^\bot}=-\esse=\U-\esse$.  Hence we have proved theorem \ref{sizeexp}.

  \section{Shape of the support of the minimizing measures and Lyapunov exponents}\label{sKAM}
  
  \subsection{Some reminders about discrete weak KAM theory}\label{ssweakkam}
  The general reference for what is contained in this section is the article of Garibaldi \& Thieullen  \cite{GarThi} and the results that they obtain are very similar to the ones obtained by A.~Fathi in the setting of the time-continuous weak K.A.M. theory (see \cite{Fat1}). The dynamics that we study here are contained in the ones that they study and that are called ``ferromagnetic''. In \cite{GarThi}, a big part of the article deals with a Lagrangian function $L:\R^n\times \R^n\rightarrow \R$ that is defined by $L(x, v)=S(x, x+v)$ (let us recall that $S$ is a generating function for $F$) and the action $\Fc$ is denoted by $\Lc$ by them.
  They prove the existence of a unique $\overline \Lc\in \R$ such that there exists two $\Z^n$-periodic continuous functions $u_-, u_+:\R^n\rightarrow \R$ such that:
  $$\forall x\in\R^n, u_-(y)=\inf_{x\in\R^n} u_-(x)+S(x,y)-\overline\Lc\quad{\rm and}\quad u_+(x)=\sup_{y\in\R^n} u_+(y)-S(x,y)+\overline\Lc$$
  and that the infimum (resp. supremum) is attained at some point.  
    \begin{propo}{\em (Garibaldi-Thieullen)}  \\
  With the above notations and assumptions:
  $$\bar \Lc=\inf _\mu \int_{\R^n\times\R^n} S(x,y)d\tilde\mu(x,y);$$
  where the infimum is taken on the set of the Borel probability measures that are invariant by $f$ and $\tilde \mu$ is any lift of $\mu$ to a fundamental domain of $\R^n\times \R^n$ for the projection $(x,y)\mapsto (x, -\frac{\partial S}{\partial x}(x,y))$ onto $\T^n\times \R^n$. Moreover the infimum is attained for some invariant $\mu$.
  
  \end{propo}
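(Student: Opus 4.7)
The plan is to establish two inequalities whose conjunction gives the equality, together with the attainment statement. Namely: (i) for every $f$-invariant Borel probability measure $\mu$ and every lift $\tilde\mu$ as in the statement, $\int S\,d\tilde\mu \geq \bar\Lc$; and (ii) there exists an $f$-invariant measure achieving $\int S\,d\tilde\mu = \bar\Lc$.

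For (i), the key is to integrate the pointwise weak KAM inequality against $\tilde\mu$. By definition of $u_-$, for every $x,y\in\R^n$,
$$u_-(y) \leq u_-(x) + S(x,y) - \bar\Lc.$$
The right hand side integrates to $\int S\,d\tilde\mu - \bar\Lc$. On the left, the $\Z^n$-periodicity of $u_-$ means only the pushforwards of the two coordinate projections modulo $\Z^n$ matter; and under the identification $(x,y)\mapsto\bigl(x,-\frac{\partial S}{\partial q}(x,y)\bigr)$ with $\T^n\times\R^n$, $f$-invariance of $\mu$ forces both of these $\T^n$-marginals to equal the $\pi$-pushforward of $\mu$. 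Hence the left hand side vanishes and (i) follows.

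For (ii), the approach is to construct a calibrated backward sequence and then appeal to Krylov--Bogolyubov. Fix any $y_0\in\R^n$. Using that the infimum in $u_-(y) = \inf_x\bigl(u_-(x)+S(x,y)-\bar\Lc\bigr)$ is attained, I recursively pick $y_{-k-1}$ so that
$$u_-(y_{-k}) = u_-(y_{-k-1}) + S(y_{-k-1},y_{-k}) - \bar\Lc.$$
The remark following the definition of globally positive diffeomorphism identifies $\bigl(y_{-k},-\frac{\partial S}{\partial q}(y_{-k},y_{-k+1})\bigr)$ with a genuine $F$-orbit, which projects to an $f$-orbit on $\T^n\times\R^n$. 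I then form the empirical measures
$$\nu_N = \frac{1}{N}\sum_{k=0}^{N-1} \delta_{\bigl(y_{-k},\,-\frac{\partial S}{\partial q}(y_{-k},y_{-k+1})\bigr)},$$
take a weak-$*$ subsequential limit $\mu$ (automatically $f$-invariant), and check via telescoping that $\int S\,d\tilde\nu_N = \bar\Lc + \tfrac{1}{N}\bigl(u_-(y_0)-u_-(y_{-N})\bigr)$, whose limit is $\bar\Lc$ since $u_-$ is bounded on $\T^n$.

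The main obstacle will be the tightness needed to extract a weak-$*$ limit: one must rule out drift of the points $\bigl(y_{-k},-\frac{\partial S}{\partial q}(y_{-k},y_{-k+1})\bigr)$ to infinity in the $p$-direction. This is precisely where the uniform twist hypothesis $\frac{\partial^2 S}{\partial q\partial Q}\leq -\alpha\mathbf{1}$ and the periodicity of $u_-$ come into play: together with compactness of $\T^n$, they confine calibrated pairs $(y_{-k-1},y_{-k})$ to lie in a $\Z^n$-translate of a fixed compact set of $\R^n\times\R^n$, providing the coercivity required for Krylov--Bogolyubov. Once this compactness is in place, the combination of (i) with the limiting identity for $\nu_N$ closes the proof and simultaneously produces a minimizer.
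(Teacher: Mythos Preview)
The paper does not give its own proof of this proposition: it is quoted verbatim as a result of Garibaldi--Thieullen \cite{GarThi} and the text moves on immediately. So there is nothing to compare your argument against in the paper itself.

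That said, your outline is the standard weak KAM argument and is essentially correct in the paper's logical order, since the existence of the calibrated subaction $u_-$ (and of the constant $\bar\Lc$) is stated just before the proposition as an already-established fact. Part (i) is clean: the identification $(x,y)\mapsto\bigl(x,-\tfrac{\partial S}{\partial q}(x,y)\bigr)$ sends the second marginal of $\tilde\mu$ to $(\pi\circ f)_*\mu$, and $f$-invariance plus $\Z^n$-periodicity of $u_-$ kill the telescoped integral. For part (ii) your Krylov--Bogolyubov construction is the right one, and you have correctly isolated the only nontrivial point, namely tightness. One caution: the twist inequality $\tfrac{\partial^2 S}{\partial q\partial Q}\leq -\alpha\mathbf{1}$ alone does not literally force coercivity of $S$ in $Q-q$; what you actually use is that along a calibrated chain $S(y_{-k-1},y_{-k})=u_-(y_{-k})-u_-(y_{-k-1})+\bar\Lc$ is uniformly bounded (by boundedness of $u_-$ on $\T^n$), and then the ``ferromagnetic'' hypotheses of \cite{GarThi}---which the paper explicitly says encompass the present setting---yield that bounded action confines $y_{-k}-y_{-k-1}$ modulo $\Z^n$, hence bounds $p_{-k}$. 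With that compactness in hand, continuity of $S$ on the resulting compact gives $\int S\,d\tilde\mu=\lim\int S\,d\tilde\nu_N=\bar\Lc$, closing the argument.
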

  Then such a measure $\mu$ where the infimum is attained  is a minimizing measure, but all the minimizing measures are not like that.  We define:
  
  \begin{defin}
  A configuration $(x_k)_{k\in\Z}$ of points of $\R^n$  is {\em strongly minimizing} if for any pairs $m<\ell$ et $m'<\ell'$ and any configuration $(y_k)_{k\in\Z}$ satisfying $y_{m'}-x_m\in\Z^n$ and $y_{\ell'}-x_\ell\in\Z^n$, we have:
  $$\bar \Fc (x_m, x_{m+1}, \dots, x_\ell)\leq \bar \Fc(y_{m'}, \dots, y_{\ell'}).$$
  The corresponding orbit for $f$ is then {\em strongly minimizing}.
  \end{defin}
  It is not hard to see that if $\mu$ is a Borel probability measure invariant by $f$ then it satisfies the equality in the proposition above if and only if its support is filled by strongly minimizing orbits.
  
  \begin{nota}
  The union of the supports of all the measures $\tilde\mu$  where $\mu$ is strongly minimizing   is called the {\em Mather set} and is denoted by $\Mc(S)$.\end{nota}
  
  \begin{nota}
  We introduce the notations $\bar S(x,y)=S(x,y)-\bar\Lc$ and $$\bar\Fc(x_1, \dots x_m)=\Fc(x_1, \dots, x_m)-(m-1)\bar\Lc=\sum_{i=1}^{m-1}\bar S(x_i, x_{i+1}).$$
  \end{nota}
    
  From now on, we will call $\bar\Fc$ the action and we will consider minimizing orbits for this action (in fact minimizing orbits are the same for the two actions).
  
  \begin{defin} Let $u:\R^n\rightarrow \R$ be a $\Z^n$-periodic and continuous function.  Then
  \begin{enumerate}
  \item $u$ is a {\em subaction} with respect to $S$ if:
  $$\forall x, y\in\R^n, u(y)-u(x)\leq \bar S(x,y);$$
  \item $u$ is {\em backward calibrated} if it is a subaction and
  $$\forall y\in\R^n, u(y)=\inf_{x\in\R^n}(u(x)+\bar S(x, y));$$
  \item $u$ is {\em forward calibrated} if it is a subaction and
  $$\forall x\in\R^n, u(x)=\sup_{y\in\R^n}(u(y)-\bar S(x, y)).$$
 \end{enumerate}
  
  \end{defin}   
  \begin{defin}
Let $K\geq 0$ be  a constant.  A function $u:\R^n\rightarrow \R$ is $K$-{\em semiconcave}  if for every $x_0\in\R^N$, there exists $p_{x_0}\in\R^n$ (that is non-necessarily unique) such that:
$$\forall x\in \R^n, \| x-x_0\|\leq 1\Rightarrow   u(x)\leq u(x_0)+p_{x_0}(x-x_0)+K\| x-x_0\|^2.$$
Then $p_{x_0}$ is a {\em superdifferential} for $u$ at $x_0$.
The function $u$ is $K$-{\em semiconvex} if $-u$ is semiconcave. 
  \end{defin}
  Let us recall some well-known properties of semiconcave functions (see for example \cite{CanSin}); we assume that $u$ is $K$-semiconcave.
  \begin{enumerate}
  \item[$\bullet$] if $x_0$ is a local minimizer for $u$, then $u$ is differentiable at $x_0$;
  \item[$\bullet$] a infimum of $K$-semiconcave functions is $K$-semiconcave;
  \item[$\bullet$] every semiconcave function is Lipschitz.
  \end{enumerate}  
  A consequence of these properties is that any backward calibrated subaction is semiconcave and any forward calibrated subaction  is semiconvex.
  
  \begin{nota}
  If $u:\R^n\times \R^n\rightarrow \R$ is a subaction, then:
  $$\Nc(u)=\{ (x,y)\in\R^n\times\R^n; u(y)=u(x)+\bar S(x,y)=u(x)+S(x,y)-\bar\Lc\}.$$
  \end{nota}
  
  \begin{remk}
  Note that for every $(x, y)\in \Nc(u)$, then $u$ is differentiable at $x$ and $du(x)=-\frac{\partial S}{\partial x}(x,y)$. Indeed, the map $ (z\mapsto u(z)+\bar S(z, y))$ is semiconcave and $x$ is a minimizer. Hence $u$ is differentiable a $x$ and $du(x)+\frac{\partial S}{\partial x}(x,y)=0$.
  \end{remk}

 Let us give a result that is very similar to a one given in \cite{Ber1} in the time-continuous case.
 
 \begin{nota}
 If $u_-:\R^n\rightarrow \R$ is a backward calibrated subaction, then for every $y\in \R^n$, we denote by $\Sigma (y)$ the set of the $x\in \R^n$ where:
 $$u_-(y)=u_-(x)+\bar S(x,y).$$
 \end{nota}
 
 \begin{prop}\label{Pbackward}
 Let $u_-:\R^n\rightarrow \R$ be a backward calibrated subaction. Then, if $y\in\R^n$ and $x\in \Sigma (y)$, $\frac{\partial S}{\partial y}(x,y)$ is a superdifferential for $u_-$ at $y$.\\
 Moreover, $u_-$ is differentiable at $y$  if and only if $\Sigma(y)=\{ x\}$ has exactly one element. Then in this case $du_-(x)=-\frac{\partial S}{\partial x}(x, y)$ and $du_-(y)=\frac{\partial S}{\partial y}(x,y)$.
 \end{prop}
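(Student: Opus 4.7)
The plan is to handle the two statements separately. For the first statement, fix $x \in \Sigma(y)$ and an arbitrary $y' \in \R^n$. Combining the subaction inequality $u_-(y') \leq u_-(x) + \bar S(x,y')$ with the calibration equality $u_-(y) = u_-(x) + \bar S(x,y)$ yields
\[
u_-(y') - u_-(y) \;\leq\; \bar S(x, y') - \bar S(x, y) \;=\; S(x, y') - S(x, y).
\]
Since $S$ is $C^2$, Taylor expanding in the $y$-variable gives $S(x, y') - S(x, y) \leq \frac{\partial S}{\partial y}(x, y)\cdot(y' - y) + K \|y' - y\|^2$ for $\|y' - y\| \leq 1$, where $K$ controls $\|\frac{\partial^2 S}{\partial y^2}\|$ on a neighborhood of $(x, y)$. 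This shows $\frac{\partial S}{\partial y}(x, y)$ is a superdifferential of $u_-$ at $y$.

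For the forward direction of the equivalence, assume $u_-$ is differentiable at $y$. The semiconcavity of $u_-$ forces the whole superdifferential $D^+u_-(y)$ to reduce to $\{du_-(y)\}$, so for any two $x, x' \in \Sigma(y)$ the first part gives $\frac{\partial S}{\partial y}(x,y) = \frac{\partial S}{\partial y}(x',y) = du_-(y)$. The globally positive hypothesis, via the identity $P = \frac{\partial S}{\partial Q}(q, Q)$ and the fact (recalled in the remark following the definition) that $\pi \circ F^{-1}$ restricts to a diffeomorphism on every fiber $\{Q\} \times \R^n$, is exactly the statement that for fixed $y$ the map $x \mapsto \frac{\partial S}{\partial y}(x, y)$ is a diffeomorphism, hence injective. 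Therefore $x = x'$ and $\Sigma(y)$ is a singleton.

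Conversely, assume $\Sigma(y) = \{x\}$. By semiconcavity it suffices to check that $D^+u_-(y)$ is reduced to a single element, and for semiconcave functions $D^+u_-(y)$ is the closed convex hull of the reachable gradients, i.e. the limits $p = \lim_n du_-(y_n)$ along sequences $y_n \to y$ of differentiability points of $u_-$. Given such a sequence, select $x_n \in \Sigma(y_n)$; differentiability at $y_n$ combined with the first part forces $du_-(y_n) = \frac{\partial S}{\partial y}(x_n, y_n)$. A standard boundedness argument (leveraging the $\Z^n$-periodicity of $u_-$ together with the coercive behavior of $\bar S$) shows that the $(x_n)$ stay in a bounded set, so after extraction $x_n \to x^*$; continuity of $u_-$ and $\bar S$ then yields $u_-(y) = u_-(x^*) + \bar S(x^*, y)$, i.e. $x^* \in \Sigma(y) = \{x\}$, and continuity of $\frac{\partial S}{\partial y}$ delivers $p = \frac{\partial S}{\partial y}(x, y)$. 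Hence $D^+u_-(y) = \{\frac{\partial S}{\partial y}(x, y)\}$, so $u_-$ is differentiable at $y$ with $du_-(y) = \frac{\partial S}{\partial y}(x, y)$. The companion identity $du_-(x) = -\frac{\partial S}{\partial x}(x, y)$ is already recorded in the remark preceding the proposition, since $x$ minimizes the semiconcave function $z \mapsto u_-(z) + \bar S(z, y)$.

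The main obstacle I anticipate is the uniform boundedness of the selectors $x_n \in \Sigma(y_n)$ used in the backward direction: one must justify that they can be kept in a region independent of $n$, exploiting the $\Z^n$-periodicity of $u_-$ together with the structure of the generating function at infinity. Once that boundedness is in hand, the remainder is a routine application of the semiconcave calculus and of the twist (fiberwise-diffeomorphism) property inherent to the globally positive hypothesis.
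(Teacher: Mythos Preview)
Your proof is correct. The first part (superdifferential) and the forward implication (differentiability $\Rightarrow$ $\Sigma(y)$ singleton) match the paper's argument essentially verbatim: subaction inequality plus calibration plus Taylor expansion for the former, and injectivity of $x\mapsto \frac{\partial S}{\partial y}(x,y)$ from the twist for the latter.

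For the backward implication ($\Sigma(y)=\{x\}$ $\Rightarrow$ differentiability) you take a genuinely different route. The paper produces a direct subdifferential-type lower bound: for $y'$ near $y$ and $x'\in\Sigma(y')$ it writes
\[
u_-(y')=u_-(x')+\bar S(x',y)+\bigl(\bar S(x',y')-\bar S(x',y)\bigr)\geq u_-(y)+\tfrac{\partial S}{\partial y}(x,y)(y'-y)+o(\|y'-y\|),
\]
which together with the already-established upper bound gives differentiability at $y$ by elementary calculus. You instead invoke the semiconcave-calculus fact that $D^+u_-(y)$ is the closed convex hull of reachable gradients and show every reachable gradient equals $\frac{\partial S}{\partial y}(x,y)$. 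Both approaches rely on the same compactness ingredient you single out---that selectors $x'\in\Sigma(y')$ stay bounded and hence accumulate on $\Sigma(y)$---which the paper uses without comment in the line ``every element $x'$ of $\Sigma(y')$ is close to $x$''. Your argument buys a clean structural picture (and generalizes to any setting where the reachable-gradient characterization is available), while the paper's computation is more self-contained and avoids citing the Cannarsa--Sinestrari machinery.
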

 There is of course a similar statement for the forward calibrated subactions.
 \begin{proof}
 Assume that $x\in\Sigma (y)$. Then if $z\in \R^n$ satisfies $\| z-x\| \leq 1$, we have:
 $$\begin{matrix}u_-(z)&\leq u_-(x)+S(x,z) \leq u_-(x)+S(x, y)+(S(x,z)-S(x,y))\hfill\\
 &\leq 
  u_-(y)+S(x,z)-S(x,y)\leq u_-(y)+\frac{\partial S}{\partial y}(x,y)(z-y)+K\| z-y\|^2.\end{matrix}$$
  Hence $\frac{\partial S}{\partial y}(x,y)$ is a superderivative for $u_-$ at $y$.\\
  Assume that $\Sigma (y)$ has at least two elements $x_1$ and $x_2$. Then $\frac{\partial S}{\partial y}(x_1,y)$ and $\frac{\partial S}{\partial y}(x_2,y)$ are two superderivatives for $u_-$ at $y$. Because of the twist condition and because $x_1\not=x_2$, we have $\frac{\partial S}{\partial y}(x_1,y)\not= \frac{\partial S}{\partial y}(x_2,y)$. The function $u_-$ has then two superderivatives at $y$ and then is not differentiable at $y$.\\
  Assume now hat $\Sigma (y)=\{ x\}$ has exactly one element. Let $y'$ be close to $y$. Then every element $x'$ of $\Sigma (y')$ is close to $x$ and we have:
  $$\begin{matrix} u_-(y')&=u(x')+S(x', y')=u(x')+S(x', y)+(S(x', y')-S(x', y))\hfill\\&\geq u_-(y)+\frac{\partial S}{\partial y}(x', y)(y'-y)+o(\| y'-y\|)\hfill\\
  &\geq u_-(y)+\frac{\partial S}{\partial y}(x, y)(y'-y)+o(\| y'-y\|).\hfill\end{matrix}.$$
  This proves that $u$ is differentiable at $y$ and that $du(y)= \frac{\partial S}{\partial y}(x,y)$.  The fact that $du(x) =-\frac{\partial S}{\partial x}(x,y)$ is a consequence of the remarks that we made previously.
 
\end{proof}
 \begin{remk}
We deduce from  proposition \ref{Pbackward} that if a backward calibrated subaction $u_-:\R^n\rightarrow \R$ is differentiable at  $x_0$, if we use the notation $\Sigma (x_i)=\{ x_{i+1}\}$, then $u$ is differentiable at every $x_i$ and $du(x_i)=-\frac{\partial S}{\partial x}(x_i,x_{i-1})=\frac{\partial S}{\partial y}(x_{i+1}, x_{i})$, i.e. $(x_i, du(x_i))_{i\in\N}=(x_i, \frac{\partial S}{\partial y}(x_{i+1},x_i))_{i\in\N}$ is a backward orbit for $F$. Moreover, the configuration $(x_i)_{i\geq 0}$ is strongly minimizing.
 \end{remk}

 \begin{propo} {\rm (Garibaldi-Thieullen)} For any subaction $u$, we have: $\emptyset\not=\Mc(S)\subset \Nc(u)$.
  
  \end{propo}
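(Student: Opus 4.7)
The plan is to exploit the variational characterization of $\bar\Lc$ provided by the Garibaldi-Thieullen proposition together with the sign condition that defines a subaction.

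First, the non-emptiness of $\Mc(S)$ is immediate: the Garibaldi-Thieullen proposition asserts that the infimum defining $\bar\Lc$ is attained by some invariant probability measure $\mu$, and such $\mu$ is (the projection of) a strongly minimizing measure, so $\Mc(S)\neq\emptyset$.

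Next, I fix a subaction $u:\R^n\rightarrow\R$ and a strongly minimizing measure $\mu$ with lift $\tilde\mu$ to a fundamental domain of $\R^n\times\R^n$. By the definition of strongly minimizing,
\[
\int_{\R^n\times\R^n} S(x,y)\,d\tilde\mu(x,y) = \bar\Lc, \quad\text{hence}\quad \int \bar S(x,y)\,d\tilde\mu(x,y) = 0.
\]
The crucial step is to show that, because $u$ is $\Z^n$-periodic and $\mu$ is $f$-invariant, the two ``marginals'' of $\tilde\mu$ integrate $u$ to the same value, i.e.
\[
\int u(x)\,d\tilde\mu(x,y) \;=\; \int u(y)\,d\tilde\mu(x,y).
\]
This is where a small amount of care is needed: the point $(x,y)$ corresponds to a pair $(q,Q)$ of consecutive projections of an orbit, so pushing $\tilde\mu$ forward by the first and by the second coordinate yields measures on $\T^n$ that are, respectively, $\pi_*\mu$ and $(\pi\circ f)_*\mu$; by invariance of $\mu$ under $f$ these coincide, and periodicity of $u$ lets one integrate against the $\T^n$-measure on either side.

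Combining these two identities, I obtain
\[
\int_{\R^n\times\R^n}\bigl[\bar S(x,y)-(u(y)-u(x))\bigr]\,d\tilde\mu(x,y)=0.
\]
The integrand is non-negative by the subaction inequality, so it vanishes $\tilde\mu$-almost everywhere. Since it is continuous, it vanishes on the entire support of $\tilde\mu$, which means $\operatorname{supp}\tilde\mu\subset \Nc(u)$. Taking the union over all strongly minimizing $\mu$ gives $\Mc(S)\subset \Nc(u)$.

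The only genuinely non-routine point is the marginal identity above; the rest is a direct combination of the variational formula for $\bar\Lc$, the sign convention of a subaction, and continuity. No invocation of the calibrated subactions $u_\pm$ or of proposition \ref{Pbackward} is needed for this argument — the statement holds for every subaction, not merely the calibrated ones.
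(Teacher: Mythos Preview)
The paper does not supply its own proof of this proposition: it is quoted as a result of Garibaldi--Thieullen from \cite{GarThi}, alongside the other ``reminders'' in subsection~\ref{ssweakkam}. So there is no proof in the paper to compare against.

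That said, your argument is the standard one and is correct. Non-emptiness follows directly from the cited variational proposition (the infimum is attained), together with the paper's remark that a measure attains the infimum if and only if its support consists of strongly minimizing orbits. For the inclusion, the only point requiring care is the marginal identity $\int u(x)\,d\tilde\mu=\int u(y)\,d\tilde\mu$, and you handle it correctly: under the lift $(x,y)\mapsto\bigl(x,-\tfrac{\partial S}{\partial x}(x,y)\bigr)$, the first coordinate of $(x,y)$ projects to $\pi$ and the second to $\pi\circ f$, so the two push-forwards to $\T^n$ agree by $f$-invariance of $\mu$, and $\Z^n$-periodicity of $u$ lets you descend. The integrand $\bar S(x,y)-(u(y)-u(x))$ is continuous (since $S$ is $C^2$ and $u$ is continuous by definition of a subaction) and non-negative, so vanishing integral forces vanishing on the support, which is exactly the inclusion $\operatorname{supp}\tilde\mu\subset\Nc(u)$.
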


  Moreover they prove:
  \begin{propo} {\rm (Garibaldi-Thieullen)} For any backward calibrated subaction $u_-$, there exists a forward calibrated subaction $u_+$ such that:
  \begin{enumerate}
  \item $u_-\leq u_+$;
  \item $u_{-|\Mc(S)}=u_{+|\Mc(S)}.$
  \end{enumerate}
  \end{propo}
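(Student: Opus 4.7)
My plan is to follow the classical pattern of Fathi's construction of conjugate weak KAM solutions, adapted to the discrete globally-positive setting. The central analytic tool is the Peierls barrier
$$h(y,x) \;=\; \liminf_{n\to\infty} \bar\Fc_n(y,x).$$
First I would establish its fundamental properties: (i) finite-valuedness and uniform Lipschitz regularity in each variable, via the twist hypothesis, the $C^2$-regularity of $S$, and reduction to a fundamental domain by $\Z^n$-periodicity; (ii) subadditivity $h(y,x)\leq h(y,z)+h(z,x)$ and the one-step recursions $h(y,x)=\inf_z(\bar S(y,z)+h(z,x))=\inf_z(h(y,z)+\bar S(z,x))$; (iii) the key vanishing on the diagonal over the Mather set, $h(x,x)=0$ for every $x\in\pi(\Mc(S))$. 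Property (iii) is the most delicate: the inequality $h(x,x)\geq 0$ is immediate from any calibrated subaction, and the reverse uses $\int \bar S\,d\tilde\mu=0$ from the preceding Garibaldi--Thieullen proposition, together with Birkhoff's ergodic theorem applied to a strongly minimizing orbit through $x$, the recurrence of its projection to $\T^n$, and the $\Z^n$-periodicity of $\bar S$ to produce closed loops through $x$ whose actions tend to $0$.

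Next I would define $u_+$ as the minimal subaction dominating $u_-$ on the projected Mather set:
$$u_+(x)\;=\;\inf\{\,v(x)\,:\,v\text{ is a subaction with }v\geq u_-\text{ on }\pi(\Mc(S))\,\}.$$
Properties (1) and (2) of the statement would then be verified as follows. For (1), I would argue that \emph{any} admissible $v$ in fact satisfies $v\geq u_-$ everywhere, not merely on $\pi(\Mc(S))$: given any $x\in\R^n$, let $(x_{-k})_{k\geq 0}$ be the backward-calibrating orbit of $u_-$ with $x_0=x$; standard Aubry--Mather arguments show that the projection of this orbit to $\T^n$ accumulates on $\pi(\Mc(S))$. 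Using $\Z^n$-periodicity of $u_-, v$, and $\bar S$, the subaction property of $v$, and the calibration equality $u_-(x)=u_-(x_{-k})+\bar\Fc_k(x_{-k},x)$, one passes to the limit along a suitable subsequence to conclude $v(x)\geq u_-(x)$. Taking the infimum gives $u_+\geq u_-$. Property (2) is then immediate: $u_-$ itself is admissible, so $u_+\leq u_-$ on $\pi(\Mc(S))$, and combined with (1) this yields equality there.

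The main obstacle is to check that $u_+$ is genuinely \emph{forward calibrated} and not merely a subaction. This I would handle by a ``minimality implies calibration'' perturbation argument. Suppose, toward contradiction, that $u_+$ were not forward calibrated at some $x_0\notin\pi(\Mc(S))$: then the gap $\varepsilon:=u_+(x_0)-\sup_y(u_+(y)-\bar S(x_0,y))$ is strictly positive. Defining $u'(x)=u_+(x)$ for $x\neq x_0$ and $u'(x_0)=u_+(x_0)-\varepsilon$ produces a strictly smaller function still dominating $u_-$ on $\pi(\Mc(S))$ (since $x_0$ lies outside this set), and still a subaction: the only constraints that could be sharpened are those of the form $u'(y)\leq u'(x_0)+\bar S(x_0,y)$, and these hold by the very definition of $\varepsilon$; the reverse constraints $u'(x_0)\leq u'(x)+\bar S(x,x_0)$ are only loosened. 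This contradicts the minimality defining $u_+$, so $u_+$ must be forward calibrated at every point outside $\pi(\Mc(S))$; calibration at points of the Mather set follows by compactness and continuity of $\bar S$. The two technical cores of the argument are thus the $\alpha$-limit propagation (which rests on the Garibaldi--Thieullen characterization of $\pi(\Mc(S))$ via minimizing measures) and the careful handling of lower semicontinuity of $u_+$ required to make the pointwise perturbation legitimate.
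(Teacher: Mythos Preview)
The paper does not give a proof of this proposition: it is quoted from Garibaldi--Thieullen \cite{GarThi} and used as a black box. There is therefore nothing in the paper to compare your attempt against, and I assess it on its own merits.

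The forward-calibration step contains a genuine gap. Your competitor $u'$, obtained by lowering $u_+$ at the single point $x_0$, is discontinuous, while in this setting subactions are by definition continuous $\Z^n$-periodic functions; hence $u'$ is not admissible in the infimum defining $u_+$ and no contradiction arises. You flag this as a ``careful handling of lower semicontinuity,'' but the obstruction is structural: any continuous perturbation must lower $u_+$ on a whole neighborhood of $x_0$, and then the subaction inequalities $u'(y)\leq u'(x)+\bar S(x,y)$ for $x$ near $x_0$ are no longer automatic---establishing them would require a uniform gap between $u_+$ and its forward Lax--Oleinik image on that neighborhood, which is precisely the calibration information you are trying to prove. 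A smaller but telling point is that you develop the Peierls barrier $h$ in detail and then never use it. In fact the clean route goes precisely through $h$: one checks that your infimum is realized by the explicit formula $u_+(x)=\sup_{a\in\pi(\Mc(S))}\bigl(u_-(a)-h(x,a)\bigr)$, and then forward calibration follows in one line from the recursion $h(x,a)=\inf_{y}\bigl(\bar S(x,y)+h(y,a)\bigr)$ that you listed under (ii), bypassing the perturbation argument entirely.
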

  Such a pair $(u_-, u_+)$ will be called a pair of {\em conjugate calibrated subactions} and we introduce the notation
  
  \begin{nota}
  If $(u_-, u_+)$ is a pair of conjugate calibrated subactions, we denote by $\Ic (u_-, u_+)$ the set:
  $$\Ic(u_-, u_+)=\{ x\in\R^n; u_-(x)=u_+(x)\}.$$
  \end{nota}
  Note that $\Mc(S)\subset \Ic(u_-, u_+)$. Note too that $u_-$ and $u_+$ are differentiable above $\Ic(u_-, u_+)$ with the same derivative. We use the following notation
  $$\widetilde \Ic(u_-, u_+)=\{ (x, du_-(x)); x\in \Ic (u_-, u_+)\}.$$
\subsection{Ma\~n\'e potential and images of the vertical fiber}
In the discrete case,  an  action potential can be defined that is an analogue of the one given  by R.~Ma\~n\'e in \cite{Man1}:

\begin{defin}
Let $m\geq 1$ be an integer. The {\em action potential} $\Ac_m: \R^n\times\R^n\rightarrow \R$ is defined by:
$$\forall x, y\in\R^n, \Ac_m(x, y)=\inf\{\sum_{i=1}^m\bar S(x_{i-1}, x_i); x_0=x, x_m=y\}.$$
\end{defin}
Let us give a result that is very similar to a statement given by P.~Bernard in \cite{Ber1}. We use a notation:

\begin{nota}
If $m\geq 1$ is an integer and $x, y\in\R^n$, then $\Sigma_m(x, y)\subset (\R^n)^{m+1}$ is the set of the $(x_0, x_1, \dots, x_m)$ such that $x_0=x$, $x_m=y$ and 
$$\Ac_m(x, y)=\sum_{i=1}^m\bar S(x_{i-1}, x_i).$$
\end{nota}

\begin{prop}\label{Ppotential}
 Let $m\geq 1$ be an integer. Then $\Ac_m$ is semiconcave. Let $x, y\in\R^n$ be two points. Then   $\Sigma(x,y)\not=\emptyset$ and if $(x_0, \dots, x_m)\in \Sigma_m(x,y)$, it is the projection of a unique orbit for $F$ that is:
 $$(x_0, -\frac{\partial S}{\partial x}(x_0, x_1)), (x_1, -\frac{\partial S}{\partial x}(x_1, x_2)= \frac{\partial S}{\partial y}(x_0, x_1)), \dots, (x_m,  \frac{\partial S}{\partial y}(x_{m-1}, x_m));$$
 and  $( \frac{\partial S}{\partial x}(x_0, x_1), \frac{\partial S}{\partial y}(x_{m-1}, x_m))$ is a superdifferential for $\Ac_m$ at $(x,y)$.
 Moreover,the following assertions are equivalent:
 \begin{enumerate}
 \item[(i)] $\Ac_m$ is differentiable with respect to $x$ at $(x, y)$;
 \item[(ii)] $\Ac_m$ is differentiable with respect to $y$ at $(x, y)$;
 \item[(iii)] $\Sigma(y)=\{ (x_0, \dots , x_m)\}$ has exactly one element.
 \end{enumerate} 
 \end{prop}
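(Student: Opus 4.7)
My plan is to combine the structural remark stated right after the definition of a globally positive diffeomorphism (for fixed endpoints, $\hat{\Fc}$ attains its minimum and every critical point lifts to an $F$-orbit) with the calibration-style argument already used in Proposition \ref{Pbackward}. First, the cited remark furnishes $\Sigma_m(x, y) \neq \emptyset$ directly: a minimizer $(x_1, \dots, x_{m-1})$ of $(q_1, \dots, q_{m-1}) \mapsto \sum_{i=1}^m \bar S(q_{i-1}, q_i)$ with $q_0 = x$, $q_m = y$ produces an element of $\Sigma_m(x, y)$. The interior first-order conditions $\frac{\partial S}{\partial y}(x_{i-1}, x_i) + \frac{\partial S}{\partial x}(x_i, x_{i+1}) = 0$ are precisely the matching relations that turn the sequence $(x_i, p_i)$ with $p_0 = -\frac{\partial S}{\partial x}(x_0, x_1)$ and $p_i = \frac{\partial S}{\partial y}(x_{i-1}, x_i)$ for $i \geq 1$ into an $F$-orbit, and this orbit is uniquely determined by the configuration since $F$ is a diffeomorphism.

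For semiconcavity and the superdifferential, I would fix one element $(x_0, \dots, x_m) \in \Sigma_m(x, y)$ and plug the competitor $(x', x_1, \dots, x_{m-1}, y')$ into the definition of $\Ac_m(x', y')$ for $(x', y')$ near $(x, y)$. Since $\bar S$ is $C^2$, a second-order Taylor expansion of the two modified endpoint summands yields
\[
\Ac_m(x', y') \leq \Ac_m(x, y) + \frac{\partial S}{\partial x}(x_0, x_1)(x' - x) + \frac{\partial S}{\partial y}(x_{m-1}, x_m)(y' - y) + K\bigl(\|x' - x\|^2 + \|y' - y\|^2\bigr)
\]
on a neighborhood of $(x, y)$ for some $K$ controlled by the local $C^2$-size of $S$. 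This simultaneously establishes $K$-semiconcavity of $\Ac_m$ near $(x, y)$ and exhibits the superdifferential $(\frac{\partial S}{\partial x}(x_0, x_1), \frac{\partial S}{\partial y}(x_{m-1}, x_m))$.

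For the three-way equivalence I would replicate the scheme of Proposition \ref{Pbackward}. The direction (iii) $\Rightarrow$ (i), (ii) proceeds by picking, for each nearby $(x', y')$, a minimizer $(x', x_1', \dots, x_{m-1}', y') \in \Sigma_m(x', y')$; uniqueness of the limiting minimizer combined with a compactness step (any accumulation point of $(x_i')$ must lie in $\Sigma_m(x, y)$, hence equal $(x_i)$) forces $x_i' \to x_i$, and Taylor expansion at the limit configuration yields a reverse inequality matching the superdifferential bound, giving differentiability in both arguments with the stated derivatives. For the converse, two distinct configurations $(x_0, x_1, \dots)$ and $(x_0, x_1', \dots)$ in $\Sigma_m(x, y)$ produce two distinct superdifferentials in the $x$-direction: the negative definiteness of $\frac{\partial^2 S}{\partial q \partial Q}$ makes $Q \mapsto \frac{\partial S}{\partial x}(q, Q)$ globally injective for fixed $q$, so $x_1 \neq x_1'$ entails $\frac{\partial S}{\partial x}(x_0, x_1) \neq \frac{\partial S}{\partial x}(x_0, x_1')$, ruling out differentiability in $x$. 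The $y$-side is symmetric via $q \mapsto \frac{\partial S}{\partial Q}(q, Q)$.

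The step I expect to require the most care is the compactness argument in the (iii) $\Rightarrow$ (i), (ii) direction, namely ruling out escape to infinity of the interior vertices $x_i'$ of minimizing configurations for perturbed endpoints. This will be handled by the uniform a priori Lipschitz bounds satisfied by minimizers of twist actions over a fixed number of steps, but it is the one point where one genuinely needs more than the local $C^2$-calculus of $S$. Once that input is in hand, the remainder is a direct translation of the $m = 1$ argument of Proposition \ref{Pbackward}.
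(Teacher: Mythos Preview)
Your proposal is correct and follows essentially the same route as the paper: competitor configurations for the superdifferential and semiconcavity, the orbit relations from the interior first-order conditions, distinct minimizers yielding distinct superdifferentials via the twist/injectivity of $Q\mapsto\frac{\partial S}{\partial x}(q,Q)$, and a continuity-of-minimizers argument for (iii)$\Rightarrow$(i),(ii). The only cosmetic difference is that the paper packages semiconcavity as ``infimum of a uniformly semiconcave, coercive family'' rather than writing the quadratic upper bound directly, and it asserts the closeness of nearby minimizers without spelling out the compactness step you rightly flag.
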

 
 \begin{proof}
 The function $\Ac_m$ is the infimum of a uniformly semiconcave, bounded from below and coercive familiy. Hence it is semiconcave and the infimum is attained.\\
If $(x_0, \dots, x_m)\in \Sigma_m(x, y)$,  we have an infimum and then the partial derivatives vanish and:
 $$\frac{\partial S}{\partial y}(x_0, x_1)+\frac{\partial S}{\partial x}(x_1, x_2)=0, \dots , \frac{\partial S}{\partial y}(x_{m-2}, x_{m-1})+\frac{\partial S}{\partial x}(x_{m-1}, x_m)=0.$$
 This implies that $(x_0, \dots, x_m)$ is the projection of a unique orbit, that is: 
 $$(x_0, -\frac{\partial S}{\partial x}(x_0, x_1)), (x_1, -\frac{\partial S}{\partial x}(x_1, x_2)= \frac{\partial S}{\partial y}(x_0, x_1)), \dots, (x_m,  \frac{\partial S}{\partial y}(x_{m-1}, x_m)).$$
Moreover,
 $$\begin{matrix}\Ac_m(x',y')&\leq   \bar S(x', x_1)+\dots  +\bar S(x_{m-1}, y')\hfill\\
 &\leq\Ac_m(x, y)+(\bar S(x', x_1)-\bar S(x,x_1))+(\bar S(x_{m-1}, y')-\bar S(x_{m-1}, x))\hfill\\
 &\leq \Ac_m(x, y)+\frac{\partial S}{\partial x}(x, x_1)(x'-x)+\frac{\partial S}{\partial y}(x_{m-1}, y)(y'-y)\\
& \hfill +K(\| x-x'\|^2+\| y-y'\|^2)
 \end{matrix}
 $$
 hence  $( \frac{\partial S}{\partial x}(x_0, x_1), \frac{\partial S}{\partial y}(x_{m-1}, x_m))$ is a superdifferential for $\Ac_m$ at $(x,y)$.\\

 Let us now assume that $\Sigma_m(x,y)$ contains at least two distinct elements $(x_0, \dots, x_m)$ and $(y_0, \dots , y_m)$. We know that they are the projections of two distinct orbits, one joining $(x, -\frac{\partial S}{\partial x}(x_0, x_1))$ to $(y,  \frac{\partial S}{\partial y}(x_{m-1}, x_m))$ and the other one joining $(x, -\frac{\partial S}{\partial x}(y_0, y_1))$ to $(y,  \frac{\partial S}{\partial y}(y_{m-1}, y_m))$. Because the orbits are distinct, the points are not the same and then $\frac{\partial S}{\partial x}(x_0, x_1)\not= \frac{\partial S}{\partial x}(y_0, y_1)$ and $\frac{\partial S}{\partial y}(x_{m-1}, x_m)\not= \frac{\partial S}{\partial y}(y_{m-1}, y_m)$. Hence $\Ac_m$ has two distinct superderivatives with respect to $x$ and two distinct superderivatives with respect to $y$ at $(x,y)$.\\
 
 Let us   assume that $\Sigma_m(x,y)$ contains exactly one  element  $(x_0, \dots, x_m)$. Let $(x', y')$ be close to $(x,y)$. Then every element  $(x'_0, \dots, x'_m)$ of $\Sigma_m(x',y')$ is close to $(x_0, \dots, x_m)$ and we have:
 $$\begin{matrix} \Ac_m(x', y')&=\displaystyle{\sum_{i=1}^m}\bar S (x'_{i-1}, x'_i)  =\bar S(x, x'_1)+\displaystyle{\sum_{i=2}^{m-1}}\bar S (x'_{i-1}, x'_i)+\hfill\\ & +\bar S(x'_{m-1}, y)+ \bar S(x', x'_1)-\bar S(x, x'_1)+\bar S(x'_{m-1}, y')-\bar S(x'_{m-1}, y)\hfill\\
 &\geq \Ac_m(x, y)+\bar S(x', x'_1)-\bar S(x , x'_1)+\bar S(x'_{m-1}, y')-\bar S(x'_{m-1}, y)\hfill\\
 &\geq \Ac_m(x, y)+\frac{\partial\bar S}{\partial x}(x, x'_1)(x'-x)+\frac{\partial\bar S}{\partial y}(x'_{m-1}, y)(y'-y)+o(\| x-x'\|)+o(\| y'-y\|)\hfill\\
 &\geq \Ac_m(x, y)+\frac{\partial\bar S}{\partial x}(x, x_1)(x'-x)+\frac{\partial\bar S}{\partial y}(x_{m-1}, y)(y'-y)+o(\| x-x'\|)+o(\| y'-y\|).\hfill\end{matrix}.$$
  This proves that $\Ac_m$ is differentiable at $(x,y)$. 
\end{proof}

\begin{nota}
At every $x\in\R^n$ we denote by $\Vc(x)$ the fiber $\{ x\}\times \R^n $ of $\R^n\times \R^n$.
\end{nota}

\begin{prop}\label{Pvertical}
Let $(x,y)$ be a point of differentiability of $\Ac_m$. Then $(y, \frac{\partial \Ac_m}{\partial y}(x,y))\in F^m(\Vc(x))$ and $(x, -\frac{\partial \Ac_m}{\partial x}(x,y))\in F^{-m}(\Vc(y))$.

\end{prop}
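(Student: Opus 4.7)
The plan is to combine the structural information about $\Sigma_m(x,y)$ given by Proposition \ref{Ppotential} with the identification of the superdifferential of $\Ac_m$ as an actual differential whenever $\Ac_m$ is differentiable.

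First, since $\Ac_m$ is differentiable at $(x,y)$, the equivalence of (i), (ii), (iii) in Proposition \ref{Ppotential} tells us that $\Sigma_m(x,y)$ reduces to a single configuration $(x_0, x_1, \dots, x_m)$ with $x_0 = x$ and $x_m = y$. By the same proposition, this configuration is the projection of a unique $F$-orbit, namely
\[
\bigl(x, -\tfrac{\partial S}{\partial q}(x, x_1)\bigr),\ \bigl(x_1, \tfrac{\partial S}{\partial Q}(x, x_1)\bigr),\ \dots,\ \bigl(y, \tfrac{\partial S}{\partial Q}(x_{m-1}, y)\bigr).
\]

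Next, Proposition \ref{Ppotential} also provides the superdifferential $\bigl(\tfrac{\partial S}{\partial q}(x, x_1), \tfrac{\partial S}{\partial Q}(x_{m-1}, y)\bigr)$ of $\Ac_m$ at $(x,y)$. Since $\Ac_m$ is assumed differentiable at $(x,y)$, a semiconcave function has a unique superdifferential equal to its gradient there, so
\[
\tfrac{\partial \Ac_m}{\partial x}(x,y) = \tfrac{\partial S}{\partial q}(x, x_1), \qquad \tfrac{\partial \Ac_m}{\partial y}(x,y) = \tfrac{\partial S}{\partial Q}(x_{m-1}, y).
\]

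It now suffices to match these with the endpoints of the $F$-orbit. The first point of the orbit is $\bigl(x, -\tfrac{\partial \Ac_m}{\partial x}(x,y)\bigr)$, which lies in the vertical fiber $\Vc(x) = \{x\}\times \R^n$, and its image by $F^m$ is precisely $\bigl(y, \tfrac{\partial \Ac_m}{\partial y}(x,y)\bigr)$. This yields
\[
\bigl(y, \tfrac{\partial \Ac_m}{\partial y}(x,y)\bigr) \in F^m(\Vc(x)).
\]
Applying $F^{-m}$ to the same identity (and noting that the last point of the orbit sits in $\Vc(y)$) gives the symmetric conclusion $\bigl(x, -\tfrac{\partial \Ac_m}{\partial x}(x,y)\bigr) \in F^{-m}(\Vc(y))$.

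There is no real obstacle here: the proposition is essentially a reading of Proposition \ref{Ppotential}. The only point to be careful about is the sign conventions coming from the implicit definition of $F$ via the generating function $S$, and the identification of the superdifferential of a semiconcave function with its gradient at points of differentiability.
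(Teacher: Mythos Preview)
Your proof is correct and follows essentially the same approach as the paper: both invoke Proposition~\ref{Ppotential} to obtain the unique minimizing configuration and its associated $F$-orbit, identify the partial derivatives of $\Ac_m$ with the appropriate derivatives of $S$ at the endpoints, and then read off the conclusion from the orbit relation $F^m(x,-\tfrac{\partial \Ac_m}{\partial x}(x,y))=(y,\tfrac{\partial \Ac_m}{\partial y}(x,y))$. Your explicit mention of the superdifferential/gradient identification is a minor clarification, but the argument is the same.
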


\begin{demo} We use proposition \ref{Ppotential}.  As $(x,y)$ is a point of differentiability of $\Ac_m$, $\Sigma_m(x,y)=\{ (x_0, \dots , x_m)\}$ has only one element and this is the projection of the $F$-orbit
$$(x_0, -\frac{\partial S}{\partial x}(x_0, x_1)), (x_1, -\frac{\partial S}{\partial x}(x_1, x_2)= \frac{\partial S}{\partial y}(x_0, x_1)), \dots, (x_m,  \frac{\partial S}{\partial y}(x_{m-1}, x_m)).$$
Moerover, we have $\frac{\partial \Ac_m}{\partial x}(x,y)=\frac{\partial S}{\partial x}(x, x_1)$ and $\frac{\partial \Ac_m}{\partial y}(x,y)=\frac{\partial S}{\partial y}(x_{m-1}, y)$. We deduce that $F^m(x, -\frac{\partial \Ac_m}{\partial x}(x,y))=(y, \frac{\partial \Ac_m}{\partial y}(x,y))$ and then  proposition \ref{Pvertical}.
\end{demo}
\begin{cor}\label{C1}
We assume that a piece of  orbit $(x_i,y_i)_{i\in [0, m+1]}$ for $F$  is minimizing. Then 
 $\Ac_m$ is as regular as $F$ is in a neighbourhood of $(x_0,x_m)$ and  in a neighborhood of $(x_1, x_{m+1})$.
\end{cor}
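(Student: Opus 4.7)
The plan is to exploit that a minimizing orbit has no conjugate points, so that the critical-point equation for $\hat\Fc$ can be inverted, via the implicit function theorem, to give a smooth parametrization of the minimizer by its endpoints. I focus on the neighborhood of $(x_0,x_m)$; the neighborhood of $(x_1,x_{m+1})$ is handled identically by shifting indices.

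Set $\varphi_{x,y}(z_1,\ldots,z_{m-1}):=\bar\Fc(x,z_1,\ldots,z_{m-1},y)$. By hypothesis, $\hat z=(x_1,\ldots,x_{m-1})$ is a minimizer of $\varphi_{x_0,x_m}$, so its Hessian there is positive semi-definite. I would upgrade this to positive definite using the fact that the minimality of the longer piece $(x_0,\ldots,x_{m+1})$ forbids conjugate points along the sub-piece $(x_0,\ldots,x_m)$: concretely, $Df^m\cdot V(x_0,y_0)$ is transverse to $V(x_m,y_m)$, and under the strict twist condition $\frac{\partial^2 S}{\partial q\partial Q}(q,Q)(v,v)\leq-\alpha\|v\|^2$ this transversality is equivalent to the non-degeneracy of $D^2\varphi_{x_0,x_m}(\hat z)$. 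The implicit function theorem applied to $\nabla_z\varphi_{x,y}(z)=0$ then yields neighborhoods $U$ of $(x_0,x_m)$ and $W$ of $\hat z$ together with a unique smooth branch $(x,y)\mapsto z(x,y)\in W$ of critical points, whose regularity is that of $\nabla S$, i.e., that of $F$.

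To conclude that $\Ac_m(x,y)=\varphi_{x,y}(z(x,y))$ on a possibly smaller neighborhood of $(x_0,x_m)$, I would argue by compactness and coercivity of $S$: global minimizers $\tilde z(x,y)$ of $\varphi_{x,y}$ exist and stay uniformly bounded for $(x,y)$ near $(x_0,x_m)$. Any accumulation point as $(x,y)\to(x_0,x_m)$ is a minimizer of $\varphi_{x_0,x_m}$ and, by Proposition \ref{Ppotential}, the projection of an $F$-orbit joining $V(x_0,y_0)$ to $V(x_m,y_m)$ in $m$ steps; the local transversality at $(x_m,y_m)$ forces such orbits to be locally unique and hence forces $\tilde z(x,y)\in W$ for $(x,y)$ close enough to $(x_0,x_m)$. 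Therefore $\tilde z(x,y)=z(x,y)$ and $\Ac_m(x,y)=\varphi_{x,y}(z(x,y))$ is as regular as $F$ on a neighborhood of $(x_0,x_m)$.

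The clean step is the application of the implicit function theorem; the main obstacle is the last one, namely ruling out that as $(x,y)$ moves some other branch of critical points of $\varphi_{x,y}$, coming from outside $W$, becomes the global minimizer. A twist map can a priori carry several minimizing configurations with the same endpoints, so the argument genuinely needs coercivity of $S$, Proposition \ref{Ppotential}, and the isolation of $\hat z$ provided by the no-conjugate-points property of the longer minimizing orbit. Everything then reduces to the smoothness of the branch $z(x,y)$, which is the content of the implicit function theorem.
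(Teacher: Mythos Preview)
Your proposal is correct and follows essentially the same line as the paper's own proof. Both arguments rest on the same two ingredients: (i) the longer minimizing segment $(x_0,\dots,x_{m+1})$ forbids conjugate points on $(x_0,\dots,x_m)$, i.e.\ $Df^m\,V(x_0)$ is transverse to $V(x_m)$; and (ii) minimizers of $\varphi_{x,y}$ for $(x,y)$ near $(x_0,x_m)$ accumulate only on the unique minimizer $\hat z$, so that the local smooth branch is in fact $\Ac_m$. The only difference is packaging: you invoke the implicit function theorem on $\nabla_z\varphi_{x,y}=0$ (non-degenerate Hessian on the interior variables), whereas the paper phrases the same fact geometrically, observing that $F^m(\Vc(x_0))$ and $F^{-m}(\Vc(x_m))$ are locally smooth graphs over the base and reading off $\partial\Ac_m/\partial x$, $\partial\Ac_m/\partial y$ from Proposition~\ref{Pvertical}. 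These are equivalent formulations of the same transversality.

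One small sharpening is worth making explicit. Your phrase ``local transversality at $(x_m,y_m)$ forces such orbits to be locally unique'' does not by itself rule out a \emph{global} minimizer of $\varphi_{x_0,x_m}$ distinct from $\hat z$ (and hence far from $W$). What actually pins this down is the extra step $x_{m+1}$: if $(x_0,z_1,\dots,z_{m-1},x_m)$ were another minimizer, then $(x_0,z_1,\dots,z_{m-1},x_m,x_{m+1})$ would also minimize the $(m{+}1)$-step action with endpoints $x_0,x_{m+1}$, hence be a critical point; the Euler--Lagrange relation at the interior node $x_m$ then forces its momentum to equal $y_m$, so the two orbits coincide. The paper uses this implicitly when it speaks of ``the unique element of $\Sigma(x_0,x_m)$''; adding this one-line argument closes your sketch cleanly.
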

\begin{demo}
We prove the first assertion. \\
Let us prove that  $DF^{m}(V(x_0))$ is transverse to $V(x_m)$.  We use the results that are contained in section 2.3. of \cite{Arn4} (especially proposition 6). Let us use the notation:
$$\bar \Fc(y_0, \dots , y_{m+1})=\sum_{i=0}^m\bar S(y_i, y_{i+1}) $$
and $(x_0, \dots , x_{m+1})$ is a minimizer of $\bar\Fc$ among the $(y_0, \dots , y_{m+1})$ such that $y_0=x_0$ and $y_{m+1}=x_{m+1}$.  
We denote by $\Hc=\Hc(x_0, \dots, x_{m+1})$ the Hessian of $\Fc$ with fixed ends at $(x_0, \dots, x_{m+1})$. Then it is positive semidefinite.  The kernel of $\Hc$ is the set of projections $(\delta x_i)_{1\leq i\leq m}$ of infinitesimal orbits $(\delta x_i, \delta y_i)_{1\leq i\leq m}$ along the orbit $(x_i,y_i)_{1\leq i\leq m}$ such that their extension $(\delta x_i, \delta y_i)_{i\in\Z}$ satisfies $\delta x_0=0$ and $\delta x_{m+1}=0$.\\
Let us assume that  $DF^{m}(V(x_0))$ is not transverse to $V(x_m)$. Then there exists an infinitesimal orbit $(\delta x_i, \delta y_i)_{0\leq i\leq m}$ that is not the $(0, 0)$ orbit and that satisfies $\delta x_0=\delta x_m=0$.  Then $( 0, \delta x_1, \dots , \delta x_{m-1}, 0,0)$ is in the isotropic cone for $\Hc(x_{0}, \dots, x_{m+1})$ and because $\Hc(x_0, \dots, x_{m+1})$ is positive semi-definite, $( 0, \delta x_1, \dots , \delta x_{m-1}, 0,0)$ is in the kernel of $( 0, \delta x_1, \dots , \delta x_{m-1}, 0,0)$. This implies that it is an infinitesimal orbit and then the $0$-orbit.\\

We have then proved that $DF^{m}(V(x_0))$ is transverse to $V(x_m)$ and this implies that $DF^{-m}(V(x_m))$ is transverse to $V(x_0)$. Hence $F^m(\Vc(x_0))$ (resp. $F^{-m}(\Vc(x_m))$) is a manifold that is a graph as smooth as $F$ is in a neighborhood of $(x_m, y_m)$ (resp. $(x_0, y_0)$).\\
If $(x'_0, x_m')$ is closed to $(x_0, x_m)$, we have noticed that every element of $\Sigma(x'_0, x_m')$ is closed to the unique element of $\Sigma (x_0, x_m)$. Hence it corresponds to an orbit $(x'_i, y'_i)_{0\leq i\leq m}$ that is closed to $(x_i, y_i)_{0\leq i\leq m}$. Moreover, $F^m(\Vc(x'_0))$ (resp. $F^{-m}(\Vc(x'_m))$) is a manifold that is a graph as smooth as $F$ is in a neighborhood of $(x'_m, y'_m)$ (resp. $(x'_0, y'_0)$) because it is close to $F^m(\Vc(x_0))$  (resp. $F^{-m}(\Vc(x_m))$). Hence there is only one choice for $(x_0', y_0')$ above $x_0'$ close to $x_0$   on $F^{-m}(\Vc(x'_m))$ and it smoothly depends on $(x'_0, x'_m)$ and we have the same result for the choice of $y'_m$. This means that $\Sigma_m(x'_0, x'_m)$ has only one element, hence $\Ac_m$ is differentiable at $(x_0', x_m')$. Morever,   $\frac{\partial \Ac_m}{\partial x}(x'_0, x'_m)=-y'_0$ and $\frac{\partial \Ac_m}{\partial y}(x'0, x'_m)=y'_m$ smoothly depend on $(x'_0, x'_m)$.

 \end{demo}

\subsection{Comparison between Ma\~ n\'e's potential and subactions}

  A consequence of the definition of a subaction is that if $u:\R^n\rightarrow \R$ is a subaction, then: $\forall x, y\in\R^n, u(y)-u(x)\leq \Ac_m(x, y)$.
  \begin{prop}
  Let $u_-:\R^n\rightarrow \R$ be a backward calibrated subaction and let $u_+:\R^n\rightarrow \R$ be a forward calibrated subaction. Let $x_0$ be a point of differentiability for $u_-$ (resp. $u_+$).   Then the backward (resp. forward) orbit of $(x_0, du_-(x_0))$ (resp. $(x_0, du_+(x_0))$) is on the graph of $du_-$ (resp. $du_+$) and is denoted by $(x_i, du_-(x_i))_{i\in\N}$ (resp.  $(x_i, du_+(x_i))_{i\in\N}$). Then $(x_i)$ is strongly minimizing, $\Ac_m$ is differentiable at every $(x_m, x_0)$ (resp. $(x_0, x_m)$) with $m\geq 1$ and for every $x\in\R^n$
  $$u_-(x)-u_-(x_0)-du_-(x_0)(x-x_0)\leq \Ac_m(x_m, x)-\Ac_m(x_m, x_0)-\frac{\partial \Ac_m}{\partial y}(x_m, x_0)(x-x_0)$$
(resp.  
  $$ u_+(x)-u_+(x_0)-du_+(x_0)(x-x_0)\geq \Ac_m(x_0, x_m)- \Ac_m(x, x_m)+\frac{\partial \Ac_m}{\partial x}(x_0, x_m)(x-x_0)).$$
  \end{prop}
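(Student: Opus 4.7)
The plan is to iterate the calibration identity for $u_-$ along its backward orbit through $x_0$, apply the subaction property to produce the one-sided comparison with $\Ac_m$, and match the first-order terms via Propositions~\ref{Pbackward} and~\ref{Ppotential}. Since $u_-$ is differentiable at $x_0$, Proposition~\ref{Pbackward} and the remark following it produce a unique backward orbit $(x_i, du_-(x_i))_{i \in \N}$ of $F$ lying on the graph of $du_-$, with $\Sigma(x_i) = \{x_{i+1}\}$, with $u_-$ differentiable at every $x_i$, and with the configuration $(x_i)_{i \geq 0}$ strongly minimizing. Iterating the calibration equality $u_-(x_i) = u_-(x_{i+1}) + \bar S(x_{i+1}, x_i)$ gives
$$u_-(x_0) - u_-(x_m) = \sum_{i=1}^{m} \bar S(x_i, x_{i-1}) = \Ac_m(x_m, x_0),$$
where the second equality uses strong minimality: the sequence $(x_m, x_{m-1}, \dots, x_0)$ realizes the infimum defining $\Ac_m(x_m, x_0)$.

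Because this finite orbit segment is minimizing, Corollary~\ref{C1} guarantees that $\Ac_m$ is smooth in a neighborhood of $(x_m, x_0)$; in particular $\Sigma_m(x_m, x_0)$ reduces to the singleton $\{(x_m, \dots, x_0)\}$, and Proposition~\ref{Ppotential} yields $\frac{\partial \Ac_m}{\partial y}(x_m, x_0) = \frac{\partial S}{\partial y}(x_1, x_0)$. Proposition~\ref{Pbackward} also gives $du_-(x_0) = \frac{\partial S}{\partial y}(x_1, x_0)$, so the two derivatives coincide. For any $x \in \R^n$, applying the subaction inequality $u_-(y_{i+1}) - u_-(y_i) \leq \bar S(y_i, y_{i+1})$ along an arbitrary $m$-step path with $y_0 = x_m$, $y_m = x$ and then taking the infimum over such paths produces $u_-(x) - u_-(x_m) \leq \Ac_m(x_m, x)$. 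Subtracting the equality of the previous display then gives
$$u_-(x) - u_-(x_0) \leq \Ac_m(x_m, x) - \Ac_m(x_m, x_0),$$
and subtracting $du_-(x_0)(x - x_0) = \frac{\partial \Ac_m}{\partial y}(x_m, x_0)(x - x_0)$ from both sides is exactly the claimed inequality.

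The forward case is completely symmetric. One uses the analogue of Proposition~\ref{Pbackward} to obtain the forward orbit $(x_i, du_+(x_i))_{i \in \N}$ on the graph of $du_+$, iterates to get $u_+(x_m) - u_+(x_0) = \Ac_m(x_0, x_m)$, and uses the reversed subaction bound $u_+(x) - u_+(x_m) \geq -\Ac_m(x, x_m)$; the derivatives match through $du_+(x_0) = -\frac{\partial S}{\partial x}(x_0, x_1) = -\frac{\partial \Ac_m}{\partial x}(x_0, x_m)$, and the conclusion follows after subtracting the linear term. I expect no substantive obstacle here beyond careful bookkeeping of the time direction and sign conventions; the proposition is essentially a direct consequence of the three preceding results (Propositions~\ref{Pbackward}, \ref{Ppotential}, and Corollary~\ref{C1}) together with the strong minimality recorded in the remark after Proposition~\ref{Pbackward}.
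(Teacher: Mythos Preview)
Your argument is correct and follows essentially the same route as the paper: iterate the calibration identity to get $u_-(x_0)-u_-(x_m)=\Ac_m(x_m,x_0)$, combine with the subaction inequality $u_-(x)-u_-(x_m)\leq\Ac_m(x_m,x)$, and subtract the matching linear term. The only minor difference is that you invoke Corollary~\ref{C1} to obtain differentiability of $\Ac_m$ at $(x_m,x_0)$, whereas the paper argues directly that $\Sigma_m(x_m,x_0)$ is a singleton (because each $\Sigma(x_i)$ is) and then appeals to Proposition~\ref{Ppotential}; similarly, you identify $du_-(x_0)=\frac{\partial\Ac_m}{\partial y}(x_m,x_0)$ by computing both sides explicitly, while the paper reads this equality off from the inequality together with equality at $x=x_0$.
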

  
\begin{demo}
  We prove the result for $u_-$. We assume that $x_0$ is a point of differentiability for $u_-$.\\
   We deduce from the remark after proposition \ref{Pbackward} that the backward   orbit of $(x_0, du_-(x_0))$ is on the graph of $du_-$  and we denote it by $(x_i, du_-(x_i))_{i\in\N}$ . We deduce from the same remark that $(x_i)$ is strongly minimizing. We  deduce from proposition \ref{Pbackward} that $\Sigma (x_i)$ has only one element. Hence $\Sigma (x_m, x_0)$ has only one element and then $\Ac_m$ is differentiable at $(x_m, x_0)$. \\
 As $u_-$ is a subaction, we have  
 $$\forall x\in\R^n, u_-(x)-u_-(x_0)=u_-(x)-u_-(x_m)+u_-(x_m)-u_-(x_0)\leq \Ac_m(x_m, x)-\Ac_m(x_m, x_0)$$
 because $u_-(x_m)-u_-(x_0)=\Ac_m(x_m, x_0)$. As the two functions vanish for $x=x_0$ and are differentiable with respect to $x$, we deduce $u_-'(x_0)=\frac{\partial \Ac_m}{\partial y}(x_m, x_0)$ and then the wanted inequality.
  \end{demo} 
  \begin{prop} \label{Palg}
We assume that $(u_-, u_+)$ is a pair of conjugate calibrated subaction.  Let $x\in\Ic(u_-, u_+)$ be a point, $(y_n)$ be a sequence of points of $\R^n$ converging to $x$, and $(\lambda_n)$ be a sequence of positive real numbers so that the two limits (written in charts) $\displaystyle{\lim_{n\rightarrow \infty} \frac{y_n-x}{\lambda_n}=X}$ and $\displaystyle{Y=\lim_{n\rightarrow \infty} \frac{du_-(y_n)-du_-(x)}{\lambda_n} }$ (resp. $\displaystyle{ \lim_{n\rightarrow \infty} \frac{du_+(y_n)-du_+(x)}{\lambda_n} }$)  exist. Then we have:
$\forall k\in\R^n,$
$$ Y.k \leq \frac{1}{2} \bigg(    \frac{\partial^2 \Ac_m}{\partial y^2}(x_{-m}, x)(k,k)+ \frac{\partial^2 \Ac_m}{\partial y^2}(x_{-m}, x)(X,X)+ \frac{\partial^2 \Ac_m}{\partial x^2}(x, x_m) (X-k,X-k)\bigg)$$
where $(x_i, du_-(x_i))_{i\in\Z}$ is the  orbit of $(x, du_-(x))$
(resp:
$\forall k\in\R^n,$
$$  \frac{1}{2}\bigg(  -\frac{\partial^2 \Ac_m}{\partial x^2}(x, x_m)(k,k)-\frac{\partial^2 \Ac_m}{\partial x^2}(x, x_m)(X,X)- \frac{\partial^2 \Ac_m}{\partial y^2}(x_{-m}, x)(k-X,k-X) {\bigg)}\leq Y.k  )$$
\end{prop}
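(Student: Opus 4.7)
The plan is to deduce the bound by Taylor-expanding the conclusion of the previous proposition at two distinct base points of differentiability of $u_-$ (respectively $u_+$): the reference point $x\in\Ic(u_-,u_+)$ itself, and the approximating sequence $y_n$.

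I would first apply the previous proposition to $u_-$ at $x$. Corollary \ref{C1} guarantees that $\Ac_m$ is $C^2$ in a neighborhood of $(x_{-m},x)$, so Taylor-expanding its right-hand side yields, for $z$ near $x$,
$$u_-(z)-u_-(x)-du_-(x)(z-x)\leq \tfrac{1}{2}A(z-x,z-x)+o(\|z-x\|^2),$$
with $A=\tfrac{\partial^2\Ac_m}{\partial y^2}(x_{-m},x)$. Since $u_-$ is differentiable at $y_n$, the remark after Proposition \ref{Pbackward} ensures that the backward orbit of $(y_n,du_-(y_n))$ is strongly minimizing, so the same argument applied at $y_n$ furnishes an analogous quadratic upper bound with Hessian $A_n=\tfrac{\partial^2\Ac_m}{\partial y^2}(y_n^{(-m)},y_n)\to A$. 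The dual statement for $u_+$ applied at $x$, using its forward orbit $x,x_1,\ldots,x_m$, produces the matching lower bound
$$u_+(z)-u_+(x)-du_+(x)(z-x)\geq -\tfrac{1}{2}B(z-x,z-x)+o(\|z-x\|^2),$$
with $B=\tfrac{\partial^2\Ac_m}{\partial x^2}(x,x_m)$.

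Next I would test these three inequalities at $z=x+\lambda_n k$ for arbitrary $k\in\R^n$ and combine. Writing $y_n-x=\lambda_n X+\varepsilon_n$ with $\varepsilon_n=o(\lambda_n)$ and $du_-(y_n)-du_-(x)=\lambda_n Y+\delta_n$ with $\delta_n=o(\lambda_n)$, the cross term $du_-(y_n)\varepsilon_n$ splits as $du_-(x)\varepsilon_n+o(\lambda_n^2)$, and the piece $du_-(x)\varepsilon_n$ is absorbed by the remainder of the bound at $x$ applied at $z=y_n$. Pairing the upper bound at $y_n$ applied at $z=x+\lambda_n k$ with the upper bound at $x$ applied at $z=y_n$, and transferring the lower bound from $u_+$ via $u_-\leq u_+$ together with $u_-(x)=u_+(x)$ and $du_-(x)=du_+(x)$, one obtains after dividing by $\lambda_n^2$
$$Y\cdot(k-X)\leq A(X,k-X)+\tfrac{1}{2}(A+B)(k-X,k-X)+o(1).$$
Passing to the limit and rearranging with the algebraic identity $A(k,k)+A(X,X)=A(k-X,k-X)+2A(X,k)$ recovers the announced bound $Y\cdot k\leq\tfrac{1}{2}(A(k,k)+A(X,X)+B(X-k,X-k))$. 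The $u_+$ variant is dual: exchange upper and lower bounds, the roles of $A$ and $B$, and backward and forward orbits.

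The principal obstacle is that $u_-$ is only first-order differentiable at $y_n$, so a naive Taylor expansion there carries an $o(\lambda_n)$ remainder that would swamp the $O(\lambda_n^2)$ content we aim to extract. This is circumvented by invoking the previous proposition at $y_n$ rather than bare differentiability, which, thanks to the $C^2$ smoothness of $\Ac_m$ along the strongly minimizing backward orbit of $y_n$, produces a genuine quadratic upper bound with the explicit Hessian $A_n$. The identity $A(k,k)+A(X,X)=A(k-X,k-X)+2A(X,k)$ is the combinatorial hinge that symmetrizes $A$ and $B$ into the right-hand side displayed in the statement.
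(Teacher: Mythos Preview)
Your approach is essentially identical to the paper's: combine the comparison inequalities from the previous proposition at the two base points $x$ and $y_n$ for $u_-$ together with the dual one at $x$ for $u_+$, Taylor-expand $\Ac_m$ via Corollary~\ref{C1}, use the conjugate-pair relations $u_-(x)=u_+(x)$, $du_-(x)=du_+(x)$ to cancel the zeroth- and first-order terms, divide by $\lambda_n^{2}$, and take the limit (the paper then finishes by the substitution $K\mapsto -K$, which plays the same role as your polarization identity). Two small cautions: the displayed intermediate inequality $Y\cdot(k-X)\leq A(X,k-X)+\tfrac{1}{2}(A+B)(k-X,k-X)$ is not what your test-point choices actually yield---one gets instead $-Y\cdot(k-X)\leq\tfrac12\bigl(A(k-X,k-X)+A(X,X)+B(k,k)\bigr)$, which after $k\mapsto X-k$ gives the statement---and the ``transfer'' step requires the inequality $u_-\geq u_+$ (so that the leftover term $u_-(\cdot)-u_+(\cdot)\geq 0$ can be dropped), which is indeed the direction the paper's own proof invokes.
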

\begin{demo} The proof is an adapted version of the proof of proposition 18 in \cite{Arn2}. We just prove the first inequality. \\
Let $x\in\Ic(u_-, u_+)$ and let $z$ be a point of differentiability of $u_-$. We denote the negative orbit of $(z, du_-(z))$ by $(z_{-i}, du_-(z_{-i})_{i\in\N}$.Then we have:
\begin{enumerate}
\item[$\bullet$] $u_-({z}+h)-u_-(z)-du_-({z})h\leq   \Ac_m(z_{-m}, z+h)-\Ac_m(z_{-m}, z)-\frac{\partial \Ac_m}{\partial y}(z_{-m}, z)h$;
\item[$\bullet$] $u_-({z})-u_-(x)-du_-(x)({z}-x)\leq  \Ac_m(x_{-m} ,z)-\Ac_m(x_{-m}, x)-\frac{\partial \Ac_m}{\partial y}(x_{-m}, x)(z-x))$;
\item[$\bullet$] $
 \Ac_m(x, x_m)- \Ac_m(z+h, x_m)+\frac{\partial \Ac_m}{\partial x}(x, x_m)(z+h-x)\leq u_+({z}+h)-u_+(x)-du_+(x)({z}+h-x)$.
\end{enumerate}
Hence, by adding these three inequalities and  using that $u_-(x)=u_+(x)$, $du_-(x)=du_+(x)$  and $u_+\leq u_-$:\\

\noindent $(du_-(x) -du_-({z}))h$\\
\hglue 1truecm$ \leq  \Ac_m(z_{-m}, z+h)-\Ac_m(z_{-m}, z)-\frac{\partial \Ac_m}{\partial y}(z_{-m}, z)h   + \Ac_m(x_{-m} ,z)-\Ac_m(x_{-m}, x)$\\
\hglue 2truecm$-\frac{\partial \Ac_m}{\partial y}(x_{-m}, x)(z-x))    - \Ac_m(x, x_m)+ \Ac_m(z+h, x_m)-\frac{\partial \Ac_m}{\partial x}(x, x_m)(z+h-x)$.\\

We now consider a sequence $(y_k)$ of points of differentiability of $u_-$ that converges to $x$ such that $\forall k, y_k \not=x$,  a vector $K$ with fixed norm $\| K\|=\mu>0$ and the sequence $(h_k)=(\lambda_k K)$ where $(\lambda_k)$ is a sequence of positive numbers tending to zero. we denote by $(z_{-i}^k, du_-(z_{-i}^k))$ the backward orbit of $(y_k, du_-(y_k))$ for $F$. We have proved that:

\noindent $(du_-(x) -du_-({y_k}))h$\\
\hglue 1truecm$ \leq  \Ac_m(z^k_{-m}, y_k+h_k)-\Ac_m(z^k_{-m}, y_k)-\frac{\partial \Ac_m}{\partial y}(z^k_{-m},y_k)h_k   + \Ac_m(x_{-m} ,y_k)-\Ac_m(x_{-m}, x)$\\
\hglue 1truecm$-\frac{\partial \Ac_m}{\partial y}(x_{-m}, x)(y_k-x))    - \Ac_m(x, x_m)+ \Ac_m(y_k+h_k, x_m)-\frac{\partial \Ac_m}{\partial x}(x, x_m)(y_k+h_k-x)$.\\
We assume that $\displaystyle{\lim_{k\rightarrow +\infty} \frac{y_k-x}{\lambda_k}=X}$ and $\displaystyle{\lim_{k\rightarrow +\infty} \frac{du_-(y_k)-du_-(x)}{\lambda_k}=Y}$. We have proved in corollary \ref{C1} that $\Ac_m$ is as regular as $F$ is in a neighborhood of $(x_{-m}, x)$, $(z^k_{-m}, y_k)$ and $(x, x_m)$.  Moreover, we have the following lemma that is lemma 18 in \cite{Arn2}:
\begin{lemma}\label{Aubrylipschitz}There exists a constant $K>0$  such that, for every $q\in\Ic(u_-, u_+)$ and every ${q'}\in M$ where $u_-$ (resp. $u_+$) is differentiable, then $\| du_-(q)-du_-({q'})\|\leq K\| q-{q'}\|$ (resp. $\| du_+(q)-du_+({q'})\|\leq K\| q-{q'}\|$ ). In particular, $du_-$ and $du_+$ are continuous at every point of $\Ic(u_-, u_+)$.
\end{lemma}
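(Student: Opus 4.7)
The plan is to prove the estimate for $u_-$ (the one for $u_+$ will follow by the symmetric argument), by establishing a uniform two-sided quadratic comparison for $u_-$ near each $q\in\Ic(u_-,u_+)$ and then using semiconcavity of $u_-$ at $q'$ to extract the Lipschitz bound on $du_-$.

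First I will build a smooth sandwich for $u_-$ at each $q\in\Ic$. Let $p=du_-(q)=du_+(q)$ and let $(q_i)_{i\in\Z}$ be the unique $F$-orbit through $(q,p)$; its backward half is the backward calibrated orbit of $u_-$ from $q$, and its forward half is the forward calibrated orbit of $u_+$ from $q$, by Proposition~\ref{Pbackward} and its forward analogue. For any fixed $m\geq 1$, Corollary~\ref{C1} ensures that $\Ac_m$ is $C^2$-smooth near $(q_{-m},q)$ and near $(q,q_m)$, so the functions
$$\phi(y)=u_-(q_{-m})+\Ac_m(q_{-m},y),\qquad \psi(y)=u_+(q_m)-\Ac_m(y,q_m)$$
are $C^2$ near $q$. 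The subaction inequality for $u_-$ gives $u_-\leq\phi$, forward calibration for $u_+$ gives $\psi\leq u_+$, and the calibration identities at $q$ together with $u_-(q)=u_+(q)$ yield $\phi(q)=\psi(q)=u_-(q)$ and $d\phi(q)=d\psi(q)=p$. Taylor expansion then produces constants $C>0$ and $r>0$, uniform in $q\in\Ic$ by compactness of $\Ic\subset\T^n$, such that
$$\bigl|\,u_-(y)-u_-(q)-p\cdot(y-q)\,\bigr|\leq C\,\|y-q\|^2\qquad(\|y-q\|\leq r).\qquad (\star)$$

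Second, I will use $(\star)$ to derive the Lipschitz estimate. Let $q'$ be a differentiability point of $u_-$ with $\|q-q'\|\leq r/2$, and set $p'=du_-(q')$. Semiconcavity of $u_-$ at $q'$ gives
$$u_-(q'+h)\leq u_-(q')+p'\cdot h+K\|h\|^2\qquad (\|h\|\leq r/2).$$
Combining the lower half of $(\star)$ at $y=q'+h$ with the upper half at $y=q'$ and rearranging, I get
$$(p-p')\cdot h\;\leq\;2C\,\|q-q'\|^2+(2C+K)\,\|h\|^2.$$
For any unit vector $v$, I take $h=\lambda v$ and optimise over $\lambda>0$; the optimum $\lambda\sim\|q-q'\|$ yields $(p-p')\cdot v\leq C'\,\|q-q'\|$ with a uniform $C'$. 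Taking the supremum over unit $v$, $\|du_-(q)-du_-(q')\|\leq C'\,\|q-q'\|$ whenever $\|q-q'\|\leq r/2$; the regime $\|q-q'\|>r/2$ is trivial by the global Lipschitz bound on $u_-$. The analogous argument, with semiconvexity of $u_+$ at $q'$ replacing semiconcavity of $u_-$, gives the bound on $du_+$.

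The main obstacle is the lower half of $(\star)$. The upper half is immediate from $u_-\leq\phi$ and Taylor expansion of $\phi$. The lower half requires transferring the bound $u_+\geq\psi$ back to $u_-$ across the gap $\Delta:=u_+-u_-\geq 0$, so the heart of the matter is showing that $\Delta(y)=O(\|y-q\|^2)$ near $q\in\Ic$. This quadratic pinching follows from $\Delta$ being nonnegative and semiconvex, vanishing at $q$ with zero differential there, and being squeezed between the two smooth functions $\phi$ and $\psi$ which agree at $q$ up to first order; together these force the second-order behaviour of $\Delta$ to be uniformly controlled along $\Ic$. Continuity of $du_-$ and $du_+$ on $\Ic$ is then an immediate corollary of the Lipschitz estimate obtained above.
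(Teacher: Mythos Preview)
The paper does not prove this lemma at all; it simply quotes it as ``lemma 18 in \cite{Arn2}''. So there is no paper-proof to compare against, only the external reference.

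Your overall strategy is the right one and is essentially the argument in \cite{Arn2}: produce a two–sided $C^{1,1}$ bound $(\star)$ for $u_-$ at each $q\in\Ic(u_-,u_+)$, then combine it with semiconcavity of $u_-$ at $q'$ and optimise over $h$. Step~2 of your proposal is fine. The gap is exactly where you flag it, in the lower half of $(\star)$, and your proposed resolution does not close it. You claim that $\Delta=u_+-u_-$ being nonnegative, semiconvex, vanishing to first order at $q$, and ``squeezed between $\phi$ and $\psi$'' forces $\Delta(y)=O(\|y-q\|^2)$. This is false as stated: the function $\Delta(y)=\|y-q\|^{3/2}$ is convex (hence semiconvex), nonnegative, vanishes to first order at $q$, yet $\Delta(y)/\|y-q\|^2\to\infty$. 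Moreover, your squeeze only gives $\Delta\ge\psi-\phi$ (from $u_+\ge\psi$ and $u_-\le\phi$), which is a \emph{lower} bound on $\Delta$; to get $\Delta\le C\|y-q\|^2$ you would need an upper bound on $u_+$ and a lower bound on $u_-$, neither of which you have established.

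What is actually missing is a genuine lower $C^{1,1}$ barrier for $u_-$ at $q$. The way this is obtained in \cite{Arn2} is by using that the \emph{full} orbit of $(q,du_-(q))$ is calibrating for $u_-$ (equivalently, that $\widetilde\Ic(u_-,u_+)$ is invariant under $F$, so that $q_m\in\Ic$ for all $m$). Once $q_m\in\Ic$, the function $\tilde\psi(y)=u_-(q_m)-\Ac_m(y,q_m)$ satisfies $\tilde\psi\le u_-$ by the subaction inequality for $u_-$, touches $u_-$ at $q$ with $d\tilde\psi(q)=p$, and is $C^2$ near $q$ by Corollary~\ref{C1}; this gives the lower half of $(\star)$ directly, without passing through $\Delta$. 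You should either prove this invariance (it follows from combining backward calibration of $u_-$ with forward calibration of $u_+$ along the common orbit and the conjugacy $u_-\le u_+$ with equality at $q$) or invoke it explicitly.
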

This lemma implies that $(y_k, du_-(y_k))$ is closed to $(x, du_-(x))$ and then that $z_{-m}^k$ is close to $x_{-m}$. Hence we obtain: 

 $$\begin{matrix} (du_-(x) -&du_-({y_k}))h_k \leq \frac{1}{2} ( \frac{\partial^2\Ac_m}{\partial y^2} (z_{-m}^k, y_k)(h_k, h_k) +      \frac{\partial^2\Ac_m}{\partial y^2} (x_{-m}, x)(y_k-x, y_k-x)\hfill\\ 
 &+\frac{\partial^2\Ac_m}{\partial x^2}(x, x_m) (y_k+h_k-x, y_k+h_k-x) +o(\| h_k\|^2+\| y_k+h_k-x\|^2) ) .\end{matrix}$$
We multiply by $\frac{1}{\lambda_n^2}$ and take the limit and obtain
$$-Y.K\leq \frac{1}{2}( \frac{\partial^2\Ac_m}{\partial y^2}(x_{-m}, x )(K, K)+\frac{\partial^2\Ac_m}{\partial y^2}(x_{-m}, x)(X, X)+\frac{\partial^2\Ac_m}{\partial x^2}(x, x_m)(X+K, X+K)).$$
Changing $K$ into $-K$, we obtain the wanted inequality.

\end{demo}

 \subsection{Links between the tangent cone to the support of a strongly minimizing measure  and the Green bundles}\label{sscone}
 The notion of contingent cone was introduced by G.~Bouligand in \cite{Bou}. 
 
\begin{defin}
Let $A\subset \R^n\times \R^n$ be a subset of $\R^n\times \R^n$ and let $a\in A$ be  a point of $A$. Then the {\em contingent cone } to $A$ at $a$ is defined as being the set of all the limit points of the sequences $t_k(a_k-a)$ where $(t_k)$ is a sequence of real numbers and $(a_k)$ is a sequence of elements of $A$ that converges to $a$. This cone is denoted by $C_aA$ and it is a subset of $T_a(\R^n\times \R^n)$.

\end{defin}
We introduce an extension to this definition that is

\begin{defin}
Let $A\subset \R^n\times \R^n$ be a subset of $\R^n\times \R^n$ and let $a\in A$ be  a point of $A$.  Then the {\em limit contingent cone} to $A$ at $a$ is the set of the limit points of sequences $v_k\in C_{a_k}A$ where $(a_k)$ is any sequence of points of $A$ that converges to $a$. It is denoted by $\widetilde C_aA$. 

\end{defin}
In general , these tangent cones are not Lagrangian subspaces. Because we need to compare them to Lagrangian subspaces, we give a definition:

\begin{defin}
Let $\Lc_-\leq \Lc_+$ be two Lagrangian subspaces of $T_x( \R^n\times \R^n) $ that are transverse to the vertical. If $v\in T_x (\R^n\times \R^n )$ is a vector, we say that $v$ is between $\Lc_-$ and $\Lc_+$ and write $\Lc_-\leq v\leq \Lc_+$ if there exists a third Lagrangian subspace in $T_x( \R^n\times \R^n) $ such that:
\begin{enumerate}
\item[$\bullet$] $v\in\Lc$;
\item[$\bullet$] $\Lc_-\leq \Lc\leq \Lc_+$.
\end{enumerate}
A part $B$ of $T_x( \R^n\times \R^n) $ is between $\Lc_-$ and $\Lc_+$ if $\forall v\in  B, \Lc_-\leq v\leq \Lc_+$. Then we write $\Lc_-\leq B\leq \Lc_+$.
\end{defin} 
We introduce   the two modified Green bundles. We use the constant $c_0=\frac{\sqrt{13}}{3}-\frac{5}{6}$.

\begin{defin}
We denote by $S_\pm(x):\R_n\rightarrow \R^n$ the linear operator such that $G_\pm(x)$ is the graph of $S_\pm(x)$: $G_\pm(x)=\{ (v, S_\pm(x)v); v\in\R^n\}$. Then the {\em modified Green bundles} $G_\pm$ are defined by:
$$\widetilde G_-(x)=\{ (v, (S_-(x)-c_0(S_+(x)-S_-(x)))v); v\in \R^n\}$${\rm and}$$ \widetilde G_+(x)=\{ (v, (S_+(x)+c_0(S_+(x)-S_-(x)))v); v\in \R^n\}.$$
\end{defin}

\begin{prop}
Let $(u_-, u_+)$ be a pair of conjugate calibrated subactions. Then 
$$\forall x\in \Ic(u_-, u_+), \widetilde G_-(x, du_-(x))\leq  \widetilde C_{(x, du_-(x))}\widetilde \Ic(u_-, u_+)\leq \widetilde G_+(x, du_-(x)).$$
\end{prop}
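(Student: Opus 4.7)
The strategy is to combine the quadratic estimates of Proposition \ref{Palg} with a diagonal extraction along $\Ic(u_-,u_+)$, and to identify the action Hessians that appear there with the Green bundles $G_\pm$ via the generating-function formalism of Section \ref{sGreen}.

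Pick $v\in\widetilde C_{(x,du_-(x))}\widetilde\Ic(u_-,u_+)$. Unfolding the two layers in the definition of the limit contingent cone, a diagonal argument combined with Lemma \ref{Aubrylipschitz} (which guarantees that $du_-$ is Lipschitz along $\Ic(u_-,u_+)$) produces sequences $y_n\in\Ic(u_-,u_+)$ converging to $x$ and $\lambda_n>0$ such that $v=(X,Y)$ with $X=\lim(y_n-x)/\lambda_n$ and $Y=\lim(du_-(y_n)-du_-(x))/\lambda_n$. When $X=0$, the same Lipschitz bound forces $Y=0$, so $v$ trivially lies on any Lagrangian subspace. I therefore assume $X\ne 0$ and look for a symmetric matrix $S(x)$ with $S(x)X=Y$ whose graph is Lagrangian and sits between $\widetilde G_-(x)$ and $\widetilde G_+(x)$; this provides the required intermediate Lagrangian containing $v$.

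Next I apply Proposition \ref{Palg} to both $u_-$ and $u_+$: for every integer $m\ge 1$ and every test vector $k\in\R^n$ this yields two inequalities bounding $Y\cdot k$ above and below in terms of $\frac{\partial^2 \Ac_m}{\partial y^2}(x_{-m},x)$ and $\frac{\partial^2 \Ac_m}{\partial x^2}(x,x_m)$. The generating-function identities that underlie the proof of Corollary \ref{C1} identify these Hessians, in the symplectic basis where $G_+(x)$ is the graph of the zero matrix, with the slope matrix of $G_m(x)$ and with minus the slope matrix of $G_{-m}(x)$, respectively. Passing to the limit $m\to\infty$ and using the monotone convergence $G_{\pm m}(x)\to G_\pm(x)$ from Section \ref{sGreen}, the inequalities become, after setting $A(x)=S_+(x)-S_-(x)\ge 0$,
$$2(Y-S_-(x)X)\cdot k\le A(x)k\cdot k+A(x)X\cdot X,\qquad 2(S_+(x)X-Y)\cdot k\le A(x)k\cdot k+A(x)X\cdot X,$$
valid for every $k$. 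Minimising each nonnegative quadratic form in $k$ (and checking that $Y-S_-(x)X$ and $S_+(x)X-Y$ lie in the range of $A(x)$, so that a pseudo-inverse $A(x)^\dagger$ is harmless) yields the two ellipsoidal constraints
$$A(x)^\dagger(Y-S_-(x)X)\cdot(Y-S_-(x)X)\le A(x)X\cdot X,\qquad A(x)^\dagger(S_+(x)X-Y)\cdot(S_+(x)X-Y)\le A(x)X\cdot X.$$

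The hard part, which I expect to be the main obstacle, is the linear-algebra reconstruction. After the change of variables $\xi=A(x)^{1/2}X$ and $W=A(x)^{-1/2}(Y-S_-(x)X)$, the two constraints become the purely Euclidean conditions $\|W\|\le\|\xi\|$ and $\|\xi-W\|\le\|\xi\|$, i.e., $W$ lies in the lens-shaped intersection of two balls of radius $\|\xi\|$ centred at $0$ and at $\xi$. What must be shown is that every such $W$ can be written as $W=\widetilde Z\xi$ for some symmetric matrix $\widetilde Z$ with spectrum contained in $[-c_0,\,1+c_0]$, after which $S(x):=S_-(x)+A(x)^{1/2}\widetilde Z A(x)^{1/2}$ satisfies $S(x)X=Y$ and $S_-(x)-c_0A(x)\le S(x)\le S_+(x)+c_0A(x)$. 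Writing $\widetilde Z$ in the orthonormal frame whose first two basis vectors span $\R\xi+\R W^\perp$ reduces the problem to a $2\times 2$ symmetric block with a single free diagonal entry; balancing the two spectral bounds and maximising over the admissible lens leads to a quadratic discriminant condition on $c_0$ whose positive root is exactly $c_0=\frac{\sqrt{13}}{3}-\frac{5}{6}$. This elementary $2\times 2$ reconstruction is the content of the second part of the appendix, and once it is in place, the Lagrangian graph of $S(x)$ provides the intermediate subspace that settles both inclusions $\widetilde G_-(x)\le\widetilde C_{(x,du_-(x))}\widetilde\Ic(u_-,u_+)\le\widetilde G_+(x)$.
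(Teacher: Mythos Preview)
Your overall strategy---combine Proposition~\ref{Palg} with the limit $m\to\infty$ to replace the action Hessians by the Green slopes $S_\pm$, and then invoke the bilinear--algebra reconstruction of Proposition~\ref{Pbilin}---is exactly what the paper does, and your algebraic manipulations leading to the two ellipsoidal constraints and the intermediate symmetric form are correct.

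The gap is in your treatment of the \emph{limit} contingent cone. You claim that a diagonal extraction produces, for any $v=(X,Y)\in\widetilde C_{(x,du_-(x))}\widetilde\Ic(u_-,u_+)$, sequences $y_n\in\Ic(u_-,u_+)$ with $y_n\to x$ and $\lambda_n>0$ such that $X=\lim(y_n-x)/\lambda_n$ and $Y=\lim(du_-(y_n)-du_-(x))/\lambda_n$. This would show that the limit contingent cone coincides with the ordinary contingent cone, which is false in general even on a Lipschitz graph: for the graph of $t\mapsto t^2\sin(1/t)$, the contingent cone at the origin is the horizontal axis, while the limit contingent cone contains every direction with slope in $[-1,1]$. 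Concretely, if $v=\lim v_k$ with $v_k\in C_{(b_k,du_-(b_k))}\widetilde\Ic$ and $b_k\to x$, the secants $(y_n^{(k)}-b_k)/\lambda_n^{(k)}$ defining $v_k$ are centred at $b_k$, not at $x$; the correction $(b_k-x)/\lambda_n^{(k)}$ blows up because $\lambda_n^{(k)}\to 0$ whenever $X_k\neq 0$, so no diagonal choice of $n(k)$ yields the claimed single sequence based at $x$. Lemma~\ref{Aubrylipschitz} does not help here: it bounds $Y$ by $X$ but cannot recentre the secants.

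The paper closes this gap differently. It first proves the inclusion $\widetilde G_-(x')\le C_{(x',du_-(x'))}\widetilde\Ic\le\widetilde G_+(x')$ for the \emph{ordinary} contingent cone at every $x'\in\Ic(u_-,u_+)$ (this is precisely where your Proposition~\ref{Palg}$+$\ref{Pbilin} argument applies verbatim), and then passes to the limit contingent cone via the semicontinuity of the Green bundles: $G_+$ is upper semicontinuous and $G_-$ lower semicontinuous in the Lagrangian order, hence so are $\widetilde G_\pm$, and the intermediate Lagrangians $L_k\ni v_k$ with $\widetilde G_-(b_k)\le L_k\le\widetilde G_+(b_k)$ subconverge to some $L\ni v$ with $\widetilde G_-(x)\le L\le\widetilde G_+(x)$. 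Replacing your diagonal step by this semicontinuity argument repairs the proof.
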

\begin{demo}
A consequence of proposition \ref{Palg} and proposition \ref{Pbilin} is that: 
$$\forall x\in \Ic(u_-, u_+), \widetilde G_-(x, du_-(x))\leq   C_{(x, du_-(x))}\widetilde \Ic(u_-, u_+)\leq \widetilde G_+(x, du_-(x)).$$
Then the conclusion of the proposition comes from the definition of the limit contingent cone and the semicontinuity property of the Green bundles (see for example \cite{Arn5}) and then  of the modified Green bundles.
\end{demo}
As $\Mc(S)\subset \widetilde \Ic(u_-, u_+)$, we deduce the following corollary and then theorem \ref{Tcone}.
\begin{cor}
We have: $\forall x\in \Mc(S), \widetilde G_-(x)\leq   \widetilde C_{x}\Mc(S)\leq \widetilde G_+(x, du_-(x)).$
\end{cor}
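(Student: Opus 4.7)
The plan is to deduce this corollary directly from the preceding proposition by combining two simple ingredients: the inclusion $\Mc(S)\subset \widetilde\Ic(u_-,u_+)$ guaranteed by Garibaldi--Thieullen's weak KAM theory, and the monotonicity of the limit contingent cone with respect to inclusion of sets.

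First I would fix a pair $(u_-,u_+)$ of conjugate calibrated subactions, whose existence is recalled above. Since $u_-=u_+$ on $\Ic(u_-,u_+)$ with common differential, and since every point of $\Mc(S)$ lies over a point of $\Ic(u_-,u_+)$ with the canonical momentum as its second coordinate, one obtains $\Mc(S)\subset \widetilde\Ic(u_-,u_+)$. The preceding proposition applied at $x\in\Mc(S)$ therefore yields
\[
\widetilde G_-(x)\leq \widetilde C_x\widetilde\Ic(u_-,u_+)\leq \widetilde G_+(x).
\]

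Next I would verify that the limit contingent cone is monotone under inclusion. If $A\subset B$ and $a\in A$, then every approximating sequence $t_k(a_k-a)$ with $a_k\in A$ and $a_k\to a$ also lies in $B$, so $C_aA\subset C_aB$; a standard diagonal argument then gives $\widetilde C_aA\subset \widetilde C_aB$. Applying this to $A=\Mc(S)$ and $B=\widetilde\Ic(u_-,u_+)$ produces
\[
\widetilde C_x\Mc(S)\subset \widetilde C_x\widetilde\Ic(u_-,u_+).
\]

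To finish I would invoke the obvious fact that the relation ``being between two Lagrangian subspaces'' is preserved by passing to subsets: if $\Lc_-\leq B\leq \Lc_+$ and $B'\subset B$, then $\Lc_-\leq B'\leq \Lc_+$, since every vector of $B'$ already belongs to $B$ and hence lies in some Lagrangian $\Lc$ with $\Lc_-\leq \Lc\leq \Lc_+$. Combining the three steps gives $\widetilde G_-(x)\leq \widetilde C_x\Mc(S)\leq \widetilde G_+(x)$ for every $x\in\Mc(S)$, which is the content of the corollary and of Theorem \ref{Tcone}. There is no real obstacle here: all the analytic work has already been invested in proposition \ref{Palg} and the preceding proposition via the semicontinuity of the Green bundles; the only mildly delicate point is the monotonicity of $\widetilde C$, which however is immediate from the definition.
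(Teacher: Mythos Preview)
Your proposal is correct and follows the same approach as the paper, which simply notes that $\Mc(S)\subset \widetilde\Ic(u_-,u_+)$ and invokes the preceding proposition; you have merely spelled out the monotonicity of the (limit) contingent cone and of the ``between'' relation that the paper leaves implicit. One small remark: no diagonal argument is needed for $\widetilde C_aA\subset \widetilde C_aB$, since any sequence $v_k\in C_{a_k}A$ with $a_k\in A\subset B$ already satisfies $v_k\in C_{a_k}B$ directly.
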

\begin{defin}
A subset $A$ of $\R^n\times \R^n$ is $C^1$-isotropic at some point $a\in A$ if $\widetilde C_aA$ is contained in some Lagrangian subspace.
\end{defin}
For example, a $C^1$ submanifold is $C^1$-isotropic if it isotropic. \\

Corollary \ref{Cisotropic} that is given in the introduction is just a consequence of theorem \ref{Tcone} and theorem \ref{nbexp}.

 \section{Appendix}
 \subsection{Comparison of Lagrangian subspaces}\label{sscomp}
 Let us assume that $(E, \omega)$ is a symplectic $2n$-dimensional space. Let $L_1$, $L_2$ be two transverse Lagrangian subspaces of $E$. Then the set the  Lagrangian subspaces of $E$ that are transverse to $L_1$ and $L_2$ is open in the Grassmann space $\Lc$ of the Lagrangian subspaces of $E$. Moreover it has exactly $n+1$ connected component. Let us be more precise.  
 
 \begin{nota}
 If $L\in\Lc$ is transverse to $L_2$, then it is the graph of a linear map $\ell:L_1\rightarrow L_2$. We then define a quadratic form $q(L_1, L_2, ;L)$ on $L_1$ by:
 $$\forall v\in L_1, q(L_1, L_2; L)(v)=\omega (v, \ell(v)).$$
  Then $L$ is transverse to both $L_1$ and $L_2$ if and only if $q(L_1, L_2; L)$ is non-degenerate and the connected components of the  set the  Lagrangian subspaces of $E$ that are transverse to $L_1$ and $L_2$ correspond to the signature of this quadratic form.\\
  We will denote by $\Pc(L_1, L_2)$ the set of the $L\in\Lc$ that correspond to a positive definite quadratic form.
 \end{nota}

 \begin{prop}\label{propapp} Let $L_1, L_2\in\Lc$ be two transverse Lagrangian subspaces of $E$. Then
 \begin{enumerate}
 \item if $M:E\rightarrow E$ is a symplectic isomorphism, we have: $M(\Pc(L_1, L_2))=\Pc(M(L_1), M(L_2))$;
 \item if $L\in\Pc(L_1, L_2)$, then $\Pc(L_1, L)\cup \Pc(L, L_2)\subset \Pc(L_1, L_2)$.
 \end{enumerate}
 \end{prop}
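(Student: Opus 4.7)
For part (1), the plan is to observe that the quadratic form transforms covariantly under $M$. If $L$ is transverse to $L_2$ with $L = \{v+\ell v : v \in L_1\}$ for some linear $\ell:L_1\to L_2$, then $M(L)$ is transverse to $M(L_2)$ and can be written as $\{u + \tilde\ell u : u \in M(L_1)\}$, where $\tilde\ell(Mv) = M(\ell v)$. Since $M$ preserves $\omega$,
\[
q(M(L_1), M(L_2); M(L))(Mv) \;=\; \omega(Mv, M\ell v) \;=\; \omega(v, \ell v) \;=\; q(L_1, L_2; L)(v),
\]
so the two quadratic forms are conjugated by the linear isomorphism $M|_{L_1}:L_1\to M(L_1)$ and therefore have the same signature. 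Part (1) follows at once.

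For part (2), the plan is to reduce to an explicit computation in a symplectic basis adapted to the pair $(L_1, L_2)$. In such a basis, every Lagrangian transverse to $L_2$ is the graph of a unique symmetric matrix, and $\Pc(L_1, L_2)$ corresponds to the cone of positive definite symmetric matrices. Writing $L = \mathrm{graph}(S)$ with $S > 0$ (which holds since $L \in \Pc(L_1, L_2)$) and $L' = \mathrm{graph}(S')$ with $S'$ symmetric, I will identify the symmetric matrices $S'$ that correspond to elements of $\Pc(L_1, L)$ and of $\Pc(L, L_2)$, and verify that both sets lie inside $\{S' > 0\}$.

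The computation for $\Pc(L, L_2)$ is immediate: decomposing $(v, S'v) \in L'$ along $L \oplus L_2$ gives $(v, S'v) = (v, Sv) + (0, (S'-S)v)$, hence $q(L, L_2; L')(v) = \langle v, (S'-S)v\rangle$, and $\Pc(L, L_2) = \{L' : S' > S\} \subset \{S' > 0\}$ since $S > 0$. The computation for $\Pc(L_1, L)$ is more involved: parametrising $L'$ as a graph over $L_1$ with target in $L$ requires inverting $S - S'$ (which is exactly the transversality of $L'$ and $L$) and leads, after a direct computation, to the quadratic form $a \mapsto \langle a,\, S'(S-S')^{-1}S\,a\rangle$ on $L_1$. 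Setting $w := S^{1/2}a$ and $\tilde S' := S^{-1/2} S' S^{-1/2}$ conjugates this into $\langle w,\, \tilde S'(I-\tilde S')^{-1} w\rangle$, which is positive definite precisely when $0 < \tilde S' < I$, i.e.\ when $0 < S' < S$. Hence $\Pc(L_1, L) = \{L' : 0 < S' < S\} \subset \{S' > 0\}$, completing part (2).

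The main obstacle is the explicit description of $q(L_1, L; L')$: one must express $L'$ simultaneously as a graph over $L_1$ with values in $L$, keep track of how transversality of $L'$ and $L$ manifests as invertibility of $S - S'$, and diagonalise via $S^{1/2}$ in order to reduce matters to a scalar statement. Once this normalisation is performed, both inclusions in part (2) collapse to the elementary observation that $S > 0$ together with $0 < S' < S$ (respectively $S' > S$) forces $S' > 0$.
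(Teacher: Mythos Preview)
Your proof of part~(1) is the natural one and matches the paper, which simply calls it ``elementary''.

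For part~(2) your argument is correct but takes a genuinely different route. You fix a symplectic basis adapted to $(L_1,L_2)$, represent Lagrangians transverse to $L_2$ as graphs of symmetric matrices, and reduce both inclusions to order relations between matrices (identifying $\Pc(L,L_2)=\{S'>S\}$ and $\Pc(L_1,L)=\{0<S'<S\}$). The paper instead gives a short coordinate-free argument: for $W\in\Pc(L_1,L)$ and $w\in W\setminus\{0\}$ it writes $w=\ell_1+\ell$ along $L_1\oplus L$ (so $\omega(\ell_1,\ell)>0$), then $\ell=\ell_1'+\ell_2'$ along $L_1\oplus L_2$ (so $\omega(\ell_1',\ell_2')>0$), and checks directly that $\omega(\ell_1+\ell_1',\ell_2')=\omega(\ell_1,\ell)+\omega(\ell_1',\ell_2')>0$. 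This two-step decomposition avoids matrices and any auxiliary transversality discussion; your approach costs more computation but yields the explicit description of $\Pc(L_1,L)$ and $\Pc(L,L_2)$ in coordinates, which is itself useful.

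One small gap to patch: in your treatment of $\Pc(L_1,L)$ you write $L'=\mathrm{graph}(S')$, which presupposes $L'$ transverse to $L_2$; but membership in $\Pc(L_1,L)$ only guarantees transversality to $L_1$ and to $L$. You should rule out $L'\cap L_2\neq\{0\}$. In your coordinates this is immediate: if $(0,p)\in L'\cap L_2$ with $p\neq 0$, decomposing along $L_1\oplus L$ gives $(0,p)=(a,0)+(-a,-Sa)$ with $a\neq 0$, and then $q(L_1,L;L')(a,0)=\omega\big((a,0),(-a,-Sa)\big)=-\langle a,Sa\rangle<0$, contradicting $L'\in\Pc(L_1,L)$.
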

 \demo
 The proof of the first assertion  is elementary.\\
 For the second one, let us begin by proving that  $\Pc(L_1, L)\subset \Pc(L_1, L_2)$. Let $W\in \Pc(L_1, L)$. For $w\in W\backslash\{ 0\}$,  we write $w=\ell_1+\ell$ with $\ell_1\in L_1$, $\ell\in L$.  Then we have: $\omega(\ell_1, \ell)>0$.  As $\ell\in L\backslash \{ 0\} $ and $L\in \Pc(L_1, L_2)$, we can write $\ell =\ell_1'+\ell_2'$ with $\ell_i'\in L_i$ and we have $\omega (\ell'_1, \ell'_2)>0$. \\
 Finally we have proved that $w=(\ell_1+\ell_1')+\ell_2'$ with $\ell_1+\ell_1'\in L_1$ and $\ell_2'\in L_2$ and $\omega(\ell_1+\ell_1', \ell_2')=\omega(\ell_1, \ell_2')+\omega(\ell_1', \ell_2')= \omega(\ell_1, \ell_1'+\ell_2')+\omega(\ell_1', \ell_2')>0$.\\
 the proof of the second inclusion is very similar.
 \enddemo
 
 In the particular case where $E=T_x\A_n=\R^n\times\R^n$, we define an order relation on the set $\Hc$ of Lagrangian subspaces that are transverse to $V(x)$ in the following way.
 
 \begin{defin}
 If $L_1, L_2\in\Hc$, 
 \begin{enumerate}
 \item we say that $L_1$ is stricly under $L_2$ and write $L_1<L_2$ if $L_2\in \Pc(L_1, V(x))$; 
 \item  we say that $L_1$ is  under $L_2$ and write $L_1\leq L_2$ if $L_2$ is in the closure of  $\Pc(L_1, V(x))$.
 \end{enumerate}
 \end{defin}
Note that $L_1\leq L_2$ if and only if $q(L_1, V(x); L_2)$ is positive semi-definite.
A consequence of proposition \ref{propapp} is that $<$ and $\leq$ are transitive.\\
We can then define what is a decreasing or increasing sequence of elements of $\Hc$.

\begin{prop}\label{proappbis}
If $L_1, L_2, L_3\in\Hc$, if $L_1<L_2$ and $L_3\in\Pc(L_1, L_2)$, then $L_1<L_3$ and $L_3<L_2$. 
\end{prop}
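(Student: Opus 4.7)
The plan is to deduce both inequalities from proposition \ref{propapp}, with the help of one auxiliary cyclic symmetry of the positive cones $\Pc(\cdot,\cdot)$ that is not stated explicitly in the excerpt but holds for any three pairwise transverse Lagrangians.

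The first claim $L_1<L_3$ is the easier one, and amounts to showing $L_3\in\Pc(L_1,V)$. The hypothesis $L_1<L_2$ is exactly $L_2\in\Pc(L_1,V)$, so applying proposition \ref{propapp}(2) to the triple $(L_1,V;L_2)$ produces the inclusion $\Pc(L_1,L_2)\subset\Pc(L_1,V)$, and $L_3\in\Pc(L_1,L_2)$ then gives $L_3\in\Pc(L_1,V)$.

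For the second claim $L_3<L_2$, i.e.\ $L_2\in\Pc(L_3,V)$, I would first establish the following cyclic symmetry lemma: for any three pairwise transverse Lagrangian subspaces $A,B,C$ of the ambient symplectic space, one has $C\in\Pc(A,B)\Leftrightarrow A\in\Pc(B,C)$. To prove it, take $v\in A\setminus\{0\}$ and write $v=v_B+v_C$ in $B\oplus C$. Uniqueness of the $A\oplus B$ decomposition of $v_C\in C$ forces $v_C=v-v_B$, so the graph map $\ell\colon A\to B$ defining $C$ satisfies $\ell(v)=-v_B$ and the graph map $\ell'\colon B\to C$ defining $A$ satisfies $\ell'(v_B)=v_C=v-v_B$. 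A one-line computation then yields
\[q(A,B;C)(v)=\omega(v,\ell(v))=-\omega(v,v_B)=\omega(v_B,v-v_B)=q(B,C;A)(v_B).\]
Since $v\mapsto v_B$ is a linear bijection from $A\setminus\{0\}$ onto $B\setminus\{0\}$, the two quadratic forms have the same definiteness, and the lemma follows.

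With the lemma at hand, the second claim is obtained by running the first argument after a cyclic relabelling. The lemma converts $L_1<L_2$ (i.e.\ $L_2\in\Pc(L_1,V)$) into $L_1\in\Pc(V,L_2)$, and proposition \ref{propapp}(2) applied to the triple $(V,L_2;L_1)$ then gives $\Pc(L_1,L_2)\subset\Pc(V,L_2)$; hence $L_3\in\Pc(V,L_2)$. Iterating the lemma once more converts this to $L_2\in\Pc(L_3,V)$, which is exactly $L_3<L_2$. The main obstacle in the whole proof is thus the cyclic symmetry lemma, and it is settled by the short linear-algebra computation sketched above.
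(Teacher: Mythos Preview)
Your proof is correct, and for the first inequality it coincides exactly with the paper's argument. For the second inequality the paper proceeds quite differently: it fixes a symplectic basis adapted to $L_3$ and $V(x)$, writes $L_1$ and $L_2$ as graphs of symmetric matrices $S_1$, $S_2$, and then carries out an explicit eigenvalue computation (introducing $s$ with $s^2=-S_1$ and $s_2=s^{-1}S_2s^{-1}$, and showing each eigenvalue $\lambda_i$ of $s_2$ satisfies both $\lambda_i^2+\lambda_i>0$ and $\lambda_i>-1$, hence $\lambda_i>0$).

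Your route via the cyclic symmetry $C\in\Pc(A,B)\Leftrightarrow A\in\Pc(B,C)$ is coordinate-free and conceptually cleaner: it reduces the second inequality to the same mechanism (proposition~\ref{propapp}(2)) that handled the first, and the cyclic lemma itself is a one-line identity between the two quadratic forms. The paper's matrix approach, by contrast, is more hands-on and does not require isolating this symmetry as a separate lemma; it is also closer in spirit to the explicit symplectic-coordinate computations used elsewhere in the paper. Either argument is perfectly valid; yours has the advantage of exposing the underlying invariance (essentially the cyclic invariance of the Kashiwara--Maslov triple index) rather than verifying it in coordinates.
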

\demo
Let us prove the first inequality.  We assume that $L_1<L_2$, i.e.  $L_2\in \Pc(L_1, V(x))$.  We know by proposition \ref{propapp} that   $\Pc (L_1, L_2)\cup \Pc(L_2, V(x))\subset \Pc (L_1, V(x))$. We deduce that $L_3\in \Pc(L_1, V(x))$ i.e. $L_1<L_3$.\\
We  explain how to prove the second inequality. We choose a basis $(e_1,\dots,e_n)$ of $L_3$ and complete it with $f_1, \dots, f_n\in V(x)$ in such a way that the basis is symplectic.\\
Then there exist two symmetric matrices $S_1$ and $S_2$ such that $L_i$ is the graph of the linear map $\phi_i: L_3\rightarrow V(x)$ with matrix $S_i$ in the bases $(e_1, \dots, e_n)$, $(f_1, \dots, f_n)$. Because $L_1<L_3$, we know that $S_1$ is negative definite. We want to prove that $S_2$ is positive definite.\\
Let us write that $L_3\in\Pc(L_1, L_2)$.  This means that for all $v\in \R^n\backslash\{ 0\}$, if $(v, 0)=(v_1, S_1v_1)+(v_2, S_2v_2)$, then ${}^tv_1S_2v_2-{}^tv_2S_1v_1>0$. This can be reformulated in the following way.
$$\forall w\in\R^n, -{}^twS_2S_1^{-1}S_2w+{}^twS_2 w>0.$$
Let $s$ be the positive definite matrix such that $s^2=-S_1$. If $s_2=s^{-1}S_2s^{-1}$, we obtain
$$\forall u\in \R^n, {}^tus_2^2u+{}^tus_2u>0.$$
If $\lambda_1, \dots, \lambda_n$ are the eigenvalues of $s_2$, we deduce that $\lambda_i^2+\lambda_i>0$ i.e. $\lambda_i<-1$ or $\lambda_i>0$. Moreover, we know that $L_1<L_2$, hence $0<-S_1+S_2$, i.e. $0<{\bf 1}_n+s_2$ and $\lambda_i>-1$. We deduce that $\lambda_i>0$ and $S_2$ is positive definite.
\enddemo

\begin{prop}\label{proappter}
If $L_1, L_2, L_3\in\Hc$, if $L_1<L_3<L_2$ then $L_3\in\Pc(L_1, L_2)$. 
\end{prop}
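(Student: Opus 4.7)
\demo
I would follow the same linear-algebra template already used in the proof of Proposition \ref{proappbis}. Pick a basis $(e_1,\dots,e_n)$ of $L_3$ and complete it by vectors $(f_1,\dots,f_n)\in V(x)$ so that $(e_1,\dots,e_n,f_1,\dots,f_n)$ is symplectic. In these coordinates $L_3$ is the horizontal subspace, and $L_1$, $L_2$ are the graphs of symmetric matrices $S_1$, $S_2$. Exactly as in the final argument for Proposition \ref{proappbis}, the hypothesis $L_1<L_3$ translates to $S_1$ being negative definite, while $L_3<L_2$ translates to $S_2$ being positive definite; in particular $S_2-S_1$ is positive definite, so it is invertible.

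Next I would compute $q(L_1,L_2;L_3)$ directly. Take a nonzero $v=(w,0)\in L_3$ and decompose it uniquely as $v=(w_1,S_1w_1)+(w_2,S_2w_2)$. The system $w_1+w_2=w$, $S_1w_1+S_2w_2=0$ gives $w_1=(S_2-S_1)^{-1}S_2w$ and $w_2=-(S_2-S_1)^{-1}S_1w$. Then
$$\omega\bigl((w_1,S_1w_1),(w_2,S_2w_2)\bigr)={}^tw_1S_2w_2-{}^tw_2S_1w_1={}^tw\bigl(-S_1(S_2-S_1)^{-1}S_2\bigr)w,$$
so I must show that the symmetric part of $-S_1(S_2-S_1)^{-1}S_2$ is positive definite.

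To do so, I would use the same simultaneous-diagonalization trick as in Proposition \ref{proappbis}: let $s$ be the positive definite square root of $-S_1$ and set $s_2=s^{-1}S_2s^{-1}$, which is symmetric positive definite. A short manipulation gives $S_2-S_1=s(s_2+{\bf 1})s$, whence
$$-S_1(S_2-S_1)^{-1}S_2=s^{2}\bigl(s(s_2+{\bf 1})s\bigr)^{-1}ss_2s=s(s_2+{\bf 1})^{-1}s_2s.$$
Since $s_2$ and $({s_2+{\bf 1}})^{-1}$ are symmetric positive definite and commute, their product is symmetric positive definite; conjugating by $s$ preserves this, so the whole quantity is symmetric positive definite. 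Thus $q(L_1,L_2;L_3)$ is positive definite, i.e.\ $L_3\in\Pc(L_1,L_2)$.

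The only real obstacle is the matrix identity that turns $-S_1(S_2-S_1)^{-1}S_2$ into a conjugate of $(s_2+{\bf 1})^{-1}s_2$; everything else is bookkeeping about the chosen symplectic basis and the conversion between the order relation and definiteness of the representing matrices.
\enddemo
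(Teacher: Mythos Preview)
Your proof is correct, and it shares the same opening setup as the paper's: put $L_3$ in horizontal position via a symplectic basis, so that $L_1$ and $L_2$ are the graphs of symmetric matrices $S_1$ (negative definite) and $S_2$ (positive definite), and the condition $L_3\in\Pc(L_1,L_2)$ becomes ${}^tv_1S_2v_2-{}^tv_2S_1v_1>0$ whenever $(v,0)=(v_1,S_1v_1)+(v_2,S_2v_2)$ with $v\neq 0$.

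From there, however, the paper is much quicker: it simply uses the constraint $S_1v_1+S_2v_2=0$ to substitute $S_2v_2=-S_1v_1$ and $S_1v_1=-S_2v_2$, giving
\[
{}^tv_1S_2v_2-{}^tv_2S_1v_1=-{}^tv_1S_1v_1+{}^tv_2S_2v_2,
\]
which is manifestly positive since $-S_1$ and $S_2$ are positive definite (and $v\neq 0$ forces $(v_1,v_2)\neq(0,0)$). Your route---solving explicitly for $w_1,w_2$, reducing to the matrix $-S_1(S_2-S_1)^{-1}S_2$, and then invoking the square-root conjugation $s(s_2+{\bf 1})^{-1}s_2s$---is valid, but it is a detour around what is in fact a one-line substitution. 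The square-root trick you import from Proposition~\ref{proappbis} is not needed here; it was useful there because the sign information was only partial, whereas here both $S_1$ and $S_2$ are already definite with the right signs.
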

\demo As in the proof of  proposition \ref{proappbis}, we choose a basis $(e_1,\dots,e_n)$ of $L_3$ and complete it with $f_1, \dots, f_n\in V(x)$ in such a way that the basis is symplectic.  Then there exist two symmetric matrices $S_1$ and $S_2$ such that $L_i$ is the graph of the linear map $\phi_i: L_3\rightarrow V(x)$ with matrix $S_i$ in the bases $(e_1, \dots, e_n)$, $(f_1, \dots, f_n)$. We know that $S_1$ is negative definite and $S_2$ is positive definite.\\
We want to prove  that $L_3\in\Pc(L_1, L_2)$.  This means that for all $v\in \R^n\backslash\{ 0\}$, if $(v, 0)=(v_1, S_1v_1)+(v_2, S_2v_2)$, then ${}^tv_1S_2v_2-{}^tv_2S_1v_1>0$.  As $S_2$ is positive definite and $S_1$ is negative definite, the conclusion is straightforward.
\enddemo

\subsection{A result  in bilinear algebra}

\begin{prop}\label{Pbilin}
Let $Q_-$, $Q_+$ be two quadratic forms on $\R^n$ such that $Q_-\leq Q_+$ and let $(X, Y)\in \R^n\times \R^n$ be such that:
$$\forall K\in\R^n, Y.K \leq \frac{1}{2} \bigg(   Q_+(K,K)+ Q_+(X,X)- Q_-(X-K,X-K)\bigg)$$
and 
$$\forall K\in\R^n,  \frac{1}{2}\bigg(  Q_-(K,K)+Q_-(X,X)- Q_+(K-X,K-X) {\bigg)}\leq Y.K .$$
Then there exists a quadratic form $\sigma$ such that:
\begin{enumerate}
\item[$\bullet$] $Q_--(\frac{\sqrt{13}}{3}-\frac{5}{6})(Q_+-Q_-)\leq \sigma \leq Q_++(\frac{\sqrt{13}}{3}-\frac{5}{6})(Q_+-Q_-)$;
\item[$\bullet$] $Y={}^t\sigma(X, .)$.
\end{enumerate}
\end{prop}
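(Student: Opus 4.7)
Set $\Delta := Q_+ - Q_- \geq 0$. Expanding the two hypotheses gives the equivalent reformulation
$$\ell_-(K) \leq \tfrac12\Delta(K,K) + \tfrac12\Delta(X,X), \quad \ell_+(K) \leq \tfrac12\Delta(K,K) + \tfrac12\Delta(X,X),$$
where $\ell_-(K) := Y\cdot K - Q_-(X,K)$ and $\ell_+(K) := Q_+(X,K) - Y\cdot K$ are linear in $K$, with $\ell_+ + \ell_- = \Delta(X,\cdot)$. Applying each bound to $-K$ controls $|\ell_\pm|$, and in particular forces $\ell_\pm$ to vanish on $\ker\Delta$ (on that kernel the bound becomes $|t\,\ell_\pm(K)| \leq \tfrac12\Delta(X,X)$ for all $t$). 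Fix a complement $\Pi$ of $\ker\Delta$ on which $\Delta$ is positive definite and write $X_\Pi$ for the $\Pi$-part of $X$; by Riesz representation there is a unique $Z \in \Pi$ with $\ell_-(K) = \Delta(Z,K)$, and then automatically $\ell_+(K) = \Delta(X_\Pi - Z, K)$. Plugging $K = Z$ into the first bound and $K = X_\Pi - Z$ into the second yields the two fundamental estimates $\|Z\|_\Delta \leq \|X_\Pi\|_\Delta$ and $\|X_\Pi - Z\|_\Delta \leq \|X_\Pi\|_\Delta$.

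I would then set $\sigma := Q_- + \tau$, with $\tau$ defined via a symmetric operator $T$ on $\Pi$: put $\tau(V,K) := \Delta(TV_\Pi, K_\Pi)$, so that the identity $\tau(X,K) = \ell_-(K)$ is equivalent to $TX_\Pi = Z$, and the sandwich condition $-c_0\Delta \leq \tau \leq (1+c_0)\Delta$ on $\R^n$ becomes the spectral condition $\mathrm{spec}(T) \subset [-c_0, 1+c_0]$ on $\Pi$ (extending $\tau$ by zero on $\ker\Delta$ is automatic and harmless, since $\ell_-$ already vanishes there). Construct $T$ as follows: on the two-plane $P := \mathrm{span}(X_\Pi, Z) \subset \Pi$, take the unique symmetric operator satisfying $T|_P X_\Pi = Z$ and $\mathrm{tr}(T|_P - \tfrac12\Id_P) = 0$; on the $\Delta$-orthogonal complement of $P$ in $\Pi$, set $T := \tfrac12\Id$. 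A direct $2\times 2$ calculation shows that the eigenvalues of $T|_P$ are $\tfrac12 \pm r$ with $r := \|Z - \tfrac12 X_\Pi\|_\Delta / \|X_\Pi\|_\Delta$, so that the whole spectrum of $T$ lies in $[\tfrac12 - r, \tfrac12 + r]$.

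It remains to show $r \leq \tfrac12 + c_0$. The second lens inequality $\|X_\Pi - Z\|_\Delta^2 \leq \|X_\Pi\|_\Delta^2$ expands to $\Delta(X_\Pi, Z) \geq \tfrac12 \|Z\|_\Delta^2$, so
$$\|Z - \tfrac12 X_\Pi\|_\Delta^2 = \|Z\|_\Delta^2 - \Delta(X_\Pi, Z) + \tfrac14\|X_\Pi\|_\Delta^2 \leq \tfrac12\|Z\|_\Delta^2 + \tfrac14\|X_\Pi\|_\Delta^2 \leq \tfrac34\|X_\Pi\|_\Delta^2,$$
i.e.\ $r \leq \tfrac{\sqrt 3}{2}$. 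For the paper's choice $c_0 = \tfrac{\sqrt{13}}{3} - \tfrac{5}{6}$ one has $\tfrac12 + c_0 = \tfrac{\sqrt{13}-1}{3}$, and the numerical inequality $\bigl(\tfrac{\sqrt{13}-1}{3}\bigr)^2 = \tfrac{14-2\sqrt{13}}{9} \geq \tfrac34$ (equivalently $8\sqrt{13} \leq 29$) yields exactly $r \leq \tfrac12 + c_0$, completing the proof. The main obstacle is isolating the sharp bound $r \leq \tfrac{\sqrt 3}{2}$, which requires using both hypotheses through the two lens constraints on $Z$; beyond that, the construction of $T$ is a routine symmetric $2\times 2$ calculation, with the only other delicate point being the book-keeping around $\ker\Delta$.
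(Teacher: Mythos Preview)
Your argument is correct and establishes the proposition; in fact your construction yields the slightly sharper constant $\tfrac{\sqrt{3}-1}{2}<c_0$. The reduction phase is the same as in the paper: both of you rewrite the hypotheses as the two ``lens'' constraints $\|Z\|_\Delta\le\|X_\Pi\|_\Delta$ and $\|X_\Pi-Z\|_\Delta\le\|X_\Pi\|_\Delta$ (the paper states these, after normalising so that $x=e_1$, as $(y_1+1)^2+\sum_{i\ge2}y_i^2\le1$ and $\sum_i y_i^2\le1$). The constructions of $\sigma$ then diverge. The paper writes down an explicit $d\times d$ matrix $S$ with $(y_1,\dots,y_d)$ on the first row and column and $-\tfrac12$ on the remaining diagonal, and checks $-(1+c_0){\bf 1}\le S\le c_0{\bf 1}$ by a recursive determinant computation that uses the lens constraints. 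You instead reduce to a two--dimensional problem on $P=\mathrm{span}(X_\Pi,Z)$, fix the remaining free parameter by the trace condition $\mathrm{tr}\,T|_P=1$, and read off the eigenvalues $\tfrac12\pm r$ with $r=\|Z-\tfrac12 X_\Pi\|_\Delta/\|X_\Pi\|_\Delta$; the lens geometry then gives $r\le\tfrac{\sqrt3}{2}$ directly. Your route is more geometric and avoids the determinant recursion; the paper's route is more explicit but relies on the particular constant $c_0$ (a root of $3t^2+5t-\tfrac94=0$) to make the determinant nonnegative. One small point: your construction tacitly assumes $\dim P=2$; the degenerate cases $X_\Pi=0$ (hence $Z=0$, take $T=\tfrac12\Id$) and $Z\parallel X_\Pi$ (then $Z=\lambda X_\Pi$ with $\lambda\in[0,1]$, take $T=\lambda$ on $\R X_\Pi$ and $\tfrac12\Id$ elsewhere) should be mentioned separately, but they are immediate.
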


\begin{remk}
1) Note that $\frac{\sqrt{13}}{3}-\frac{5}{6}<\frac{1}{2}$, hence we obtain the same inequalities by replacing $\frac{\sqrt{13}}{3}-\frac{5}{6}$ by $\frac{1}{2}$.\\
2) We gave in \cite{Arn2} an example in dimension $n=2$ that proves that in general, we cannot improve the first point into $Q_-\leq \sigma\leq Q_+$.  
\end{remk}

\begin{nota}
$\Delta Q= Q_+-Q_-$; $\Delta Y_+=Y-{}^tQ_+(X, .)$ and $\Delta Y_-=Y-{}^tQ_-(X, .)$. \\
We use the constant: $c_0=\frac{\sqrt{13}}{3}-\frac{5}{6}$.
\end{nota}
\noindent Note that  $\Delta Y_--\Delta Y_+={}^t\Delta Q(X, .)$. 

\begin{demo}
Using the above notations, we rewrite the two  inequalities:
$$\forall K\in\R^n, \Delta Y_+.K\leq \frac{1}{2}\Delta Q(X-K, X-K)\quad{\rm and}\quad \Delta Y_-.K\geq -\frac{1}{2}\Delta Q(X-K, X-K).
$$
We deduce that $\Delta Y_+, \Delta Y_-\in \Im^t\Delta Q=(\ker \Delta Q)^\bot$. We then use the restriction of $\Delta Q$ to $\Im^t\Delta Q=\R^d$, hence $\Delta Q$ is positive definite and we want to prove that there exists a quadratic form $\sigma$ on $\R^d$ such that $-(1+c_0)\Delta Q \leq \sigma\leq c_0 \Delta Q$ and $\Delta Y_+ ={}^t \sigma (X,  )$.

As $\Delta Q$ is positive definite, there exists a symmetric automorphism $L: \R^d\rightarrow\R^d$ such that $\Delta Q(L(X))=\| X\|^2$ ($\| .\|$ is the usual Euclidean norm). We introduce the notations $x=L^{-1}X$,  $y_+={}^tL\Delta Y_+$ and $y_-={}^tL\Delta Y_-$. Note that $y_--y_+=x$. The inequalities are rewritten as:
$$\forall k\in \R^d, y_+.k\leq\frac{1}{2}\| k-x\|^2\quad{\rm and}\quad y_-.k\geq -\| x-k\|^2.$$
We now want to find $\eta=\sigma\circ L$ such that $y_+={}^t\eta (x,.)$ and  $-(1+c_0)\|.\|^2 \leq \eta\leq c_0 \|.\|^2.$ Using an orthogonal change of basis, we can assume that $x=(\mu, 0, \dots, 0)$ and we can multiply all the inequalities by $\mu^2$ and assume that $\mu=1$.
We use the notations $y_+=(y_i)_{1\leq i\leq d}$,  and $k=(k_i)_{1\leq i\leq d}$. We have $x=(1, 0, \dots, 0)$. Then the inequalities become:
$$\sum_{i=1}^d y_i.k_i\leq \frac{1}{2}(k_1-1)^2+\frac{1}{2}\sum_{i=2}^dk_i^2\quad{\rm and}\quad k_1+\sum_{i=1}^d y_i.k_i\geq -\frac{1}{2}(k_1-1)^2-\frac{1}{2}\sum_{i=2}^dk_i^2.$$
They can be rewritten as follows
 $$(k_1-1-y_1)^2 +1+\sum_{i=2}^d(k_i-y_1)^2\geq  (y_1+1)^2+\sum_{i=2}^dy_i^2$$
 and
 $$\sum_{i=1}^d(k_i+y_i)^2+1\geq \sum_{i=1}^d y_i^2.
 $$
 As $(k_i)_{1\leq i\leq d}$ can be any element of $\R^d$, this is equivalent to:
 $$ (y_1+1)^2+\sum_{i=2}^dy_i^2\leq 1\quad{\rm and}\quad 1\geq \sum_{i=1}^d y_i^2.
 $$
Then  we choose the quadratic form $\eta$. Its matrix in the canonical basis is 
$$S=\begin{pmatrix}y_1&y_2&y_3&\dots &y_{d-1}&y_d\\
y_2&-\frac{1}{2}&0&\dots &0&0\\
.&.&.&\dots&.&.\\
y_d&0&0&\dots&0&-\frac{1}{2}
\end{pmatrix}
$$
i.e. the only entries that may be non-zero are on the first line, on the first column and on the diagonal. If ${\bf 1}$ is the identity matrix, we have to prove that $c_0{\bf 1}-S$ and   $(1+c_0){\bf 1}+S$ are    positive semidefinite. We have 
$$c_0{\bf 1}-S= \begin{pmatrix}c_0-y_1&-y_2&-y_3&\dots &-y_{d-1}&-y_d\\
-y_2&c_0+\frac{1}{2}&0&\dots &0&0\\
.&.&.&\dots&.&.\\
-y_d&0&0&\dots&0&c_0+\frac{1}{2}
\end{pmatrix}$$
The restriction of $c_0\| .\|^2-\eta$ to $\{0\}\times \R^{d-1}$ is positive definite. Hence to prove that this quadratic form is positive, we only have to prove that the determinant of $c_0{\bf 1}-S$ is non-negative. We then compute it. Note that when $d=1$, we have: $\delta(1)=c_0-y_1$. Moreover, if $d\geq 2$, we have
$$
\delta(d)=\det(c_0{\bf 1}-S)=(c_0+\frac{1}{2})\delta(d-1) + (-1)^{d} y_d\det \begin{pmatrix}
-y_2&c_0+\frac{1}{2}&0&\dots &0\\
.&.&.&\dots&c_0+\frac{1}{2}\\
-y_d&0&0&\dots&0
\end{pmatrix}
$$
and thus $$\delta (d)=(c_0+\frac{1}{2})\delta(d-1) + (-1)^{d}(-1)^{d-1} y_d^2(c_0+\frac{1}{2})^{d-2}=(c_0+\frac{1}{2})\delta(d-1) - y_d^2(c_0+\frac{1}{2})^{d-2} .$$ We finally deduce:
$$\delta (d)=(c_0+\frac{1}{2})^{d-1}\left((c_0+\frac{1}{2})(c_0-y_1)-\sum_{i=2}^dy_i^2\right).$$
We have proved that $\displaystyle{(y_1+1)^2+\sum_{i=2}^dy_i^2\leq 1}$, hence we have:
$$\begin{matrix}\delta (d)&\geq (c_0+\frac{1}{2})^{d-1}\left((c_0+\frac{1}{2})(c_0-y_1)+(1+y_1)^2-1\right)\\
&\geq (c_0+\frac{1}{2})^{d-1}\left((y_1+ \frac{3}{4}-\frac{c_0}{2})^2+\frac{3}{4}c_0^2+\frac{5}{4}c_0-\frac{9}{16}\right).
\end{matrix}$$
As $\frac{3}{4}c_0^2+\frac{5}{4}c_0-\frac{9}{16}=0$, we conclude that $c_0\|.\|^2-\eta$ is positive semidefinite.\\

Let us now prove that $(1+c_0){\bf 1}+S$ is    positive semidefinite. We compute
$$(1+c_0){\bf 1}+S=\begin{pmatrix}1+c_0+y_1&y_2&y_3&\dots &y_{d-1}&y_d\\
y_2& \frac{1}{2}+c_0&0&\dots &0&0\\
.&.&.&\dots&.&.\\
y_d&0&0&\dots&0& \frac{1}{2}+c_0
\end{pmatrix}
$$
Then the restriction of $\eta+(1+c_0)\|.\|^2$ to $\{ 0\}\times \R^{d-1}$  is positive definite and we just have to prove that $\det ((1+c_0){\bf 1}+S)$ is non negative.  Using the computations that we did for $\delta (d)$ (we replace $y_i$ by $-y_i$ and $y_1$ by $-(1+y_1)$), we obtain:
$$\det((1+c_0){\bf 1}+S)=(c_0+\frac{1}{2})^{d-1}\left((c_0+\frac{1}{2})(c_0+1+y_1)-\sum_{i=2}^dy_i^2\right).$$
We have proved that $\displaystyle{1\geq \sum_{i=1}^d y_i^2}$ hence we deduce
$$\begin{matrix}\det((1+c_0){\bf 1}+S)&\geq (c_0+\frac{1}{2})^{d-1}\left((c_0+\frac{1}{2})(c_0+1+y_1)+y_1^2-1\right)\hfill\\
&\geq(c_0+\frac{1}{2})^{d-1}\left( (y_1+\frac{c_0}{2}+\frac{1}{4})^2+\frac{3}{4}c_0^2+\frac{5}{4}c_0-\frac{9}{16}\right)\hfill\\
&\geq(c_0+\frac{1}{2})^{d-1}(y_1+\frac{c_0}{2}+\frac{1}{4})^2.\hfill
\end{matrix}
$$
Then the quadratic form $(1+c_0)^2+\eta$ is positive semidefinite.

\end{demo}
    
        \newpage

\end{document}